\tikzset{
    punkt/.style={
           rectangle,
           draw=white, very thick,
           text width=10.0em,
           minimum height=1.5em,
           text centered}
}
\newcommand\norm[1]{\Vert#1\Vert}
\newcommand\abs[1]{\left\vert#1\right\vert}
\newcommand\N{\mathbb{N}}
\newcommand\R{\mathbb{R}}
\newcommand{\dist}{\operatorname{dist}}
\newcommand{\dom}{\operatorname{dom}}
\newcommand{\gph}{\operatorname{gph}}
\newcommand{\cl}{\operatorname{cl}}
\DeclareMathOperator*{\argmin}{\operatorname{argmin}}
\newcommand{\tto}{\rightrightarrows}
\newtheorem{theorem}{Theorem}[section]
\newtheorem{lemma}[theorem]{Lemma}
\newtheorem{proposition}[theorem]{Proposition}
\newtheorem{assumption}[theorem]{Assumption}
\newtheorem{corollary}[theorem]{Corollary}
\newtheorem{remark}[theorem]{Remark}
\newtheorem{definition}[theorem]{Definition}
\newtheorem{example}[theorem]{Example}
\crefname{figure}{Figure}{Figures}
\begin{document}

\title{
	R-regularity of set-valued mappings under the relaxed constant positive
	linear dependence constraint qualification with applications to
	parametric and bilevel optimization
	}
\author{%
	Patrick Mehlitz%
	\footnote{%
		Brandenburgische Technische Universit\"at Cottbus--Senftenberg,
		Institute of Mathematics,
		03046 Cottbus,
		Germany,
		\email{mehlitz@b-tu.de},
		\url{https://www.b-tu.de/fg-optimale-steuerung/team/dr-patrick-mehlitz},
		ORCID: 0000-0002-9355-850X%
		}
	\and
	Leonid I.\ Minchenko%
	\footnote{%
		Belarus State University of Informatics and Radioelectronics,
		6 P.\ Brovki Street,
		Minsk 220013,
		Belarus,
		\email{leonidm@insoftgroup.com},
		ORCID: 0000-0002-8773-2559
		}
	}

\publishers{}
\maketitle

\begin{abstract}
	The presence of Lipschitzian properties for solution mappings
	associated with nonlinear parametric optimization problems is 
	desirable in the context of stability analysis or bilevel optimization.
	An example of such a Lipschitzian property for set-valued mappings,
	whose graph is the solution set of a system of nonlinear inequalities and
	equations, is R-regularity.	
	Based on the so-called relaxed constant positive linear dependence constraint qualification,
	we provide a criterion ensuring the presence of the R-regularity property.
	In this regard, our analysis generalizes earlier results of that type which exploited
	the stronger Mangasarian--Fromovitz or constant rank constraint qualification.
	Afterwards, we apply our findings in order to derive new sufficient conditions which guarantee
	the presence of R-regularity for
	solution mappings in parametric optimization. Finally, our results are used to derive
	an existence criterion for solutions in pessimistic bilevel optimization and a
	sufficient condition for the presence of the so-called partial calmness property 
	in optimistic bilevel optimization.
\end{abstract}

\begin{keywords}	
	Bilevel optimization, Parametric optimization, Partial calmness, RCPLD, R-regularity
\end{keywords}

\begin{msc}	
	49J53, 90C30, 90C31
\end{msc}

\section{Introduction}\label{sec:introduction}

Lipschitzian properties of implicitly given set-valued mappings are of essential importance
in order to study the stability of optimization problems,
see e.g.\ \cite{GfrererOutrata2016,LudererMinchenkoSatsura2002,Mordukhovich2006}
and the references therein. Particularly, such stability is desirable in the
context of bilevel optimization where a function has to be minimized over the
graph of a solution mapping associated with a given parametric optimization
problem, see \cite{Bard1998,Dempe2002,DempeKalashnikovPerezValdesKalashnykova2015}
or \cref{sec:bilevel_optimization} for details. Indeed, in order to infer
existence results, optimality conditions, or solution algorithms in bilevel
programming, one generally
has to assume the presence of certain properties of this solution map.
However, it is often not easy to verify such properties. In this paper,
we focus on the derivation of sufficient criteria for the presence of so-called
\emph{R-regularity} of set-valued mappings, see \cref{def:R_regularity}. 
This property, in turn, is beneficial in order to study Lipschitzian properties
of marginal (or optimal value) functions and solution mappings in parametric optimization,
see \cite{BednarczukMinchenkoRutkowski2019,LudererMinchenkoSatsura2002,MinchenkoStakhovski2011b},
and these features possess some extensions to bilevel optimization as well.

In this paper, we investigate set-valued mappings $\Gamma\colon\R^n\tto\R^m$ of the form
\begin{equation}\label{eq:definition_of_Gamma}
	\forall x\in\R^n\colon\quad
	\Gamma(x):=
	\left\{
		y\in\R^m\,\middle|\,
		\begin{aligned}
			h_i(x,y)&\,\leq \,0&&i\in I\\h_i(x,y)&\,=\,0&&i\in J
		\end{aligned}
	\right\}
\end{equation}
where $I:=\{1,\ldots,\ell\}$ and $J:=\{\ell+1,\ldots,p\}$ are index sets
and $h_1,\ldots,h_p\colon\R^n\times\R^m\to\overline{\R}$ are given functions.
Precise assumptions on the continuity and smoothness properties of $h_1,\ldots,h_p$
will be specified in the course of the paper. 
It is well known that the presence of R-regularity for mappings of this type
is guaranteed under validity
of the Mangasarian--Fromovitz constraint qualification, see
\cite{Borwein1986,LudererMinchenkoSatsura2002}.
More recently, this result has been extended to situations where relaxed versions of the constant rank
constraint qualification hold at the underlying reference points, see \cite{BednarczukMinchenkoRutkowski2019,MinchenkoStakhovski2011b}.
However, in some situations, these qualification conditions may turn out to be too
selective in order to guarantee applicability of the obtained results in order to
investigate the presence of R-regularity for solution mappings, see e.g.\ \cref{rem:failure_of_MFCQ}.
That is why we aim for a generalization of these findings in the presence of the
so-called \emph{relaxed constant positive linear dependence constraint qualification},
introduced in \cite{AndreaniHaeserSchuverdtSilva2012}, which is generally weaker than
the aforementioned qualification conditions. 
Our main results \cref{thm:R_regularity_and_RCPLD,thm:R_regularity_and_RCPLD_with_inner_semicontinuity}
depict that this is indeed possible. 
With these new sufficient conditions for the presence of R-regularity for the mapping $\Gamma$
at hand, we are in position to state new criteria ensuring local Lipschitz continuity of
the marginal function and R-regularity of the solution mapping associated with nonlinear
parametric optimization problems whose feasible region is modeled with the aid of $\Gamma$.
Afterwards, we use these findings in order to study the existence of so-called pessimistic
solutions as well as the presence of the celebrated \emph{partial calmness} property 
in bilevel optimization. The latter, introduced in \cite{YeZhu1995}, is one of the key
assumptions one generally postulates on the optimal value reformulation of an optimistic
bilevel optimization problem in order to infer necessary optimality conditions and
solution algorithms, see \cref{sec:bilevel_optimization} for details and suitable references.

The remaining parts of this manuscript are organized as follows:
In \cref{sec:notation_and_preliminaries}, we provide the fundamental notation exploited in
this paper. Furthermore, we recall some important constraint qualifications from nonlinear
programming as well as the underlying fundamentals of set-valued analysis.
\Cref{sec:sufficient_conditions_for_R_regularity} is dedicated to the study of the
relaxed constant positive linear dependence constraint qualification as a sufficient condition for
R-regularity of the mapping $\Gamma$. In \cref{sec:applications}, we investigate some
applications of our findings. First, we apply the obtained results to nonlinear
parametric optimization problems in order to state new sufficient conditions for the
local Lipschitz continuity of the associated optimal value function as well as R-regularity
of the associated solution mapping in \cref{sec:parametric_optimization}.
Afterwards, we employ these results in the context of bilevel optimization in order to
formulate criteria ensuring the existence of pessimistic solutions as well as the
presence of partial calmness in \cref{sec:bilevel_optimization}.
In \cref{sec:conclusions}, we close the paper with the aid of some final comments.

\section{Notation and preliminaries}\label{sec:notation_and_preliminaries}

In this paper, we mainly make use of standard notation.
The tools of set-valued analysis we exploit here can be found, e.g., in \cite{BankGuddatKlatteKummerTammer1983,Mordukhovich2006,RockafellarWets1998}.

\subsection{Basic notation}

Throughout the paper, we equip $\R^n$ with the Euclidean norm $\norm{\cdot}$. 
For some point $x\in\R^n$ and a scalar $\varepsilon>0$, we use
\[
	\mathbb U_\varepsilon(x):=\{y\in\R^n\,|\,\norm{y-x}<\varepsilon\},
	\qquad
	\mathbb B_\varepsilon(x):=\{y\in\R^n\,|\,\norm{y-x}\leq\varepsilon\}
\]
in order to denote the open and closed $\varepsilon$-ball around $x$, respectively.
For brevity, we make use of $\mathbb B:=\mathbb B_1(0)$.
For a nonempty and closed set $A\subset\R^n$, we use 
\[
	\dist(x,A):=\inf\{\norm{y-x}\,|\,y\in A\},
	\qquad
	\Pi(x,A):=\argmin\{\norm{y-x}\,|\,y\in A\}
\]
to denote the distance of $x$ to $A$ and the set of projections of $x$ onto $A$,
respectively.
It is well known that the distance function $\dist(\cdot,A)\colon\R^n\to\R$ 
is Lipschitz continuous with Lipschitz modulus $1$. 
Generally, we call a map $\phi\colon\R^n\to\R^m$ locally Lipschitz
continuous at $x$ w.r.t.\ $\Omega\subset\R^n$ whenever there are $\delta>0$ and $L>0$ such that
\[
	\forall y,y'\in\mathbb U_\delta(x)\cap \Omega\colon\quad
	\norm{\phi(y)-\phi(y')}\leq L\norm{y-y'}
\]
holds. Note that this notion is only reasonable in the situation $x\in\cl \Omega$.
For $\Omega:=\R^n$, we recover the classical definition of local Lipschitz continuity.

Let $I_1$ as well as $I_2$ be finite index sets and let $(a^i)_{i\in I_1}\subset\R^n$
as well as $(b^i)_{i\in I_2}\subset\R^n$ be two given families of vectors.
We call the pair of families $\bigl((a^i)_{i\in I_1},(b^i)_{i\in I_2})$
\emph{positive-linearly dependent} whenever there are scalars
$\alpha_i\geq 0$, $i\in I_1$, and $\beta_i$, $i\in I_2$, which are not all vanishing 
such that
\[
	\mathsmaller\sum\nolimits_{i\in I_1}\alpha_ia^i
	+
	\mathsmaller\sum\nolimits_{i\in I_2}\beta_ib^i
	=
	0.
\]
Otherwise, we refer to this pair of families as \emph{positive-linearly independent}.
A family of vectors $(a^i)_{i\in I_1}$ is called positive-linearly dependent (independent) whenever
the pair of families $\bigl((a^i)_{i\in I_1},\varnothing\bigr)$ is positive-linearly
dependent (independent).

The following lemma follows from \cite[Lemma~1]{AndreaniHaeserSchuverdtSilva2012}.
\begin{lemma}\label{lem:Caratheodory_lemma}
	Let $v^1,\ldots,v^{r+s}\in\R^n$ be given vectors such that the family $(v^i)_{i=1}^r$
	is linearly independent. Furthermore, let $z\in\R^n\setminus\{0\}$ be given as
	$z=\sum_{i=1}^{r+s}\alpha_iv^i$ for reals $\alpha_1,\ldots,\alpha_{r+s}$ satisfying
	$\alpha_{r+1},\ldots,\alpha_{r+s}>0$.
	Then there exist an index set $\mathcal I\subset\{r+1,\ldots,r+s\}$ and
	reals $\bar\alpha_i$, $i\in\{1,\ldots,r\}\cup\mathcal I$, satisfying $\bar\alpha_i>0$
	for all $i\in\mathcal I$, such that the family 
	$(v^i)_{i\in\{1,\ldots,r\}\cup\mathcal I}$ is linearly independent and
	\[
		z=\mathsmaller\sum\nolimits_{i\in\{1,\ldots,r\}\cup\mathcal I}\bar\alpha_iv^i.
	\]
\end{lemma}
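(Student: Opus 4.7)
The plan is to prove this by a standard Carathéodory-type elimination argument, reducing the number of positive-coefficient vectors one at a time until the remaining family becomes linearly independent, all the while preserving both the representation of $z$ and the positivity of the coefficients of the surviving vectors indexed above $r$.

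First, I would dispose of the trivial case: if the full family $(v^i)_{i=1}^{r+s}$ is already linearly independent, then $\mathcal I := \{r+1,\ldots,r+s\}$ and $\bar\alpha_i := \alpha_i$ do the job. So assume the family is linearly dependent. Because $(v^i)_{i=1}^r$ is linearly independent, any nontrivial dependence $\sum_{i=1}^{r+s}\gamma_i v^i = 0$ must have $\gamma_i \neq 0$ for at least one $i \in \{r+1,\ldots,r+s\}$. After possibly flipping the sign of the $\gamma_i$'s, we may assume $\gamma_{i_0}>0$ for some such $i_0$.

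Next, the elimination step: for any scalar $t$, we have $z = \sum_{i=1}^{r+s}(\alpha_i - t\gamma_i)v^i$. Choose
\[
    t^* := \min\left\{\frac{\alpha_i}{\gamma_i}\,\middle|\, i\in\{r+1,\ldots,r+s\},\ \gamma_i>0\right\}>0,
\]
which is well defined by the previous paragraph and strictly positive because all $\alpha_i$ with $i>r$ are strictly positive. Letting $\alpha_i' := \alpha_i - t^*\gamma_i$, one checks that $\alpha_i' \geq 0$ for every $i>r$ (with equality for at least one $i$, say $i^*$), and that $\alpha_i' > 0$ whenever $\gamma_i \leq 0$ for $i>r$. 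Discarding all indices $i>r$ at which $\alpha_i'=0$ (there is at least one, namely $i^*$) yields a strictly smaller index set $\mathcal I' \subsetneq \{r+1,\ldots,r+s\}$ with $\alpha_i'>0$ for $i\in\mathcal I'$ and
\[
    z = \mathsmaller\sum\nolimits_{i=1}^r \alpha_i' v^i + \mathsmaller\sum\nolimits_{i\in\mathcal I'}\alpha_i' v^i,
\]
so the hypotheses of the lemma again hold, now with the smaller family $(v^i)_{i\in\{1,\ldots,r\}\cup\mathcal I'}$ in place of the original one.

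Finally, I would iterate: since the cardinality of the surviving subset of $\{r+1,\ldots,r+s\}$ strictly decreases at each step, the procedure must terminate after finitely many rounds, and it can only terminate in the linearly independent (base) case, which delivers the desired $\mathcal I$ and coefficients $\bar\alpha_i$. The only subtlety worth guarding against is the possibility that the elimination step could destroy $z \neq 0$ or the linear independence of $(v^i)_{i=1}^r$; neither happens, since that subfamily and its span are untouched and the identity $z = \sum (\alpha_i - t^*\gamma_i) v^i$ is purely algebraic. The main — and essentially the only — real step is the clean choice of $t^*$, which is the standard trick used in Carathéodory's theorem for convex cones.
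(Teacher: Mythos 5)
Your proof is correct and is exactly the standard Carath\'eodory-type elimination argument; the paper itself gives no proof but defers to \cite[Lemma~1]{AndreaniHaeserSchuverdtSilva2012}, whose proof proceeds in the same way. The one point worth making explicit is that the iteration also terminates correctly if the surviving set $\mathcal I'$ becomes empty, since then the remaining family is $(v^i)_{i=1}^r$, which is linearly independent by hypothesis --- but this is already covered by your base case.
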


\subsection{Constraint qualifications in nonlinear programming}\label{sec:CQs}

Supposing that $\Gamma$ models the feasible region of a given parametric
optimization problem, certain constraint qualifications need to be imposed
on the images of $\Gamma$ in order to ensure that the associated Karush--Kuhn--Tucker
conditions provide a necessary optimality condition.
In this regard, we postulate the following
assumption which may hold throughout the section.
\begin{assumption}\label{ass:differentiability_properties}
	Let us fix a reference parameter $\bar x\in\R^n$ and some
	point $\bar y\in\Gamma(\bar x)$. Furthermore, 
	let all the functions $h_1,\ldots,h_p$ be continuous as well as continuously
	differentiable w.r.t.\ $y$ in a neighborhood of $\{\bar x\}\times\Gamma(\bar x)$.
\end{assumption} 

Let us now introduce the qualification conditions of our interest. Therefore,
we will exploit the set of indices associated with inequality
constraints active at $(\bar x,\bar y)$ which is defined as stated below:
\[
	I(\bar x,\bar y):=\{i\in I\,|\,h_i(\bar x,\bar y)=0\}.
\]
\begin{definition}\label{def:constraint_qualifications}
	We say that
	\begin{enumerate}
		\item[(a)] the \emph{linear independence constraint qualification} (LICQ)
			holds at $(\bar x,\bar y)$ whenever the family 
			$\bigl(\nabla_yh_i(\bar x,\bar y)\bigr)_{i\in I(\bar x,\bar y)\cup J}$
			is linearly independent,
		\item[(b)] the \emph{Mangasarian--Fromovitz constraint qualification} (MFCQ)
			holds at $(\bar x,\bar y)$ whenever the pair of families
			\[
				\left(
					\bigl(\nabla_yh_i(\bar x,\bar y)\bigr)_{i\in I(\bar x,\bar y)},
				 	\bigl(\nabla_yh_i(\bar x,\bar y)\bigr)_{i\in J}
				 \right)
			\]
			is positive-linearly independent,
		\item[(c)] the \emph{relaxed constant rank constraint qualification} (RCRCQ)
			holds at $(\bar x,\bar y)$ (w.r.t.\ $\Omega\subset\R^n$) whenever
			there is a neighborhood $U$ of $(\bar x,\bar y)$ such that for
			each set $K\subset I(\bar x,\bar y)$, the family 
			$(\nabla_yh_i(x,y))_{i\in K\cup J}$
			has constant rank on $U$ (on $U\cap(\Omega\times\R^m)$),
		\item[(d)] the \emph{relaxed constant positive linear dependence constraint qualification}
			(RCPLD) holds at $(\bar x,\bar y)$ (w.r.t.\ $\Omega\subset\R^n$)
			whenever there is a neighborhood
			$U$ of $(\bar x,\bar y)$ and an index set $S\subset J$ such that the following
			conditions hold:
			\begin{enumerate}
				\item[(i)] $\{\nabla _yh_i(\bar x,\bar y)\,|\,i\in S\}$ is a basis of the span
					of $\{\nabla _yh_i(\bar x,\bar y)\,|\,i\in J\}$,
				\item[(ii)] the family $(\nabla_yh_i(x,y))_{i\in J}$
					has constant rank on $U$ (on $U\cap(\Omega\times\R^m$), and
				\item[(iii)] for each set $K\subset I(\bar x,\bar y)$ such that the pair
					of families
					\[
						\left(
							\bigl(\nabla_yh_i(\bar x,\bar y)\bigr)_{i\in K},
				 			\bigl(\nabla_yh_i(\bar x,\bar y)\bigr)_{i\in S}
				 		\right)
					\]
					is positive-linearly dependent, the family $(\nabla _yh_i(x,y))_{i\in K\cup S}$
					is linearly dependent for each point $(x,y)\in U$
					(for each point $(x,y)\in U\cap(\Omega\times\R^m)$).
			\end{enumerate}
	\end{enumerate}
\end{definition}
While LICQ and MFCQ is are well-known constraint qualifications, RCRCQ and RCPLD are
less popular. Let us mention that RCRCQ, which has been introduced in
\cite{MinchenkoStakhovski2011}, is a less restrictive constraint
qualification than the classical \emph{constant rank constraint qualification},
see \cite{Janin1984}. On the other hand, RCPLD dates back to
\cite{AndreaniHaeserSchuverdtSilva2012} and generalizes the classical
\emph{constant positive linear dependence constraint qualification}, see
\cite{AndreaniMartinezSchuverdt2005,QiWei2000}. Checking these references,
one can observe that both MFCQ and RCRCQ individually imply validity of
RCPLD. However, neither does MFCQ imply validity of RCRCQ nor vice versa.
Clearly, LICQ is stronger than MFCQ and RCRCQ.
Let us mention that RCPLD is stable in the sense that whenever it is valid
at some reference point, then it also holds in a neighborhood of this point.
In order to see this, one may adapt the proof of \cite[Theorem~4]{AndreaniHaeserSchuverdtSilva2012},
which is stated in the non-parametric setting, to the situation at hand. 
Finally, we would like
to mention that the notion of RCPLD can be extended to non-smooth constraint
systems as well as complementarity-type feasible regions, and, thus, applies to
mathematical programs with complementarity constraints and 
different reformulations of bilevel optimization problems, 
see \cite{ChieuLee2013,GuoLin2013,XuYe2019} for details.

\subsection{Properties of set-valued mappings}\label{sec:set_valued_analysis}

Let $\Upsilon\colon\R^n\tto\R^m$ be a set-valued mapping.
We refer to the sets 
\[
	\gph\Upsilon:=\{(x,y)\in\R^n\times\R^m\,|\,y\in\Upsilon(x)\},
	\qquad
	\dom\Upsilon:=\{x\in\R^n\,|\,\Upsilon(x)\neq\varnothing\}
\] 
as graph and domain of $\Upsilon$, respectively.
Let us fix a point $\bar x\in\dom\Upsilon$.
We call $\Upsilon$ \emph{locally bounded} at $\bar x$ whenever there are a bounded set
$B\subset\R^m$ and a neighborhood $U\subset\R^n$ of $\bar x$ such that
$\Upsilon(x)\subset B$ holds for all $x\in U$.
One calls $\Upsilon$ \emph{upper semicontinuous} at $\bar x$ whenever 
for each open set $O\subset\R^m$ which satisfies $\Upsilon(\bar x)\subset O$,
there exists a neighborhood $U\subset\R^n$ of $\bar x$ such that
$\Upsilon(x)\subset O$ holds for all $x\in U$.
Recall that $\Upsilon$ is called \emph{lower semicontinuous}
at $\bar x$ (w.r.t.\ $\Omega\subset\R^n$) whenever for each
open set $O\subset\R^m$ with $\Upsilon(\bar x)\cap O\neq\varnothing$, there is
a neighborhood $U\subset\R^n$ of $\bar x$ such that $\Upsilon(x)\cap O\neq
\varnothing$ holds for all $x\in U$ (for all $x\in U\cap\Omega$). 
We call $\Upsilon$ \emph{inner semicontinuous} at some point $(\bar x,\bar y)\in\gph\Upsilon$
(w.r.t.\ $\Omega$) whenever for each sequence $\{x^k\}_{k\in\N}\subset\R^n$ 
($\{x^k\}_{k\in\N}\subset\Omega$)
converging to $\bar x$, there exists a sequence $\{y^k\}_{k\in\N}\subset\R^m$ which converges
to $\bar y$ and satisfies $y^k\in\Upsilon(x^k)$ for sufficiently large $k\in\N$. 
Note that $\Upsilon$ is lower semicontinuous at $\bar x$ (w.r.t.\ $\Omega$) if and only if
it is inner semicontinuous at each point from $\{\bar x\}\times\Upsilon(\bar x)$ (w.r.t.\ $\Omega$).
The situation $\Omega:=\dom\Upsilon$ will be of particular interest in this manuscript.

In the theory of set-valued analysis, there exist several different notions of
Lipschitzianity. Recall that $\Upsilon$ possesses the \emph{Aubin property} at
some point $(\bar x,\bar y)\in\gph\Upsilon$ (w.r.t.\ $\Omega$) whenever there
exist neighborhoods $U$ and $V$ of $\bar x$ and $\bar y$, respectively, as
well as a constant $\kappa>0$ such that
\[
	\forall x,x'\in U\;(\forall x,x'\in U\cap\Omega)\colon\quad
	\Upsilon(x)\cap V\subset\Upsilon(x')+\kappa\,\norm{x-x'}\mathbb B
\]
holds. 
One can easily check that whenever $\Upsilon$ possesses the Aubin property at 
$(\bar x,\bar y)$ (w.r.t.\ $\Omega$), then it is inner semicontinuous (w.r.t.\ $\Omega$)
at this point.
Using the concept of coderivatives which is based on the limiting normal cone
from variational analysis, one can formulate a necessary and sufficient condition
for the presence of the Aubin property for set-valued mappings with closed
graphs, see \cite[Theorem~4.10]{Mordukhovich2006}.
In \cite[Corollary~4.39]{Mordukhovich2006}, one can find a characterization 
of the Aubin property of $\Gamma$ from \eqref{eq:definition_of_Gamma} 
at some point of its graph under validity
of an MFCQ-type assumption. Let us, however, note that MFCQ from \cref{def:constraint_qualifications}
is only sufficient but not necessary for the presence of the Aubin property.
A recent study on the presence of the Aubin property for implicitly defined 
set-valued mappings of more general form can be found in \cite{GfrererOutrata2016}.

Let us now focus on the particular mapping $\Gamma$ from \eqref{eq:definition_of_Gamma}
in more detail. In this manuscript, we are interested in the property of $\Gamma$ being
so-called R-regular at a point of its graph, see \cite[Section~6.2]{LudererMinchenkoSatsura2002}.
\begin{definition}\label{def:R_regularity}
	Fix $(\bar x,\bar y)\in\gph\Gamma$ and some set $\Omega\subset\R^n$.
	Then $\Gamma$ is called \emph{R-regular} at $(\bar x,\bar y)$ (w.r.t.\ $\Omega$)
	whenever there exist a constant $\kappa>0$ and a neighborhood $U$ of
	$(\bar x,\bar y)$ such that the condition
	\begin{equation}\label{eq:R_regularity}
		\begin{aligned}
			&\forall  (x,y)\in U\;(\forall(x,y)\in U\cap(\Omega\times\R^m))\colon\\
			&\qquad\dist(y,\Gamma(x))
				\leq
				\kappa\,
				\max\bigl\{	0,
					\max\{h_i(x,y)\,|\,i\in I\},
					\max\{|h_i(x,y)|\,|\,i\in J\}
					\bigr\}
		\end{aligned}
	\end{equation}
	holds. 
\end{definition}
The notion of R-regularity can be traced back to
\cite{Fedorov1979,Ioffe1979} where it has been exploited as a constraint qualification.
Following \cite{BoschJouraniHenrion2004,FabianHenrionKrugerOutrata2010,Robinson1976}, 
one might be tempted to say that the presence of R-regularity is equivalent to
the validity of a local error bound condition at some reference point of the constraint system 
induced by $\Gamma$ provided the latter does not depend on the parameter. 
In this regard,
R-regularity of a \emph{parametric} constraint system is a generalization of the concept of error bounds.
We refer the interested reader to \cite{Ye1998} where the concept of so-called \emph{uniform parametric error bounds}, which is closely related to R-regularity, is studied.
Let us note that due to \cite[Theorem~1]{Robinson1976} or 
\cite[Theorem~3.2]{Borwein1986}, R-regularity of $\Gamma$ at
a given reference point is implied by validity of MFCQ at the latter.
A generalization of this result to the setting where the functions $h_1,\ldots,h_p$ are non-smooth
can be found in \cite{Yen1997}.
We would like to point out that R-regularity can be interpreted as a variant of \emph{metric
regularity}, see \cite{Ioffe2000} and the references therein, 
and is stronger than
\emph{metric subregularity} of the feasibility mapping associated with the given parametric constraint
set where the parameter is fixed, see \cite[Section~1]{GfrererMordukhovich2017}.
Furthermore, following \cite{GfrererMordukhovich2017,Robinson1976}, it is possible to generalize
the concept of R-regularity, which is called \emph{stability} or \emph{Robinson stability} in these papers,
to geometric constraint systems of the type
\[
	\tilde h(x,y)\in C
\]
where $\tilde h\colon\R^n\times\R^m\to\R^p$ is continuously differentiable w.r.t.\ $y$ and
$C\subset\R^p$ is a closed set.

Invoking \cite[Theorem~5.1]{BednarczukMinchenkoRutkowski2019},
one can easily check that whenever $\Gamma$ is R-regular at $(\bar x,\bar y)$
w.r.t.\ $\Omega$ while all the functions $h_1,\ldots,h_p$ are locally Lipschitz
continuous at this point, then $\Gamma$ possesses the Aubin property w.r.t.\
$\Omega$ at this point. 
By means of simple examples, one can check that the converse statement
does not hold in general even if the data functions are continuously
differentiable and, thus, locally Lipschitzian, 
see \cite[Example~1]{MinchenkoStakhovski2011b}.
The following result even holds in the absence of local Lipschitz continuity of the data functions.
\begin{lemma}\label{lem:inner_semicontinuity_via_R_regularity}
	Let $\Gamma$ be R-regular at some point $(\bar x,\bar y)\in\gph\Gamma$
	w.r.t.\ $\dom\Gamma$.
	Furthermore, let the functions $h_1,\ldots,h_p$ be continuous
	at $(\bar x,\bar y)$ and let
	$h_1(x,\cdot),\ldots,h_p(x,\cdot)\colon\R^m\to\R$ be continuous for
	each $x\in\dom\Gamma$ which comes from a neighborhood of $\bar x$.
	Then $\Gamma$ is inner semicontinuous at
	$(\bar x,\bar y)$ w.r.t.\ $\dom\Gamma$.
\end{lemma}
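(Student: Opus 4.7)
The plan is to take an arbitrary sequence $\{x^k\}_{k\in\N}\subset\dom\Gamma$ with $x^k\to\bar x$ and construct a sequence $y^k\in\Gamma(x^k)$ converging to $\bar y$ by combining the R-regularity estimate at the test point $(x^k,\bar y)$ with continuity of the constraint functions at $(\bar x,\bar y)$. The key observation is that R-regularity controls the distance $\dist(\bar y,\Gamma(x^k))$ in terms of the constraint residuals $h_i(x^k,\bar y)$, and these residuals vanish in the limit because $\bar y\in\Gamma(\bar x)$.

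In detail, I would first fix $\kappa>0$ and a neighborhood $U$ of $(\bar x,\bar y)$ according to \cref{def:R_regularity}. Since $(x^k,\bar y)\to(\bar x,\bar y)$ and $x^k\in\dom\Gamma$, we have $(x^k,\bar y)\in U\cap(\dom\Gamma\times\R^m)$ for all sufficiently large $k$, so the R-regularity inequality \eqref{eq:R_regularity} applies and yields
\[
    \dist(\bar y,\Gamma(x^k))
    \leq
    \kappa\,\max\bigl\{0,\max\{h_i(x^k,\bar y)\,|\,i\in I\},\max\{|h_i(x^k,\bar y)|\,|\,i\in J\}\bigr\}.
\]
Continuity of $h_1,\ldots,h_p$ at $(\bar x,\bar y)$, together with $h_i(\bar x,\bar y)\leq 0$ for $i\in I$ and $h_i(\bar x,\bar y)=0$ for $i\in J$, forces the right-hand side to converge to $0$, hence $\dist(\bar y,\Gamma(x^k))\to 0$.

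It remains to produce the sequence $\{y^k\}_{k\in\N}$. Because $h_i(x^k,\cdot)$ is continuous for each $i$ (and for each $x^k$ sufficiently close to $\bar x$ by assumption), the set $\Gamma(x^k)$ is closed in $\R^m$; moreover $\Gamma(x^k)\ne\varnothing$ since $x^k\in\dom\Gamma$. I therefore pick $y^k\in\Gamma(x^k)$ satisfying $\norm{y^k-\bar y}\leq\dist(\bar y,\Gamma(x^k))+1/k$ (or, equivalently, take a projection $y^k\in\Pi(\bar y,\Gamma(x^k))$, which exists because intersecting with a closed ball around $\bar y$ large enough to meet $\Gamma(x^k)$ yields a compact set on which $\norm{\cdot-\bar y}$ attains its minimum). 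Then $\norm{y^k-\bar y}\to 0$, which establishes the required inner semicontinuity of $\Gamma$ at $(\bar x,\bar y)$ w.r.t.\ $\dom\Gamma$.

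There is no serious obstacle here; the only delicate point is bookkeeping with the domain restriction $\Omega=\dom\Gamma$ (to guarantee the R-regularity inequality is applicable along the sequence $(x^k,\bar y)$) and ensuring nonemptiness and closedness of $\Gamma(x^k)$ so that a near-projection can actually be selected.
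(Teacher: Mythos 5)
Your proposal is correct and follows essentially the same route as the paper's proof: apply the R-regularity estimate at the test points $(x^k,\bar y)$, use continuity of $h_1,\ldots,h_p$ at $(\bar x,\bar y)$ together with feasibility of $\bar y$ to drive the right-hand side to zero, and select $y^k\in\Pi(\bar y,\Gamma(x^k))$ (justified by nonemptiness and closedness of $\Gamma(x^k)$). The near-projection variant with $1/k$ slack is an inessential cosmetic difference.
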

\begin{proof}
	The assumptions of the lemma particularly imply the existence of
	a constant $\kappa>0$ and some $\delta>0$ such that
	\[
		\dist(\bar y,\Gamma(x))
		\leq
		\kappa\,
			\max\bigl\{	0,
						\max\{h_i(x,\bar y)\,|\,i\in I\},
						\max\{|h_i(x,\bar y)|\,|\,i\in J\}
				\bigr\}
	\]
	holds for all $x\in \mathbb U_\delta(\bar x)\cap\dom\Gamma$.
	Thus, for each sequence $\{x^k\}_{k\in\N}\subset\dom\Gamma$
	with $x^k\to\bar x$, the estimate
	\[
		\Vert\bar y-y^k\Vert
		\leq
		\kappa\,
			\max\bigl\{	0,
						\max\{h_i(x^k,\bar y)\,|\,i\in I\},
						\max\{|h_i(x^k,\bar y)|\,|\,i\in J\}
				\bigr\}
	\]
	holds for sufficiently large $k\in\N$ where $y^k\in\Pi(\bar y,\Gamma(x^k))$ 
	is arbitrarily chosen. Note that $\Pi(\bar y,\Gamma(x^k))$ is nonempty
	for each $k\in\N$
	since $\Gamma(x^k)$ is nonempty and closed
	by continuity of $h_1(x^k,\cdot),\ldots,h_p(x^k,\cdot)$
	and the choice $x^k\in\dom\Gamma$
	for sufficiently large $k\in\N$.
	Exploiting the continuity of $h_1,\ldots,h_p$ at $(\bar x,\bar y)$,
	we find $\Vert\bar y-y^k\Vert\to 0$ as $k\to\infty$, i.e., $\Gamma$ is inner
	semicontinuous at $(\bar x,\bar y)$ w.r.t.\ $\dom\Gamma$.
\end{proof}

By definition, R-regularity of a set-valued mapping at a given reference point
is stable in the sense that it extends to points in a sufficiently small
neighborhood. However, we get the following even stronger stability property 
from \cite[Lemma~6.19]{LudererMinchenkoSatsura2002} which shows that the
modulus of R-regularity is uniformly bounded in a neighborhood of
a compact set of points where a given set-valued mapping is R-regular.
\begin{lemma}\label{lem:stability_of_R_regularity}
	Let $C\subset\gph\Gamma$ be compact and assume that $\Gamma$ is R-regular
	w.r.t.\ $\dom\Gamma$ at each point from $C$. 
	Furthermore, let $O$ be a neighborhood of $C$ where $h_1,\ldots,h_p$ are
	continuous.
	Then there exist a constant
	$\kappa>0$ and an open set $U$ such that $C\subset U\subset O$
	while \eqref{eq:R_regularity}
	holds with $\Omega:=\dom\Gamma$, i.e., there is a uniform modulus $\kappa$ 
	of R-regularity on $C$.
\end{lemma}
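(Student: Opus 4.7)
The plan is a standard finite-subcover argument, using R-regularity pointwise on $C$ and taking the maximum of finitely many moduli.

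First, for every point $(\hat x,\hat y)\in C$, the hypothesis of R-regularity w.r.t.\ $\dom\Gamma$ at $(\hat x,\hat y)$ furnishes a constant $\kappa_{(\hat x,\hat y)}>0$ together with an open neighborhood $V_{(\hat x,\hat y)}$ of $(\hat x,\hat y)$ on which \eqref{eq:R_regularity} holds with this modulus and with $\Omega:=\dom\Gamma$. Since $O$ is an open neighborhood of $C$, we may intersect and replace $V_{(\hat x,\hat y)}$ with $V_{(\hat x,\hat y)}\cap O$, which is still an open neighborhood of $(\hat x,\hat y)$ on which the same local R-regularity estimate remains valid because restricting the set of admissible points does not affect the inequality.

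Next I would invoke the compactness of $C\subset\gph\Gamma$: the family $\{V_{(\hat x,\hat y)}\cap O\,|\,(\hat x,\hat y)\in C\}$ is an open cover of $C$, so there is a finite subcover, indexed by points $(\hat x_j,\hat y_j)\in C$ for $j=1,\dots,N$. Setting
\[
    U:=\bigcup_{j=1}^{N}\bigl(V_{(\hat x_j,\hat y_j)}\cap O\bigr),
    \qquad
    \kappa:=\max_{j=1,\dots,N}\kappa_{(\hat x_j,\hat y_j)}
\]
yields an open set $U$ with $C\subset U\subset O$ and a single positive constant $\kappa$.

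Finally, for any $(x,y)\in U\cap(\dom\Gamma\times\R^m)$, pick an index $j$ with $(x,y)\in V_{(\hat x_j,\hat y_j)}\cap O$; the local R-regularity estimate at $(\hat x_j,\hat y_j)$ then gives
\[
    \dist(y,\Gamma(x))
    \leq
    \kappa_{(\hat x_j,\hat y_j)}\,
        \max\bigl\{0,\,\max\{h_i(x,y)\,|\,i\in I\},\,\max\{|h_i(x,y)|\,|\,i\in J\}\bigr\}
    \leq
    \kappa\,(\ldots),
\]
which is exactly \eqref{eq:R_regularity} on $U$ with the uniform modulus $\kappa$. The continuity hypothesis on $h_1,\ldots,h_p$ on $O$ is used only implicitly, to ensure that the local neighborhoods provided by R-regularity can be taken inside $O$ without losing the estimate; it plays no active role in the combinatorics. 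The entire argument is routine, and the only point requiring care is to shrink the local neighborhoods into $O$ \emph{before} extracting the finite subcover, so that the resulting $U$ automatically satisfies $U\subset O$.
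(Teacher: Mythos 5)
Your argument is correct: since R-regularity at a point is, by \cref{def:R_regularity}, already a neighborhood-wide estimate, shrinking each such neighborhood into $O$, extracting a finite subcover of the compact set $C$, and taking the maximum of the finitely many moduli does yield the uniform constant, and the right-hand side of \eqref{eq:R_regularity} being nonnegative lets you pass to the larger $\kappa$. The paper itself gives no proof of this lemma --- it only cites \cite[Lemma~6.19]{LudererMinchenkoSatsura2002} --- so there is nothing to compare against; your finite-subcover argument is the standard one and is complete as written (and you are right that the continuity of $h_1,\ldots,h_p$ on $O$ plays no active role in it).
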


\section{A sufficient condition for R-regularity}\label{sec:sufficient_conditions_for_R_regularity}

If not stated otherwise, we assume that \cref{ass:differentiability_properties} holds
throughout the section. 
For simplicity, let us postulate that the functions $h_i(x,\cdot)\colon\R^m\to\R$, $i\in I\cup J$, 
are continuous for each $x\in\dom\Gamma$.
Finally, we will, at some instances, exploit the following additional
assumptions.
\begin{itemize}[leftmargin=4em]
	\item[\textbf{(A1)}]\label{ass:A1} 
		For each $x\in\R^n$, the functions $h_i(x,\cdot)\colon\R^m\to\R$, $i\in I$, 
		are convex while the functions $h_i(x,\cdot)\colon\R^m\to\R$, $i\in J$, are affine.
	\item[\textbf{(A2)}]\label{ass:A2} 
		The set-valued mapping $\Gamma$ is locally bounded at $\bar x\in\dom\Gamma$.
\end{itemize}

Subsequently, we will first derive a sequential characterization of R-regularity which holds
under validity of the aforementioned conditions. Afterwards, we will relate this sequential
characterization with the validity of the constraint qualification RCPLD. 

\subsection{A sequential characterization of R-regularity}\label{sec:sequential_R_regularity}

For some parameter $x\in\dom\Gamma$ and $\nu\notin\Gamma(x)$, $\Pi(\nu,\Gamma(x))$ equals the
solution set of
\[
	\min\limits_y\bigl\{\norm{y-\nu}\,\bigl|\,y\in\Gamma(x)\bigr\}
\]
since $\Gamma(x)$ is a closed set by continuity of $h_1(x,\cdot),\ldots,h_p(x,\cdot)$.
Due to $\nu\notin \Gamma(x)$, the objective function of the above problem is
continuously differentiable in a neighborhood of all points from $\Pi(\nu,\Gamma(x))$.
Thus, it is reasonable to investigate the associated Lagrange multiplier set
\[
	\Lambda_\nu(x,y)
	:=
	\left\{
		\lambda\in\R^p\,\middle|\,
			\frac{y-\nu}{\norm{y-\nu}}+\sum\limits_{i=1}^p\lambda_i\nabla_yh_i(x,y)=0,\,
			\forall i\in I\colon\,\lambda_i\geq 0,\,\lambda_ih_i(x,y)=0
	\right\}
\]
for each $y\in\Pi(\nu,\Gamma(x))$ as long as the pair $(x,y)$ is close
to $\{\bar x\}\times\Gamma(\bar x)$. 
For some constant $M>0$, we make use of
\[
	\Lambda_\nu^M(x,y)
	:=
	\left\{
		\lambda\in\Lambda_\nu(x,y)\,\middle|\,
		\mathsmaller\sum\nolimits_{i=1}^p\abs{\lambda_i}\leq M
	\right\}.
\]
Let us note that under validity of \hyperref[ass:A1]{\textup{\textbf{(A1)}}},
the image sets of $\Gamma$ are convex which yields that the associated
projection sets from above are actually singletons. 

Using this notation, we obtain the following technical lemma.
\begin{lemma}\label{lem:characterization_of_R-regularity}
	Let \hyperref[ass:A1]{\textup{\textbf{(A1)}}} and \hyperref[ass:A2]{\textup{\textbf{(A2)}}} hold.
	Assume that there exist a constant $M>0$ as well as sequences 
	$\{x^k\}_{k\in\N}\subset\dom\Gamma$, $\{\nu^k\}_{k\in\N}\subset\R^m$, 
	and $\{y^k\}_{k\in\N}\subset\R^m$
	satisfying $x^k\to\bar x$, $\nu^k\to\bar y$, as well as
	$\nu^k\notin\Gamma(x^k)$ and $y^k\in\Pi(\nu^k,\Gamma(x^k))$ for all $k\in\N$
	such that the set $\Lambda_{\nu^k}^M(x^k,y^k)$ is nonempty for sufficiently large $k\in\N$.
	Then we have $y^k\to\bar y$ and
	\begin{equation}\label{eq:sequential_bound}
		\dist(\nu^k,\Gamma(x^k))
		\leq
		M\,\max\bigl\{0,\max\{h_i(x^k,\nu^k)\,|\,i\in I\},
					\max\{|h_i(x^k,\nu^k)|\,|\,i\in J\}\bigr\}
	\end{equation}
	for sufficiently large $k\in\N$.
\end{lemma}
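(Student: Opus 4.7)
The plan is to exploit the convex structure coming from \hyperref[ass:A1]{\textup{\textbf{(A1)}}} to turn the KKT-type conditions defining $\Lambda_{\nu^k}^M(x^k,y^k)$ into a global inequality, and then to read the desired residual bound \eqref{eq:sequential_bound} off this inequality by plugging in $y=\nu^k$.

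More precisely, I would first fix a sequence $\lambda^k\in\Lambda_{\nu^k}^M(x^k,y^k)$ for sufficiently large $k$ and consider, for each such $k$, the Lagrangian
\[
    L_k(y):=\norm{y-\nu^k}+\sum\nolimits_{i=1}^p\lambda_i^k h_i(x^k,y).
\]
Under \hyperref[ass:A1]{\textup{\textbf{(A1)}}}, for $i\in I$ the function $h_i(x^k,\cdot)$ is convex and $\lambda_i^k\geq 0$, while for $i\in J$ the function $h_i(x^k,\cdot)$ is affine; together with the convexity of $\norm{\cdot-\nu^k}$ (note $\nu^k\notin\Gamma(x^k)$ is not needed for this), $L_k$ is convex on $\R^m$. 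The gradient condition in the definition of $\Lambda_{\nu^k}$ says exactly that $0\in\partial L_k(y^k)$ in the smooth sense (since $\nu^k\neq y^k$ ensures $\norm{\cdot-\nu^k}$ is differentiable at $y^k$), so $y^k$ globally minimizes $L_k$. Evaluating the inequality $L_k(y^k)\leq L_k(\nu^k)$, using complementary slackness for $i\in I$ and $h_i(x^k,y^k)=0$ for $i\in J$, I obtain
\[
    \norm{y^k-\nu^k}
    \leq
    \sum\nolimits_{i\in I}\lambda_i^k h_i(x^k,\nu^k)+\sum\nolimits_{i\in J}\lambda_i^k h_i(x^k,\nu^k).
\]

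Next I would upper-bound each summand on the right: for $i\in I$, $\lambda_i^k\geq 0$ gives $\lambda_i^k h_i(x^k,\nu^k)\leq \lambda_i^k\max\{0,h_i(x^k,\nu^k)\}$, and for $i\in J$ the elementary bound $\lambda_i^k h_i(x^k,\nu^k)\leq \abs{\lambda_i^k}\abs{h_i(x^k,\nu^k)}$ applies. Pulling out the common factor $\max\{0,\max_{i\in I}h_i(x^k,\nu^k),\max_{i\in J}\abs{h_i(x^k,\nu^k)}\}$ and using $\sum_{i=1}^p\abs{\lambda_i^k}\leq M$ then yields \eqref{eq:sequential_bound}, after noting that $\norm{y^k-\nu^k}=\dist(\nu^k,\Gamma(x^k))$ by the projection property.

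Finally, for the convergence $y^k\to\bar y$, I would observe that by \hyperref[ass:A2]{\textup{\textbf{(A2)}}} the sequence $\{y^k\}$ is bounded, so it suffices to show that any limit point equals $\bar y$. Using the continuity of $h_1,\ldots,h_p$ at $(\bar x,\bar y)$, together with $\bar y\in\Gamma(\bar x)$ (so $h_i(\bar x,\bar y)\leq 0$ for $i\in I$ and $h_i(\bar x,\bar y)=0$ for $i\in J$), the right-hand side of \eqref{eq:sequential_bound} vanishes along $(x^k,\nu^k)\to(\bar x,\bar y)$. Hence $\norm{y^k-\nu^k}\to 0$, and combined with $\nu^k\to\bar y$ this gives $y^k\to\bar y$.

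I do not anticipate a genuine obstacle: the main step is recognizing that the KKT-type stationarity in the definition of $\Lambda_{\nu^k}^M$ is, under \hyperref[ass:A1]{\textup{\textbf{(A1)}}}, a \emph{global} minimality condition for the Lagrangian, which automatically produces the desired error bound with constant exactly $M$. The role of \hyperref[ass:A2]{\textup{\textbf{(A2)}}} is only a convenience for asserting boundedness of $\{y^k\}$ in the convergence argument.
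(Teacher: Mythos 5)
Your proof is correct and follows essentially the same route as the paper: the convexity from \hyperref[ass:A1]{\textup{\textbf{(A1)}}} converts the stationarity condition in $\Lambda_{\nu^k}^M(x^k,y^k)$ into the inequality $\norm{y^k-\nu^k}\leq\sum_i\lambda_i^kh_i(x^k,\nu^k)$, which is then bounded by $M$ times the residual. The only (cosmetic) differences are that you phrase this as global minimality of the convex Lagrangian rather than applying the gradient inequality term by term to the stationarity equation paired with $\nu^k-y^k$, and you obtain $y^k\to\bar y$ directly from the established bound instead of via the paper's subsequence/accumulation-point argument.
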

\begin{proof}
	Due to \hyperref[ass:A2]{\textup{\textbf{(A2)}}}, $\{y^k\}_{k\in\N}$ is bounded.
	Fix an arbitrary convergent subsequence $\{y^{k_s}\}_{s\in\N}$ with limit
	$\tilde y\in\R^m$. 
	By assumption, for all sufficiently large $s\in\N$, we find a multiplier
	$\lambda^{k_s}\in \Lambda^M_{\nu^{k_s}}(x^{k_s},y^{k_s})$.
	Exploiting \hyperref[ass:A1]{\textup{\textbf{(A1)}}} and the 
	definition of the set $\Lambda^M_{\nu^{k_s}}(x^{k_s},y^{k_s})$, 
	we obtain
	\begin{align*}
		\norm{ y^{k_s}-\nu^{k_s}}
		&=
		\mathsmaller\sum\nolimits_{i=1}^p
			\lambda^{k_s}_{i}\nabla_yh_i(x^{k_s},y^{k_s})^\top(\nu^{k_s}-y^{k_s})
		\\
		&\leq
		\mathsmaller\sum\nolimits_{i=1}^p
			\lambda^{k_s}_i\bigl((h_i(x^{k_s},\nu^{k_s})-h_i(x^{k_s},y^{k_s})\bigr)
		=
		\mathsmaller\sum\nolimits_{i=1}^p
			\lambda^{k_s}_i h_i(x^{k_s},\nu^{k_s})\\
		&\leq
		\mathsmaller\sum\nolimits_{i\in I}\lambda^{k_s}_i\max\{0,h_i(x^{k_s},\nu^{k_s})\}
		+
		\mathsmaller\sum\nolimits_{i\in J}|\lambda^{k_s}_i|\,\bigl|h_i(x^{k_s},\nu^{k_s})\bigr|\\
		&\leq
		M\,\max\bigl\{
				0,
				\max\{h_i(x^{k_s},\nu^{k_s})\,|\,i\in I\},
				\max\{|h_i(x^{k_s},\nu^{k_s})|\,|\,i\in J\}
				\bigr\}
	\end{align*}
	for sufficiently large $s\in\N$.
	Taking the limit $s\to\infty$ yields $\norm{\tilde y-\bar y}\leq 0$, i.e., $\tilde y=\bar y$.
	Particularly, the bounded sequence $\{y^k\}_{k\in\N}$ possesses the unique accumulation
	point $\bar y$ which must be its limit. Reprising the above arguments, we infer
	the second statement of the lemma from $\dist(\nu^k,\Gamma(x^k))=\norm{y^k-\nu^k}$.
\end{proof}

Next, we exploit \cref{lem:characterization_of_R-regularity} in order to characterize
R-regularity of $\Gamma$ under validity of 
\hyperref[ass:A1]{\textup{\textbf{(A1)}}} and \hyperref[ass:A2]{\textup{\textbf{(A2)}}}.
This result is related to \cite[Theorem~3.2]{BednarczukMinchenkoRutkowski2019} and
\cite[Theorems~2 and 3]{MinchenkoStakhovski2011b} where 
these assumptions are replaced by some a-priori 
inner semicontinuity of $\Gamma$. Here, we follow the ideas used for the proof
of \cite[Theorem~2]{MinchenkoStakhovski2011b}.
\begin{theorem}\label{thm:characterization_R_regularity_convexity}
	Let \hyperref[ass:A1]{\textup{\textbf{(A1)}}} and \hyperref[ass:A2]{\textup{\textbf{(A2)}}} hold.
	Then the following statements are equivalent.
	\begin{enumerate}
		\item[(a)] The mapping $\Gamma$ is R-regular at $(\bar x,\bar y)$ w.r.t.\ $\dom \Gamma$.
		\item[(b)] There exists a constant $M>0$ such that for each sequences 
			$\{x^k\}_{k\in\N}\subset\dom\Gamma$, $\{\nu^k\}_{k\in\N}\subset\R^m$, 
			and $\{y^k\}_{k\in\N}\subset\R^m$
			satisfying $x^k\to\bar x$, $\nu^k\to\bar y$, and
			$\nu^k\notin\Gamma(x^k)$ as well as $y^k\in\Pi(\nu^k,\Gamma(x^k))$ for all $k\in\N$,
			the set $\Lambda_{\nu^k}^M(x^k,y^k)$ is nonempty for sufficiently large $k\in\N$.
	\end{enumerate}
\end{theorem}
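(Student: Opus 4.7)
The plan is to prove the two implications separately. The direction (b)$\Rightarrow$(a) is essentially a contrapositive application of \cref{lem:characterization_of_R-regularity}, while (a)$\Rightarrow$(b) is the substantive part and relies on an exact-penalization argument that turns the R-regularity modulus into an explicit uniform bound on certain Lagrange multipliers. Throughout, I will use the shorthand $r(x,y):=\max\bigl\{0,\max\{h_i(x,y)\,|\,i\in I\},\max\{|h_i(x,y)|\,|\,i\in J\}\bigr\}$ for the constraint residual.

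For (b)$\Rightarrow$(a), I would argue by contradiction. Suppose (a) fails. Then for $\kappa:=M+1$ and a shrinking sequence of neighborhoods of $(\bar x,\bar y)$ one extracts $x^k\in\dom\Gamma$, $\nu^k\in\R^m$ with $x^k\to\bar x$, $\nu^k\to\bar y$ and $\dist(\nu^k,\Gamma(x^k))>(M+1)\,r(x^k,\nu^k)$; in particular $\nu^k\notin\Gamma(x^k)$. By \hyperref[ass:A2]{\textup{\textbf{(A2)}}} and the continuity of $h_1(x^k,\cdot),\ldots,h_p(x^k,\cdot)$, the set $\Gamma(x^k)$ is nonempty, closed, and eventually bounded, so one may pick $y^k\in\Pi(\nu^k,\Gamma(x^k))$. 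Hypothesis (b) applied to this data yields $\Lambda^M_{\nu^k}(x^k,y^k)\neq\varnothing$ for large $k$, and \cref{lem:characterization_of_R-regularity} then gives $\dist(\nu^k,\Gamma(x^k))\leq M\,r(x^k,\nu^k)$, a contradiction.

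For (a)$\Rightarrow$(b), I would set $M:=\kappa$, the R-regularity constant. Given any sequences as in (b), first note that $y^k\to\bar y$: indeed, by continuity of the $h_i$ we have $r(x^k,\nu^k)\to r(\bar x,\bar y)=0$, and R-regularity gives $\|y^k-\nu^k\|=\dist(\nu^k,\Gamma(x^k))\leq\kappa\,r(x^k,\nu^k)\to0$. Hence for large $k$, the pair $(x^k,y^k)$ lies in the R-regularity neighborhood $U$. Next, I consider the penalized function $\varphi_k(y):=\|y-\nu^k\|+\kappa\,r(x^k,y)$ and claim $y^k$ is a local minimizer of $\varphi_k$ near $y^k$. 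For $y\in\Gamma(x^k)$ near $y^k$ this is just optimality of the projection. For $y\notin\Gamma(x^k)$ close enough to $y^k$ that R-regularity applies at $(x^k,y)$, choosing $z\in\Pi(y,\Gamma(x^k))$ and using the triangle inequality gives $\|y-\nu^k\|\geq\|z-\nu^k\|-\|y-z\|\geq\|y^k-\nu^k\|-\kappa\,r(x^k,y)$, whence $\varphi_k(y)\geq\varphi_k(y^k)$. Under \hyperref[ass:A1]{\textup{\textbf{(A1)}}}, $\varphi_k$ is convex in $y$, so Fermat's rule gives $0\in\partial_y\varphi_k(y^k)$. Since $\nu^k\neq y^k$, the subgradient of $\|\cdot-\nu^k\|$ at $y^k$ is the singleton $\{(y^k-\nu^k)/\|y^k-\nu^k\|\}$, and the subgradient of $r(x^k,\cdot)$ at $y^k$ lies in $\operatorname{conv}\bigl(\{0\}\cup\{\nabla_yh_i(x^k,y^k)\,|\,i\in I(x^k,y^k)\}\cup\{\pm\nabla_yh_j(x^k,y^k)\,|\,j\in J\}\bigr)$ by the max-rule. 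Reading off coefficients yields multipliers $\lambda^k\in\Lambda_{\nu^k}(x^k,y^k)$ (with automatic complementarity and sign conditions) satisfying $\sum_{i=1}^p|\lambda^k_i|\leq\kappa$, since the convex-hull weights sum to at most one and are scaled by $\kappa$. Thus $\lambda^k\in\Lambda^M_{\nu^k}(x^k,y^k)$.

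The delicate step is the exact-penalization claim together with the subdifferential calculus that reads off bounded multipliers from the Fermat condition; verifying that $y^k$ lies in the R-regularity neighborhood for large $k$ (via the preliminary observation $y^k\to\bar y$) is what makes this step legitimate. Once the penalty parameter is identified with $\kappa$, the uniform bound $M=\kappa$ on the multipliers falls out automatically from the structure of the subdifferential of the convex residual, without any separate compactness argument.
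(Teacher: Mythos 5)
Your proof is correct, and the direction $\text{(b)}\Rightarrow\text{(a)}$ coincides with the paper's (a contradiction argument funnelled through \cref{lem:characterization_of_R-regularity}). For $\text{(a)}\Rightarrow\text{(b)}$ you take a genuinely different, and in fact more economical, route. The paper penalizes with $2\dist(\cdot,\Gamma(x^k))$ via Clarke's exact penalization, converts the distance bound into a maximum of a Lagrangian over a compact polyhedron of multipliers, and then invokes Danskin's theorem to obtain a first-order condition $0\in P$ for the resulting max-function; this yields the modulus $M=2\kappa$. You instead penalize directly with $\kappa\, r(x^k,\cdot)$, prove exactness of this penalty by the elementary triangle-inequality estimate $\norm{y-\nu^k}\geq\norm{y^k-\nu^k}-\kappa\,r(x^k,y)$ (which is legitimate because $y^k\to\bar y$ places $(x^k,y)$ inside the R-regularity neighborhood for $y$ near $y^k$), and then read off the multipliers from Fermat's rule together with the max-rule for the convex subdifferential of the residual, obtaining the slightly sharper modulus $M=\kappa$. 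Both arguments use \hyperref[ass:A1]{\textup{\textbf{(A1)}}} at the decisive step — the paper to justify the Danskin upper estimate, you to make $\varphi_k$ convex so that the local minimality upgrades to $0\in\partial_y\varphi_k(y^k)$ and the subdifferential sum/max rules apply exactly. Your version is cleaner in the convex setting; the paper's Danskin-based formulation has the advantage of paralleling the directional-derivative arguments used in the non-convex antecedents it cites. Two cosmetic points: nonemptiness of $\Gamma(x^k)$ in your $\text{(b)}\Rightarrow\text{(a)}$ step comes from $x^k\in\dom\Gamma$ rather than from \hyperref[ass:A2]{\textup{\textbf{(A2)}}}, and it is worth stating explicitly that the subdifferential of $|h_j(x^k,\cdot)|$ at a zero of the affine function $h_j(x^k,\cdot)$ is the segment $[-1,1]\,\nabla_yh_j(x^k,y^k)$, which is exactly what makes the coefficient bound $\sum_{i=1}^p|\lambda_i|\leq\kappa$ fall out.
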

\begin{proof}
	We show both implications separately.\\
	$\text{(a)}\Longrightarrow\text{(b)}$: 
		Let $\Gamma$ be R-regular at $(\bar x,\bar y)$ w.r.t.\ $\dom\Gamma$.
		Then we find $\kappa>0$ and $\gamma,\delta>0$ such that
		\begin{equation}\label{eq:R_regularity_with_explicit_neighborhoods}
			\begin{aligned}
			&\forall x\in \mathbb U_\gamma(\bar x)\cap\dom \Gamma\,
			\forall y\in\mathbb U_\delta(\bar y)\colon\\
			&\qquad
			\dist(y,\Gamma(x))\leq
			\kappa\,
			\max\bigl\{	0,
					\max\{h_i(x,y)\,|\,i\in I\},
					\max\{|h_i(x,y)|\,|\,i\in J\}
				\bigr\}
			\end{aligned} 
		\end{equation}
		holds. 
		Furthermore, let $\{x^k\}_{k\in\N}\subset\dom\Gamma$, $\{\nu^k\}_{k\in\N}\subset\R^m$, and
		$\{y^k\}_{k\in\N}\subset\R^m$ be sequences which satisfy the requirements in (b).
		We first show $y^k\to\bar y$. 
		Indeed, we have
		\[
			\norm{y^k-\bar y}
			\leq
			\norm{y^k-\nu^k}+\norm{\nu^k-\bar y}
			=
			\dist(\nu^k,\Gamma(x^k))+\norm{\nu^k-\bar y},
		\]
		and the term on the right tends to zero as $k\to\infty$ by R-regularity of $\Gamma$ at
		$(\bar x,\bar y)$ and continuity of $h_1,\ldots,h_p$ at $(\bar x,\bar y)$.
		
		Fix $k\in\N$ and define mappings $\Phi_k,\Psi_k\colon\R^m\to\R$ by means of
		\[
			\forall w\in\R^m\colon\quad
			\Phi_k(w):=\norm{w-\nu^k},
			\qquad
			\Psi_k(w):=\Phi_k(w)+2\dist(w,\Gamma(x^k)).
		\]
		Observing that $\Phi_k$ is globally Lipschitz continuous with Lipschitz modulus
		$1$ while $\Gamma(x^k)$ is nonempty and closed, Clarke's principle of
		exact penalization, see \cite[Proposition~2.4.3]{Clarke1983},
		implies that $y^k$ is a global minimizer of $\Psi_k$.
		
		For sufficiently large $k\in\N$, we have $x^k\in\mathbb U_\gamma(\bar x)$ and
		$y^k\in\mathbb U_{\delta/2}(\bar y)$. Consider such $k\in\N$
		and an arbitrary vector $w\in\mathbb U_{\delta/2}(y_k)$.
		Then the above considerations and \eqref{eq:R_regularity_with_explicit_neighborhoods}
		yield the estimate
		\begin{align*}
			\Phi_k(y^k)
			&=
			\Psi_k(y^k)
			\leq
			\Psi_k(w)
			=
			\Phi_k(w)+2\,\dist(w,\Gamma(x^k))\\
			&\leq
			\Phi_k(w)+2\kappa\,
				\max\bigl\{	0,
							\max\{h_i(x^k,w)\,|\,i\in I\},
							\max\{|h_i(x^k,w)|\,|\,i\in J\}
						\bigr\}\\
			&=\max
				\left\{\Phi_k(w)+\mathsmaller\sum\nolimits_{i=1}^p\lambda_ih_i(x^k,w)
					\,\middle|\,
					\begin{aligned}
						&\forall i\in I\colon\,\lambda_i\geq 0,\,\lambda_i\min\{0,h_i(x^k,w)\}=0\\
						&\mathsmaller\sum\nolimits_{i=1}^p|\lambda_i|\leq 2\kappa
					\end{aligned}
				\right\}.
		\end{align*}
		Using the function $\mathcal L_k\colon\mathbb U_{\delta/2}(y^k)\times\R^p\to\R$ and the
		set $\widetilde\Lambda_k(w)$ given by
		\begin{align*}
			\mathcal L_k(w,\lambda)
			&:=
			\Phi_k(w)+\mathsmaller\sum\nolimits_{i=1}^p\lambda_ih_i(x^k,w)\\
			\widetilde\Lambda^k(w)
			&:=
			\left\{\lambda\in\R^p\,\middle|\,
				\mathsmaller\sum\nolimits_{i=1}^p|\lambda_i|\leq 2\kappa,\,
				\forall i\in I\colon\,\lambda_i\geq 0,\,\lambda_i\min\{0,h_i(x^k,w)\}=0
			\right\}
		\end{align*}			
		for all $w\in \mathbb U_{\delta/2}(y^k)$ and $\lambda\in\R^p$, we have
		\begin{align*}
			\forall w\in\mathbb U_{\delta/2}(y^k)\colon\quad
			\Phi_k(y^k)
			\leq
			\max\bigl\{\mathcal L_k(w,\lambda)\,\bigl|\,\lambda\in\widetilde\Lambda^k(w)\bigr\}.
		\end{align*}
		By continuity of the functions $h_i(x^k,\cdot)$, $i=1,\ldots,\ell$, 
		the inclusion $\widetilde{\Lambda}^k(w)\subset\widetilde{\Lambda}^k(y^k)$
		holds for all $w\in\mathbb U_{\delta/2}(y^k)$ close enough to $y^k$ which is why we find
		$\delta'_k\in(0,\delta/2]$ such that
		\begin{equation}\label{eq:estimate_Phi_k_y_k}
			\forall w\in\mathbb U_{\delta'_k}(y^k)\colon\quad
			\Phi_k(y^k)
			\leq
			\max\bigl\{\mathcal L_k(w,\lambda)\,\bigl|\,\lambda\in\widetilde\Lambda^k(y^k)\bigr\}.
		\end{equation}
		Defining $\mathcal Q_k\colon\mathbb U_{\delta'_k}(y_k)\to\R$ by means of
		\[
			\forall w\in\mathbb U_{\delta'_k}(y^k)\colon\quad
			\mathcal Q_k(w)
			:=
			\max\bigl\{\mathcal L_k(w,\lambda)\,\bigl|\,\lambda\in\widetilde\Lambda^k(y^k)\bigr\},
		\]
		we obtain $\Phi_k(y^k)\leq\mathcal Q_k(w)$ for all $w\in\mathbb U_{\delta'_k}(y^k)$ from
		\eqref{eq:estimate_Phi_k_y_k}. Furthermore, $\Phi_k(y^k)=\mathcal Q_k(y^k)$ holds
		which is why $y^k$ is a global minimizer of $\mathcal Q_k$.
		For sufficiently large $k\in\N$, $\mathcal L_k$ is continuously differentiable at $y^k$.
		Noting that $\widetilde\Lambda^k(y^k)$ is a compact polyhedron, $\mathcal Q_k$ is
		directionally differentiable at $y^k$, and the directional derivative 
		can be approximated from above by means of
		\[
			\forall d\in\R^m\colon\quad
			\mathcal Q'_k(y^k,d)
			\leq
			\max\bigl\{\nabla_y\mathcal L_k(y^k,\lambda)^\top d\,\bigl|\,
				\lambda\in\widetilde\Lambda^k(y^k)\bigr\}
		\]
		which follows from Danskin's theorem, see \cite[Proposition~B.25]{Bertsekas1999},
		due to validity of \hyperref[ass:A1]{\textup{\textbf{(A1)}}}.
		Recalling that $y^k$ is a global minimizer of $\mathcal Q_k$, we have
		$\mathcal Q_k'(y^k,d)\geq 0$ for all $d\in\R^m$. 
		Defining a polytope $P\subset\R^m$ by means of
		\[
			P:=\{\nabla_y\mathcal L_k(y^k,\lambda)\,|\,\lambda\in\widetilde\Lambda^k(y^k)\},
		\]
		we find $\max\{\xi^\top d\,|\,\xi\in P\}\geq 0$ for all $d\in\R^m$. 
		This yields $0\in P$. By definition of $P$, $\mathcal L_k$, and $\widetilde\Lambda^k$,
		$\Lambda^{2\kappa}_{\nu^k}(x^k,y^k)\neq\varnothing$ follows.
		Since the above arguments apply to all sufficiently large $k\in\N$, (b) holds.\\
		$\text{(b)}\Longrightarrow\text{(a)}$:
		Let (b) hold and assume that $\Gamma$ is not R-regular at $(\bar x,\bar y)$ w.r.t.\
		$\dom\Gamma$. Then we find sequences $\{x^k\}_{k\in\N}\subset\dom\Gamma$ and
		$\{\nu^k\}_{k\in\N}\subset\R^m$ such that $x^k\to\bar x$, $\nu^k\to\bar y$, and
		\begin{equation}\label{eq:not_R_regular}
			\dist(\nu^k,\Gamma(x^k))\geq
			k\,
			\max\bigl\{	0,
						\max\{h_i(x^k,\nu^k)\,|\,i\in I\},
						\max\{|h_i(x^k,\nu^k)|\,|\,i\in J\}
				\bigr\}
		\end{equation}
		as well as $\nu^k\notin\Gamma(x^k)$ hold for all $k\in\N$. 
		For each $k\in\N$, we fix $y^k\in\Pi(\nu^k,\Gamma(x^k))$.
		Due to validity of (b), the set $\Lambda^M_{\nu^k}(x^k,y^k)$ is nonempty
		for sufficiently large $k\in\N$.
		By means of 
		\hyperref[ass:A1]{\textup{\textbf{(A1)}}} and \hyperref[ass:A2]{\textup{\textbf{(A2)}}},
		\cref{lem:characterization_of_R-regularity} yields a contradiction since
		\eqref{eq:sequential_bound} and \eqref{eq:not_R_regular} are incongruous.
\end{proof}

\subsection{R-regularity under RCPLD}

In this section, we want to exploit the sequential characterization of R-regularity
obtained in \cref{thm:characterization_R_regularity_convexity} in order to
show that validity of RCPLD is a sufficient criterion for R-regularity in the presence
of \hyperref[ass:A1]{\textup{\textbf{(A1)}}} and \hyperref[ass:A2]{\textup{\textbf{(A2)}}}.
This generalizes \cite[Theorem~4.2]{BednarczukMinchenkoRutkowski2019} and \cite[Theorem~4]{MinchenkoStakhovski2011b} where a-priori inner semicontinuity of $\Gamma$ at the
reference point as well as RCRCQ were the necessary ingredients to come up with
a related result in the absence of \hyperref[ass:A1]{\textup{\textbf{(A1)}}}
and \hyperref[ass:A2]{\textup{\textbf{(A2)}}}.
\begin{theorem}\label{thm:R_regularity_and_RCPLD}
	Let \hyperref[ass:A1]{\textup{\textbf{(A1)}}} and \hyperref[ass:A2]{\textup{\textbf{(A2)}}} hold.
	Suppose that RCPLD holds at each point from $\{\bar x\}\times\Gamma(\bar x)$ w.r.t.\ $\dom\Gamma$.
	Then $\Gamma$ is R-regular at each point from $\{\bar x\}\times\Gamma(\bar x)$ w.r.t.\ $\dom\Gamma$.
\end{theorem}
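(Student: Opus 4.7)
My plan is to invoke the sequential characterization \cref{thm:characterization_R_regularity_convexity}(b): it suffices to exhibit a constant $M>0$ with $\Lambda^M_{\nu^k}(x^k,y^k)\neq\varnothing$ for sufficiently large $k$ along any admissible triple of sequences. I would argue by contradiction: if no such $M$ works, a diagonal extraction produces sequences $x^k\to\bar x$, $\nu^k\to\bar y$, $y^k\in\Pi(\nu^k,\Gamma(x^k))$ with $\nu^k\notin\Gamma(x^k)$ such that every KKT multiplier of the projection problem $\min_y\{\norm{y-\nu^k}\mid y\in\Gamma(x^k)\}$ at $y^k$ has $\ell^1$-norm tending to infinity. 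By \hyperref[ass:A2]{\textup{\textbf{(A2)}}}, $\{y^k\}$ is bounded; after passing to a subsequence, $y^k\to\tilde y\in\Gamma(\bar x)$, and by continuity $I(x^k,y^k)\subset I(\bar x,\tilde y)$ for large $k$. Since RCPLD at $(\bar x,\tilde y)$ propagates to a neighborhood and is itself a constraint qualification, some $\lambda^k\in\Lambda_{\nu^k}(x^k,y^k)$ exists for all large $k$.

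The heart of the argument is to replace $\lambda^k$ by a structurally controlled multiplier using RCPLD together with the Carathéodory-type \cref{lem:Caratheodory_lemma}. Let $S\subset J$ be the index set from \cref{def:constraint_qualifications}(d) for $(\bar x,\tilde y)$. The constant-rank property (ii) lets me rewrite the $J$-part of $\lambda^k$ in terms of the linearly independent $S$-gradients at $(x^k,y^k)$, and then \cref{lem:Caratheodory_lemma}, applied to the resulting representation of $v^k:=-(y^k-\nu^k)/\norm{y^k-\nu^k}\neq 0$ with the $S$-block as the leading linearly independent family, yields $\bar\lambda^k\in\Lambda_{\nu^k}(x^k,y^k)$ supported on $S\cup\mathcal I^k$ for some $\mathcal I^k\subset I(x^k,y^k)$, with $\bar\lambda^k_i>0$ for $i\in\mathcal I^k$ and $(\nabla_yh_i(x^k,y^k))_{i\in S\cup\mathcal I^k}$ linearly independent at $(x^k,y^k)$. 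Because $\mathcal I^k\subset I(\bar x,\tilde y)$ and $I(\bar x,\tilde y)$ is finite, I thin down to a subsequence along which $\mathcal I^k\equiv\mathcal I$ is constant.

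The main obstacle is to rule out unboundedness of $\{\bar\lambda^k\}$, which will contradict the standing assumption that every multiplier blows up. I plan to argue again by contradiction: setting $M_k:=\norm{\bar\lambda^k}_1\to\infty$ and normalizing $\hat\lambda^k:=\bar\lambda^k/M_k$, I extract a further subsequence so that $\hat\lambda^k\to\hat\lambda$ with $\norm{\hat\lambda}_1=1$ and $\hat\lambda_i\geq 0$ for $i\in\mathcal I$. Dividing the KKT identity by $M_k$ makes the $v^k/M_k$ term vanish, and passing to the limit yields
\[
    \mathsmaller\sum\nolimits_{i\in S\cup\mathcal I}\hat\lambda_i\,\nabla_yh_i(\bar x,\tilde y)=0.
\]
Since $(\nabla_yh_i(\bar x,\tilde y))_{i\in S}$ is a basis by condition (i) of \cref{def:constraint_qualifications}(d), hence linearly independent, the $\mathcal I$-contribution cannot vanish, so some $\hat\lambda_i>0$ for $i\in\mathcal I$. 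Thus the pair $\bigl((\nabla_yh_i(\bar x,\tilde y))_{i\in\mathcal I},(\nabla_yh_i(\bar x,\tilde y))_{i\in S}\bigr)$ is positive-linearly dependent, and property (iii) of RCPLD — applied with $K:=\mathcal I\subset I(\bar x,\tilde y)$ — forces $(\nabla_yh_i(x^k,y^k))_{i\in \mathcal I\cup S}$ to be linearly dependent at every nearby $(x^k,y^k)$, contradicting the linear independence produced in the Carathéodory step. Hence $\{\bar\lambda^k\}$ stays bounded, which supplies the uniform $M$ required by \cref{thm:characterization_R_regularity_convexity}(b) and thereby yields R-regularity of $\Gamma$ at $(\bar x,\bar y)$ w.r.t.\ $\dom\Gamma$; since $\bar y\in\Gamma(\bar x)$ was arbitrary, the claim follows.
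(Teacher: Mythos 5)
Your proposal is correct and follows essentially the same route as the paper's proof: the sequential characterization of \cref{thm:characterization_R_regularity_convexity} plus a diagonal argument, multiplier existence via the local stability of RCPLD, the rewriting of the equality-multipliers through the basis indexed by $S$, the Carath\'eodory-type reduction of \cref{lem:Caratheodory_lemma}, and finally the normalization limit producing a positive-linearly dependent pair that clashes with condition (iii) of RCPLD and the linear independence guaranteed by the Carath\'eodory step. The only cosmetic difference is that you phrase the last step as a boundedness claim for $\{\bar\lambda^k\}$ contradicting the standing blow-up assumption, whereas the paper notes directly that $\norm{\bar\lambda^\sigma}\to\infty$ and derives the same contradiction.
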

\begin{proof}
	Suppose that there exists $\tilde y\in\Gamma(\bar x)$ such that $\Gamma$ is not
	R-regular at $(\bar x,\tilde y)$ w.r.t.\ $\dom\Gamma$.
	Due to \cref{thm:characterization_R_regularity_convexity}, 
	this shows that for each $\sigma\in\N$, there exist sequences 
	$\{x^k_\sigma\}_{k\in\N}\subset\dom\Gamma$,
	$\{\nu^k_\sigma\}_{k\in\N}\subset\R^m$,
	and $\{y^k_\sigma\}_{k\in\N}\subset\R^m$
	satisfying $x^k_\sigma\to\bar x$, $\nu^k_\sigma\to\tilde y$, $\nu^k_\sigma\notin\Gamma(x^k_\sigma)$
	as well as $y^k_\sigma\in\Pi(\nu^k_\sigma,\Gamma(x^k_\sigma))$ for all $k\in\N$, and
	$\Lambda^\sigma_{\nu^{k_s}_\sigma}(x^{k_s}_\sigma,y^{k_s}_\sigma)=\varnothing$ for all $s\in\N$, 
	i.e., the latter holds at least on a subsequence.
	Performing a standard diagonal sequence argument, we, thus, find sequences 
	$\{x^\sigma\}_{\sigma\in\N}\subset\dom\Gamma$,
	$\{\nu^\sigma\}_{\sigma\in\N}\subset\R^m$,
	and $\{y^\sigma\}_{\sigma\in\N}$ satisfying
	$x^\sigma\to\bar x$, $\nu^\sigma\to\tilde y$, as well as
	$\nu^\sigma\notin\Gamma(x^\sigma)$, $y^\sigma\in\Pi(\nu^\sigma,\Gamma(x^\sigma))$,
	and $\Lambda^\sigma_{\nu^\sigma}(x^\sigma,y^\sigma)=\varnothing$ for all $\sigma\in\N$.
	
	Invoking \hyperref[ass:A2]{\textup{\textbf{(A2)}}} and the continuity of
	$h_1,\ldots,h_p$ at each point from $\{\bar x\}\times\Gamma(\bar x)$, we
	obtain that for each $\varepsilon>0$, there is a $\delta>0$ such that
	$\Gamma(x)\subset\Gamma(\bar x)+\varepsilon\mathbb B$ holds for all $x\in\mathbb U_\delta(\bar x)$
	since $\Gamma$ is upper semicontinuous at $\bar x$,
	see \cite[Theorem~5.19]{RockafellarWets1998} as well.
	Thus, recalling that RCPLD is locally stable, it needs to hold at the
	points $(x^\sigma,y^\sigma)$ for sufficiently large $\sigma\in\N$.
	Exploiting the fact that RCPLD is, actually, a constraint qualification,
	this implies $\Lambda_{\nu^\sigma}(x^\sigma,y^\sigma)\neq\varnothing$.
	Since we have $\Lambda^\sigma_{\nu^\sigma}(x^\sigma,y^\sigma)=\varnothing$
	from above, we conclude that each sequence $\{\mu^\sigma\}_{\sigma\in\N}$
	with $\mu^\sigma\in\Lambda_{\nu^\sigma}(x^\sigma,y^\sigma)$ for
	all $\sigma\in\N$ satisfies $\norm{\mu^\sigma}\to\infty$ as $\sigma\to\infty$.
	Choose such a sequence. Recall that this means
	\begin{subequations}\label{eq:mu_sequence}
		\begin{align}
			\label{eq:mu_sequence_Lagrangian_derivative}
				&0=\frac{y^\sigma-\nu^\sigma}{\norm{y^\sigma-\nu^\sigma}}
				+\mathsmaller\sum\nolimits_{i=1}^p\mu^\sigma_i\nabla_yh_i(x^\sigma,y^\sigma),\\
			\label{eq:mu_sequence_complementarity_slackness_1}
				&\forall i\in I(x^\sigma,y^\sigma)\colon\,\mu^\sigma_i\geq 0,\\
			\label{eq:mu_sequence_complementarity_slackness_2}
				&\forall i\in I\setminus I(x^\sigma,y^\sigma)\colon\,\mu^\sigma_i=0
		\end{align}
	\end{subequations}
	for all $\sigma\in\N$ sufficiently large.
	
	Clearly, \hyperref[ass:A2]{\textup{\textbf{(A2)}}} guarantees that $\{y^\sigma\}_{\sigma\in\N}$
	is locally bounded and, thus, converges along a subsequence (without relabeling) to some
	$\bar y\in\Gamma(\bar x)$ by continuity of $h_1,\ldots,h_p$ at each point from
	$\{\bar x\}\times\Gamma(\bar x)$. Since RCPLD holds at $(\bar x,\bar y)$ w.r.t.\
	$\dom\Gamma$, we find a neighborhood $U$ of this point as well as an
	index set $S\subset J$ satisfying the requirements (i), (ii), and (iii) from part (d) of
	\cref{def:constraint_qualifications}. Particularly, the family $(\nabla_yh_i(x,y))_{i\in S}$
	needs to be linearly independent while the vectors
	from $(\nabla_yh_i(x,y))_{i\in J\setminus S}$ need to be linearly
	dependent on the family $(\nabla_yh_i(x,y))_{i\in S}$ for all 
	$(x,y)\in U\cap(\dom\Gamma\times\R^m)$.
	For sufficiently large $\sigma\in\N$, $(x^\sigma,y^\sigma)\in U\cap(\dom\Gamma\times\R^m)$
	holds true. The above arguments lead to the existence of $\bar\mu^\sigma_i$, $i\in J$, such that
	\begin{subequations}\label{eq:mu_sequence_modified}
		\begin{align}
			\label{eq:mu_sequence_modified_gradients}
				&\mathsmaller\sum\nolimits_{i\in J}\mu^\sigma_i\nabla_yh_i(x^\sigma,y^\sigma)
				=\mathsmaller\sum\nolimits_{i\in S}\bar\mu^\sigma_i\nabla_yh_i(x^\sigma,y^\sigma),\\
			\label{eq:mu_sequence_modified_vanishing_components}
				&\forall i\in J\setminus S\colon\quad \bar\mu^\sigma_i=0
		\end{align}
	\end{subequations}
	holds for sufficiently large $\sigma\in\N$ where, additionally, the family 
	$(\nabla_yh_i(x^\sigma,y^\sigma))_{i\in S}$ is linearly independent.
	Now, \eqref{eq:mu_sequence_modified_gradients} allows to rewrite
	\eqref{eq:mu_sequence_Lagrangian_derivative} as
	\[
		0=\frac{y^\sigma-\nu^\sigma}{\norm{y^\sigma-\nu^\sigma}}
			+\mathsmaller\sum\nolimits_{i\in I}\mu^\sigma_i\nabla_yh_i(x^\sigma,y^\sigma)
			+\mathsmaller\sum\nolimits_{i\in S}\bar\mu^\sigma_i\nabla_yh_i(x^\sigma,y^\sigma)
	\]
	for sufficiently large $\sigma\in\N$.
	Observing that there are only finitely many subsets of $I$, we
	may pass to a subsequence (without relabeling) in order to guarantee 
	$I(x^\sigma,y^\sigma)=\mathfrak I$ for all $\sigma\in\N$ and some set $\mathfrak I\subset I$.
	Now, we apply \cref{lem:Caratheodory_lemma} to the situation at hand.
	Thus, for each sufficiently large $\sigma\in\N$, we find a set $I^\sigma\subset\mathfrak I$
	as well as reals $\bar\lambda^\sigma_i$, $i\in I^\sigma\cup S$, satisfying 
	$\bar\lambda^\sigma_i>0$ for all $i\in I^\sigma$, such that the family
	$(\nabla_yh_i(x^\sigma,y^\sigma))_{i\in I^\sigma\cup S}$ is linearly independent
	while
	\[
		0=\frac{y^\sigma-\nu^\sigma}{\norm{y^\sigma-\nu^\sigma}}
			+\mathsmaller\sum\nolimits_{i\in I^\sigma\cup S}
				\bar\lambda^\sigma_i\nabla_yh_i(x^\sigma,y^\sigma)
	\]
	holds for all $\sigma\in\N$. By passing once more to a subsequence (without
	relabeling), we may ensure that $I^\sigma=\mathcal I$ holds for all $\sigma\in\N$
	and some index set $\mathcal I\subset I$. 
	Let us set $\bar\lambda^\sigma_i:=0$ for all $i\in(I\setminus\mathcal I)\cup(J\setminus S)$
	in order to rewrite the above equation as
	\begin{equation}\label{eq:lambda_sequence_Lagrangian_derivative}
		0=\frac{y^\sigma-\nu^\sigma}{\norm{y^\sigma-\nu^\sigma}}
			+\mathsmaller\sum\nolimits_{i=1}^p
				\bar\lambda^\sigma_i\nabla_yh_i(x^\sigma,y^\sigma).
	\end{equation}
	Thus, we have shown $\bar\lambda^\sigma\in\Lambda_{\nu^\sigma}(x^\sigma,y^\sigma)$.
	The above arguments show $\norm{\bar\lambda^\sigma}\to\infty$ as
	$\sigma\to\infty$. Consequently, dividing \eqref{eq:lambda_sequence_Lagrangian_derivative}
	by $\norm{\bar\lambda^\sigma}$ and taking the limit $\sigma\to\infty$, we infer
	\begin{align*}
		&0=\mathsmaller\sum\nolimits_{i=1}^p
				\bar\lambda_i\nabla_yh_i(\bar x,\bar y),\\
		&\forall i\in\mathcal I\colon\,\bar\lambda_i\geq 0,\\
		&\forall i\in (I\setminus\mathcal I)\cup(J\setminus S)\colon\,\bar\lambda_i=0
	\end{align*}
	for some non-vanishing multiplier $\bar\lambda\in\R^p$ by the assumed continuity of
	the derivatives $\nabla_yh_1,\ldots,\nabla_yh_p$ at $(\bar x,\bar y)$.
	Thus, the pair of families 
	$\bigl((\nabla_yh_i(\bar x,\bar y))_{i\in\mathcal I},(\nabla_yh_i(\bar x,\bar y))_{i\in S}\bigr)$
	is positive-linearly dependent. On the other hand, we have already shown above that
	the families $(\nabla_yh_i(x^\sigma,y^\sigma))_{i\in\mathcal I\cup S}$ are linearly
	independent. This, however, contradicts the validity of RCPLD at $(\bar x,\bar y)$
	and, thus, completes the proof.	
\end{proof}

As a consequence of the above theorem and \cref{lem:inner_semicontinuity_via_R_regularity}, 
we obtain the following corollary.
\begin{corollary}\label{cor:lower_semicontinuity_of_Gamma}
	Let the assumptions of \cref{thm:R_regularity_and_RCPLD} hold.
	Then $\Gamma$ is lower semicontinuous w.r.t.\ $\dom\Gamma$ at $\bar x$.
\end{corollary}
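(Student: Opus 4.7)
The plan is to chain together three ingredients that are already on the table: Theorem \ref{thm:R_regularity_and_RCPLD}, Lemma \ref{lem:inner_semicontinuity_via_R_regularity}, and the characterization of lower semicontinuity in terms of pointwise inner semicontinuity stated immediately after the definition of inner semicontinuity in \cref{sec:set_valued_analysis}.

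First, I would fix an arbitrary $\bar y \in \Gamma(\bar x)$ and invoke \cref{thm:R_regularity_and_RCPLD}, whose hypotheses are exactly those carried by the corollary, to conclude that $\Gamma$ is R-regular at $(\bar x, \bar y)$ w.r.t.\ $\dom \Gamma$. Next, I would verify that the continuity hypotheses of \cref{lem:inner_semicontinuity_via_R_regularity} are met at $(\bar x,\bar y)$: continuity of $h_1,\ldots,h_p$ at this point is guaranteed by \cref{ass:differentiability_properties} (in force throughout \cref{sec:sufficient_conditions_for_R_regularity}), and continuity of $h_i(x,\cdot)$ on $\R^m$ for each $x\in\dom\Gamma$ in a neighborhood of $\bar x$ is the standing hypothesis stated at the opening of \cref{sec:sufficient_conditions_for_R_regularity}. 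Applying \cref{lem:inner_semicontinuity_via_R_regularity} then yields inner semicontinuity of $\Gamma$ at $(\bar x,\bar y)$ w.r.t.\ $\dom\Gamma$.

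Since $\bar y \in \Gamma(\bar x)$ was arbitrary, $\Gamma$ is inner semicontinuous w.r.t.\ $\dom\Gamma$ at every point of $\{\bar x\}\times \Gamma(\bar x)$. The equivalence recorded in \cref{sec:set_valued_analysis}, namely that a set-valued mapping is lower semicontinuous at $\bar x$ w.r.t.\ $\Omega$ if and only if it is inner semicontinuous w.r.t.\ $\Omega$ at each point of $\{\bar x\}\times\Gamma(\bar x)$, then delivers the desired conclusion.

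Since every nontrivial ingredient has already been proved in the paper, there is essentially no obstacle here; the only thing to be careful about is recalling why the continuity hypotheses of \cref{lem:inner_semicontinuity_via_R_regularity} are automatically available in the present setting, so that the appeal is clean. The proof itself should fit in a few lines.
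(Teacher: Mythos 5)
Your proposal is correct and follows exactly the route the paper intends: \cref{thm:R_regularity_and_RCPLD} gives R-regularity at every point of $\{\bar x\}\times\Gamma(\bar x)$, \cref{lem:inner_semicontinuity_via_R_regularity} upgrades this to inner semicontinuity at each such point, and the stated equivalence between lower semicontinuity and pointwise inner semicontinuity finishes the argument. The paper leaves this proof implicit for precisely these reasons, and your care in checking the continuity hypotheses of the lemma is appropriate but, as you note, automatic under the standing assumptions of the section.
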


Inspecting the proofs of \cref{lem:characterization_of_R-regularity} as well as
\cref{thm:characterization_R_regularity_convexity,thm:R_regularity_and_RCPLD},
one can check that continuity of all involved functions w.r.t.\ the set $\dom\Gamma\times\R^m$
is actually enough in order to proceed.  
A remark, which provides another slight generalization of our setting, is presented below.
\begin{remark}\label{rem:some_more_discontinuities}
	Observe that the proofs of \cref{lem:characterization_of_R-regularity} 
	as well as 
	\cref{thm:characterization_R_regularity_convexity,thm:R_regularity_and_RCPLD} 
	remain true
	in the following setting which is slightly more general than the one
	of \cref{ass:differentiability_properties}: For each $i\in\{1,\ldots,p\}$,
	there exist functions $g_i\colon\R^n\times\R^m\to\R$ and $t_i\colon\R^n\to\overline\R$
	such that $h_i(x,y)=g_i(x,y)+t_i(x)$ holds true for all $(x,y)\in\R^n\times\R^m$.
	Furthermore, $g_i$ is continuous as well as continuously differentiable
	w.r.t.\ $y$ in a neighborhood of $\{\bar x\}\times\Gamma(\bar x)$. 
	Finally, we have $|t_i(x)|<\infty$ for all $x\in\dom\Gamma$ from a neighborhood
	of $\bar x$ and $t_i$ is continuous at $\bar x$. 
\end{remark}

Observe that the assertion of \cref{thm:R_regularity_and_RCPLD} is essentially
different from the one of \cite[Theorem~4.2]{BednarczukMinchenkoRutkowski2019}.
In \cite{BednarczukMinchenkoRutkowski2019}, the authors claimed validity of
inner semicontinuity and RCRCQ at \emph{one} point from the graph of $\Gamma$
in order to obtain R-regularity at the reference point. Here, however, we
postulate \hyperref[ass:A1]{\textup{\textbf{(A1)}}} and assume validity of RCPLD
at \emph{all} points from $\{\bar x\}\times\Gamma(\bar x)$ in order to deduce
R-regularity of $\Gamma$ at \emph{all} these points. Thus, in this setting, one
may interpret the statement of \cref{thm:R_regularity_and_RCPLD} as a
sufficient condition for lower semicontinuity of $\Gamma$ as well, see 
\cref{cor:lower_semicontinuity_of_Gamma}. Observe that we cannot modify
the statement of \cref{thm:R_regularity_and_RCPLD} in such a way that assuming
validity of RCPLD at one reference point $(\bar x,\bar y)\in\gph\Gamma$ ensures
R-regularity of $\Gamma$ at the same point without adding inner semicontinuity
of $\Gamma$ at $(\bar x,\bar y)$ while relying on the provided proof. 
However, we obtain the following result
which generalizes \cite[Theorem~4.2]{BednarczukMinchenkoRutkowski2019}.
\begin{theorem}\label{thm:R_regularity_and_RCPLD_with_inner_semicontinuity}
	Assume that $\Gamma$ is inner semicontinuous at $(\bar x,\bar y)$ w.r.t.\ $\dom\Gamma$
	and let RCPLD hold at this point w.r.t.\ $\dom\Gamma$.
	Then $\Gamma$ is R-regular at $(\bar x,\bar y)$ w.r.t.\ $\dom\Gamma$.
\end{theorem}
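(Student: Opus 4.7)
The plan is to argue by contradiction in the same overall pattern as the proof of \cref{thm:R_regularity_and_RCPLD}, but since \hyperref[ass:A1]{\textup{\textbf{(A1)}}} and \hyperref[ass:A2]{\textup{\textbf{(A2)}}} are no longer available, I cannot invoke the sequential characterization of \cref{thm:characterization_R_regularity_convexity}. The role played there by the convexity inequality of \cref{lem:characterization_of_R-regularity} will be replaced by a direct Taylor-expansion argument that works under mere $C^1$-smoothness in $y$.

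First, suppose R-regularity fails and extract sequences $\{x^k\}\subset\dom\Gamma$ and $\{\nu^k\}\subset\R^m$ with $(x^k,\nu^k)\to(\bar x,\bar y)$, $\nu^k\notin\Gamma(x^k)$, and $\dist(\nu^k,\Gamma(x^k))>kM_k$, where $M_k:=\max\{0,\max_{i\in I}h_i(x^k,\nu^k),\max_{i\in J}|h_i(x^k,\nu^k)|\}$. Inner semicontinuity produces $\tilde y^k\in\Gamma(x^k)$ with $\tilde y^k\to\bar y$, so $\dist(\nu^k,\Gamma(x^k))\leq\|\nu^k-\tilde y^k\|\to 0$; closedness of $\Gamma(x^k)$ then gives $y^k\in\Pi(\nu^k,\Gamma(x^k))$ with $y^k\to\bar y$. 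By stability of RCPLD, this condition is valid at $(x^k,y^k)$ for large $k$; since RCPLD is a constraint qualification for the projection problem $\min\{\|y-\nu^k\|\,|\,y\in\Gamma(x^k)\}$, KKT multipliers $\mu^k\in\Lambda_{\nu^k}(x^k,y^k)$ exist.

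The key new step is to show $\|\mu^k\|\to\infty$. Assume a bounded subsequence with $\|\mu^k\|\leq C$. From the KKT identity I take the inner product with $\nu^k-y^k$ and get $\|y^k-\nu^k\|=\sum_i\mu^k_i\nabla_yh_i(x^k,y^k)^\top(\nu^k-y^k)$. A first-order expansion, using continuity of $\nabla_yh_i$ near $(\bar x,\bar y)$ and complementary slackness $\mu^k_ih_i(x^k,y^k)=0$ for $i\in I$ together with $h_i(x^k,y^k)=0$ for $i\in J$, yields
\[
\|y^k-\nu^k\|\;\leq\;\mathsmaller\sum\nolimits_i|\mu^k_i|M_k+\delta_k\|y^k-\nu^k\|
\]
with $\delta_k\to 0$, hence $\|y^k-\nu^k\|\leq 2pCM_k$ for large $k$. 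This contradicts $\dist(\nu^k,\Gamma(x^k))>kM_k$, proving divergence of $\|\mu^k\|$. I anticipate this Taylor step to be the main obstacle, since it is exactly the place where the earlier proof leaned on \hyperref[ass:A1]{\textup{\textbf{(A1)}}}, and uniform control of the remainder across the finitely many indices has to be handled explicitly.

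Once $\|\mu^k\|\to\infty$ is in hand, the remainder follows the pattern of \cref{thm:R_regularity_and_RCPLD}: pick the basis $S\subset J$ from RCPLD at $(\bar x,\bar y)$, rewrite the $J$-sum as a combination over $S$, stabilize $I(x^k,y^k)=\mathfrak I$ along a subsequence, and apply \cref{lem:Caratheodory_lemma} to extract sparse coefficients $\bar\lambda^k$ with $(\nabla_yh_i(x^k,y^k))_{i\in\mathcal I\cup S}$ linearly independent and $\bar\lambda^k_i>0$ for $i\in\mathcal I\subset\mathfrak I$. These $\bar\lambda^k$ again belong to $\Lambda_{\nu^k}(x^k,y^k)$, so the linearization argument above forces $\|\bar\lambda^k\|\to\infty$. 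Dividing the KKT identity by $\|\bar\lambda^k\|$, passing to the limit, and using continuity of the gradients together with $\mathcal I\subset I(\bar x,\bar y)$ produces a nonzero $\bar\lambda$ exhibiting positive-linear dependence of $\bigl((\nabla_yh_i(\bar x,\bar y))_{i\in\mathcal I},(\nabla_yh_i(\bar x,\bar y))_{i\in S}\bigr)$, while linear independence of $(\nabla_yh_i(x^k,y^k))_{i\in\mathcal I\cup S}$ for every large $k\in\N$ with $x^k\in\dom\Gamma$ contradicts clause~(iii) of the RCPLD hypothesis at $(\bar x,\bar y)$.
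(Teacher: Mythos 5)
Your proof is correct and its skeleton coincides with the paper's: argue by contradiction, take projections, obtain multipliers from the local stability of RCPLD and the fact that it is a constraint qualification, reduce via \cref{lem:Caratheodory_lemma}, normalize, and contradict clause (iii) of RCPLD at $(\bar x,\bar y)$. The one place where you genuinely diverge is how the divergence of the multipliers is established. The paper disposes of this by invoking the sequential characterization of R-regularity under inner semicontinuity, citing \cite[Theorem~3.2]{BednarczukMinchenkoRutkowski2019} as a drop-in replacement for \cref{thm:characterization_R_regularity_convexity}, whose own proof needed \hyperref[ass:A1]{\textup{\textbf{(A1)}}} and \hyperref[ass:A2]{\textup{\textbf{(A2)}}}; you instead prove the needed implication from scratch by a mean-value expansion of $h_i(x^k,\cdot)$ between $y^k$ and $\nu^k$, using complementary slackness and $h_i(x^k,y^k)=0$ for $i\in J$ to reduce to $h_i(x^k,\nu^k)$, and uniform continuity of $\nabla_yh_i$ near $(\bar x,\bar y)$ to control the remainder $\delta_k\to 0$. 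This is precisely the non-convex analogue of \cref{lem:characterization_of_R-regularity} and it is sound: the segment $[y^k,\nu^k]$ eventually lies in the neighborhood where $h_i$ is continuously differentiable in $y$ because both endpoints converge to $\bar y$ (which itself uses inner semicontinuity to get $\dist(\nu^k,\Gamma(x^k))\to 0$ and hence $y^k\to\bar y$), and the resulting bound $\norm{y^k-\nu^k}\le 2pCM_k$ contradicts $\dist(\nu^k,\Gamma(x^k))>kM_k$ for large $k$ (and for $M_k=0$ it even forces $\nu^k\in\Gamma(x^k)$). What your route buys is self-containedness: you avoid both the external reference and the diagonal-sequence construction of \cref{thm:R_regularity_and_RCPLD}, at the price of carrying the uniform remainder estimate explicitly; the paper's route is shorter on the page but shifts exactly that linearization work into the cited result.
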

\begin{proof}
	We follow the lines of the proof of \cref{thm:R_regularity_and_RCPLD}
	while respecting the following changes: First, the role of
	$\tilde y$ is played by $\bar y$. Second, 
	inner semicontinuity of $\Gamma$ at $(\bar x,\bar y)$ w.r.t.\ $\dom\Gamma$ ensures
	validity of the sequential characterization of R-regularity from
	\cref{thm:characterization_R_regularity_convexity} in the absence
	of \hyperref[ass:A1]{\textup{\textbf{(A1)}}} and \hyperref[ass:A2]{\textup{\textbf{(A2)}}},
	see \cite[Theorem~3.2]{BednarczukMinchenkoRutkowski2019}.
	Third, inner semicontinuity
	of $\Gamma$ at $(\bar x,\bar y)$ w.r.t.\ $\dom\Gamma$ can be used to infer the convergence
	$y^\sigma\to\bar y$ without presuming validity of \hyperref[ass:A2]{\textup{\textbf{(A2)}}}.
	Fourth, the relation $\Lambda_{\nu^\sigma}(x^\sigma,y^\sigma)\neq\varnothing$ follows
	for sufficiently large $\sigma\in\N$ directly from local stability of RCPLD.
\end{proof}

Let us note that all the assumptions of \cref{thm:R_regularity_and_RCPLD_with_inner_semicontinuity}
hold whenever MFCQ is valid at $(\bar x,\bar y)\in\gph\Gamma$. 
In this case, $\Gamma$ is inner semicontinuous
at $(\bar x,\bar y)$, see \cite[Section~III]{Fiacco1983} for a demonstration, which particularly
yields that $\bar x$ is an interior point of $\dom\Gamma$, i.e., 
$\Gamma$ is R-regular at $(\bar x,\bar y)$ in this case.
Note that, on the other hand, validity of MFCQ at $(\bar x,\bar y)$ guarantees R-regularity of $\Gamma$
at this point by means of e.g.\ \cite[Theorem~3.2]{Borwein1986}. 
Due to \cref{lem:inner_semicontinuity_via_R_regularity}, this also shows that $\Gamma$ is inner
semicontinuous at $(\bar x,\bar y)$ and, thus, that $\bar x$ belongs to the interior of
$\dom\Gamma$.

Let us point out that in case where $\Gamma$ does not depend on the parameter $x$, \cref{thm:R_regularity_and_RCPLD_with_inner_semicontinuity} provides a sufficient condition 
for the presence of an error bound at some reference point
of a nonlinear constraint system. For a similar result under slightly stronger assumptions, 
we refer the interested reader to \cite[Theorem~7]{AndreaniHaeserSchuverdtSilva2012}.
Furthermore, we would like to mention \cite[Theorem~4.2]{ChieuLee2013} where this result
has been obtained in the context of mathematical problems with complementarity constraints.

Using the popular tools of \emph{directional} limiting variational analysis, 
the authors in \cite{GfrererMordukhovich2017} struck a completely different path
in order to derive first- and second-order sufficient
conditions for the R-regularity of $\Gamma$ which are also weaker than MFCQ.
However, in order to obtain a first-order sufficient condition in terms of initial problem data
from \cite[Theorem~3.5]{GfrererMordukhovich2017},
differentiability of the functions $h_1,\ldots,h_p$ w.r.t.\ the parameter 
as well as injectivity of the associated derivative seems to be necessary, 
and this is far beyond the regularity which was necessary in order to derive 
\cref{thm:R_regularity_and_RCPLD,thm:R_regularity_and_RCPLD_with_inner_semicontinuity}.

The upcoming example, which closes this section, shows that the statements of
\cref{thm:R_regularity_and_RCPLD,thm:R_regularity_and_RCPLD_with_inner_semicontinuity}
do not need to hold in the absence of the convexity assumption
\hyperref[ass:A1]{\textup{\textbf{(A1)}}}
or the inner semicontinuity of $\Gamma$ at the reference point, respectively.
\begin{example}\label{ex:necessity_of_assumptions}
	We consider the mapping $\Gamma\colon\R\tto\R$ given by
	\[
		\forall x\in\R\colon\quad
		\Gamma(x):=\{y\in\R\,|\,x-y\leq 0,\,y-y^2\leq 0,\,y-1\leq 0\}.
	\]
	A simple calculation reveals
	\[
		\forall x\in\R\colon\quad
		\Gamma(x)
		=
		\begin{cases}
			[x,0]\cup\{1\}	&x\in(-\infty,0],\\
			\{1\}			&x\in(0,1],\\
			\varnothing		&x\in(1,\infty).
		\end{cases}
	\]
	We study the point $\bar x:=0$ as well as the associated
	images $\bar y:=0$ and $\tilde y:=1$ in $\Gamma(\bar x)$.
	Note that $\Gamma$  is inner semicontinuous at $(\bar x,\tilde y)$
	but not at $(\bar x,\bar y)$. Thus, $\Gamma$ cannot
	be R-regular at $(\bar x,\bar y)$ due to \cref{lem:inner_semicontinuity_via_R_regularity}.
	
	Observe that the family $(-1,1-2y)$ 
	is positive-linearly dependent around $\bar y$ 
	while the family $(1-2y,1)$ is positive-linearly dependent
	around $\tilde y$. Thus, RCPLD is valid at $(\bar x,\bar y)$
	and $(\bar x,\tilde y)$, respectively.	
	This shows that the statement of \cref{thm:R_regularity_and_RCPLD} does not
	generally hold in the absence of \hyperref[ass:A1]{\textup{\textbf{(A1)}}}
	while the assertion of \cref{thm:R_regularity_and_RCPLD_with_inner_semicontinuity}
	is not generally true if $\Gamma$ is not inner semicontinuous at the
	reference point.
\end{example}

\section{Applications}\label{sec:applications}

\subsection{Parametric optimization}\label{sec:parametric_optimization}

For a function $f\colon\R^n\times\R^m\to\R$, we investigate the
parametric optimization problem
\begin{equation}\label{eq:parametric_optimization_problem}\tag{P$(x)$}
	\min\limits_y\{f(x,y)\,|\,y\in\Gamma(x)\}
\end{equation}
where $\Gamma\colon\R^n\tto\R^m$ is the set-valued mapping given in
\eqref{eq:definition_of_Gamma}. Associated with the problem \eqref{eq:parametric_optimization_problem}
are the solution mapping $S\colon\R^n\tto\R^m$ given by
\[
	\forall x\in\R^n\colon\quad
	S(x):=\argmin_y\{f(x,y)\,|\,y\in\Gamma(x)\}
\]
as well as the optimal value (or marginal) function $\varphi\colon\R^n\to\overline{\R}$ defined via
\[
	\forall x\in\R^n\colon\quad
	\varphi(x):=\inf_y\{f(x,y)\,|\,y\in\Gamma(x)\}.
\]
Clearly, we have the relation
\[
	\forall x\in\R^n\colon\quad
	S(x)=\{y\in\Gamma(x)\,|\,f(x,y)\leq\varphi(x)\}
\]
which is why $S$ can be interpreted as a solution mapping associated with a parametric
system of nonlinear inequalities and equations. 
It is well known that under comparatively weak assumptions, the optimal value function
$\varphi$ is continuous at a given reference point, see e.g.\ 
\cite{BankGuddatKlatteKummerTammer1983}.
Keeping \cref{rem:some_more_discontinuities} in mind, we are thus in position to apply the
theory from \cref{sec:sufficient_conditions_for_R_regularity} to this representation
of $S$ in order to infer its R-regularity at a given reference point under suitable
assumptions. This way, we also obtain new sufficient criteria for the presence of the
Aubin property of $S$ or its inner semicontinuity at a given reference point.
For the sake of brevity and consistency, we define $h_0\colon\R^n\times\R^m\to\overline{\R}$
by means of
\[
	\forall x\in\R^n\,\forall y\in\R^m\colon\quad
	h_0(x,y):=f(x,y)-\varphi(x)
\]
and emphasize that $S$ possesses the representation
\begin{equation}\label{eq:representation_of_S}
	\forall x\in\R^n\colon\quad
	S(x)
	=
	\left\{
		y\in\R^m\,\middle|\,
		\begin{aligned}
			h_i(x,y)&\leq 0&&i\in I\cup\{0\}\\
			h_i(x,y)&=0&&i\in J
		\end{aligned}
	\right\}.
\end{equation}
This representation of $S$ can be addressed with the theory from
\cref{sec:sufficient_conditions_for_R_regularity}.
In this section, we need to refer to the parametric constraint systems
induced by $\Gamma$ and $S$, individually. In this regard, we will exploit
the notation RCPLD$_\Gamma$ and RCPLD$_{S}$ in order to avoid
any confusion. 

Let us emphasize that, if not stated otherwise,
we will include the constraint function $h_0$ as an inequality
constraint when considering $S$, i.e., we exploit the representation
of $S$ from \eqref{eq:representation_of_S} in most of the cases.
However, it is also possible to incorporate $h_0$ as an
equality constraint.
\begin{remark}\label{rem:other representation_of_S}
	We also have the representation
	\[
		\forall x\in\R^n\colon\quad
		S(x)
		=
		\left\{
			y\in\R^m\,\middle|\,
			\begin{aligned}
				h_i(x,y)&\leq 0&&i\in I\\
				h_i(x,y)&=0&&i\in J\cup\{0\}
			\end{aligned}
		\right\},
	\]
	and, in some situations, it might be beneficial to apply the theory of 
	\cref{sec:sufficient_conditions_for_R_regularity}
	to this representation of $S$ instead of the one from
	\eqref{eq:representation_of_S}.
\end{remark}

We postulate the following standing assumption throughout the section.
\begin{assumption}\label{ass:parametric_optimization}
	The functions $f$ and $h_1,\ldots,h_p$ are continuously differentiable.
\end{assumption}
Note that by continuity of $h_1,\ldots,h_p$, we already know that $\gph\Gamma$ is
closed. Particularly, the image sets of $\Gamma$ are closed. By continuity of
$f$, we even know that the image sets of $S$ are closed.

Finally, we will exploit the following modified version of 
\hyperref[ass:A1]{\textup{\textbf{(A1)}}}
in some situations:
\begin{enumerate}
	[leftmargin=4em]
	\item[\textbf{(A1')}]\label{ass:A1mod} 
		For each $x\in\R^n$, the functions $f(x,\cdot)\colon\R^m\to\R$ and 
		$h_i(x,\cdot)\colon\R^m\to\R$, $i\in I$, 
		are convex while the functions $h_i(x,\cdot)\colon\R^m\to\R$, $i\in J$, are affine.
\end{enumerate}
We note that \hyperref[ass:A1mod]{\textup{\textbf{(A1')}}} is the counterpart of
\hyperref[ass:A1]{\textup{\textbf{(A1)}}} which addresses the representation
of $S$ from \eqref{eq:representation_of_S}. In case where one aims to exploit
the representation of $S$ from \cref{rem:other representation_of_S}, the convexity of
$f(x,\cdot)\colon\R^m\to\R$ for each $x\in\R^n$ has to be replaced by the
property of this mapping to be affine.

\subsubsection{Continuity properties of marginal functions}

In the subsequent lemma, we collect some results regarding the
continuity properties of the function $\varphi$. 
The proof is stated for the reader's convenience.
\begin{lemma}\label{lem:continuity_of_varphi}
	Fix a point $\bar x\in\dom\Gamma$ where \hyperref[ass:A2]{\textup{\textbf{(A2)}}} is
	valid. Then the following assertions hold.
	\begin{enumerate}
		\item[(a)] The function $\varphi$ is lower semicontinuous at $\bar x$.
		\item[(b)] Assume that there exists $\bar y\in\Gamma(\bar x)$ such that
			$\Gamma$ is inner semicontinuous at $(\bar x,\bar y)$ w.r.t.\ $\dom\Gamma$. 
			Then $\varphi$ is continuous at $\bar x$ w.r.t.\ $\dom\Gamma$.
		\item[(c)] Assume that $\Gamma$ possesses the Aubin property at each
			point from $\{\bar x\}\times S(\bar x)$. Then $\varphi$ is
			locally Lipschitz continuous at $\bar x$.
		\item[(d)] Assume that there exists $\bar y\in S(\bar x)$ such that
			$\Gamma$ possesses the Aubin property at $(\bar x,\bar y)$ while
			$S$ is inner semicontinuous at this point. Then $\varphi$ is locally
			Lipschitz continuous at $\bar x$.			
	\end{enumerate}
\end{lemma}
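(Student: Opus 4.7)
The plan is to treat (a) as a standalone lower-semicontinuity argument and then to prove (b)--(d) as upper-semicontinuity refinements of increasing strength that dovetail with (a). The useful preparation is that, under (A2), continuity of $f$ and of the $h_i$'s ensures that $\Gamma(x)$ is nonempty, closed, and bounded for $x$ near $\bar x$, hence $S(x)$ is nonempty; moreover, $\gph S$ is closed and $S$ is locally bounded, so $S$ is upper semicontinuous at $\bar x$ and $S(\bar x)$ is compact. In particular, $\dist(y_x,S(\bar x))\to 0$ whenever $y_x\in S(x)$ and $x\to\bar x$.

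For (a), pick a sequence $x^k\to\bar x$ along which $\varphi(x^k)$ tends to $\liminf_{x\to\bar x}\varphi(x)$ (the case $\liminf=+\infty$ is trivial). By Weierstrass, each $\Gamma(x^k)$ admits a minimizer $y^k\in S(x^k)$, and (A2) together with closedness of $\gph\Gamma$ yields a subsequence with $y^{k_s}\to\bar y\in\Gamma(\bar x)$. Continuity of $f$ then gives $\liminf\varphi(x^k)=\lim f(x^{k_s},y^{k_s})=f(\bar x,\bar y)\geq\varphi(\bar x)$. For (b), reading the hypothesis with $\bar y$ taken to be a minimizer of $f(\bar x,\cdot)$ on $\Gamma(\bar x)$ (which is what the conclusion requires), inner semicontinuity produces, for each $x^k\to\bar x$ in $\dom\Gamma$, points $y^k\in\Gamma(x^k)$ with $y^k\to\bar y$. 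Hence $\limsup\varphi(x^k)\leq\lim f(x^k,y^k)=\varphi(\bar x)$, closing the gap with (a).

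Parts (c) and (d) share a single mechanism: a pair of Aubin inclusions turns the parametric variation of $\Gamma$ into a variation of $\varphi$ through local Lipschitzianity of $f$. Fix a point $\bar y\in S(\bar x)$ at which $\Gamma$ has the Aubin property, with modulus $\kappa$ and neighborhoods $U,V$. Since $\bar y\in\Gamma(\bar x)\cap V$, the inclusion $\Gamma(\bar x)\cap V\subset\Gamma(x)+\kappa\|x-\bar x\|\mathbb B$ delivers $y'\in\Gamma(x)$ with $\|y'-\bar y\|\leq\kappa\|x-\bar x\|$, and local Lipschitzianity of $f$ gives $\varphi(x)\leq f(x,y')\leq\varphi(\bar x)+L(1+\kappa)\|x-\bar x\|$. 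Conversely, for $y_x\in S(x)$ that lies in $V$, the dual inclusion $\Gamma(x)\cap V\subset\Gamma(\bar x)+\kappa\|x-\bar x\|\mathbb B$ delivers $y''\in\Gamma(\bar x)$ with $\|y''-y_x\|\leq\kappa\|x-\bar x\|$, and hence $\varphi(\bar x)\leq\varphi(x)+L(1+\kappa)\|x-\bar x\|$. In (d), the required $y_x$ is supplied by inner semicontinuity of $S$ at $(\bar x,\bar y)$, which forces $y_x\to\bar y$ and thus $y_x\in V$ eventually. In (c), no single minimizer is distinguished, so I would cover the compact set $S(\bar x)$ by finitely many Aubin neighborhoods $V_{y_1},\ldots,V_{y_N}$ whose associated constants $\kappa$ and $U$ can be chosen uniformly, and then use upper semicontinuity of $S$ to force $y_x\in S(x)$ into some $V_{y_i}$ for $x$ close to $\bar x$.

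The main obstacle is the uniformity step in (c): passing from pointwise Aubin constants at each $\bar y\in S(\bar x)$ to a common $\kappa$ and a common $x$-neighborhood, and arranging the finite cover so that the shadow $y_x\in S(x)$ is always captured by at least one patch once $x$ is close enough to $\bar x$. This is analogous in spirit to the uniform-modulus argument underlying \cref{lem:stability_of_R_regularity}. Once this bookkeeping is in hand, the two-sided Lipschitz estimate follows mechanically on each patch, and parts (a), (b), and (d) are essentially one-shot applications of Weierstrass, inner semicontinuity, and the pair of Aubin inclusions, respectively.
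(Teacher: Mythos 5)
Your route is genuinely different from the paper's: the paper disposes of (a) and (b) by citing Theorem~4.2.1 of Bank et al.\ and of (c) and (d) by verifying the hypotheses of Theorem~5.3 of Mordukhovich--Nam, whereas you give a self-contained argument (Weierstrass plus closedness of $\gph\Gamma$ for (a), inner semicontinuity for (b), the two Aubin inclusions for (c) and (d)). Part (a) is correct as written. In (b), however, you quietly replace the stated hypothesis ``$\bar y\in\Gamma(\bar x)$'' by ``$\bar y\in S(\bar x)$''; with the literal hypothesis your argument only yields $\limsup_k\varphi(x^k)\le f(\bar x,\bar y)$, which need not equal $\varphi(\bar x)$, so you are proving a variant of the statement rather than the statement itself, and this deserves an explicit remark rather than a silent reinterpretation. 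Also, your preparatory claim that \textbf{(A2)} alone makes $S$ upper semicontinuous at $\bar x$ (so that $\dist(y_x,S(\bar x))\to 0$) is false --- the paper's own \cref{ex:no_R_regularity_despite_RCPLD} is a counterexample; in (c) the capture of $y_x\in S(x)$ by the finitely many patches must be derived \emph{after} the one-sided estimate $\varphi(x)\le\varphi(\bar x)+L(1+\kappa)\Vert x-\bar x\Vert$ has forced, together with (a), continuity of $\varphi$ at $\bar x$ and hence closedness of $S$ at $\bar x$. Both points are repairable.

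The one substantive gap is in (c) and (d): both of your inclusions are anchored at $\bar x$, so what you actually obtain is $\vert\varphi(x)-\varphi(\bar x)\vert\le L(1+\kappa)\Vert x-\bar x\Vert$, i.e.\ calmness of $\varphi$ at $\bar x$, not local Lipschitz continuity in the sense the paper uses (a bound on $\vert\varphi(x^1)-\varphi(x^2)\vert$ for \emph{all} $x^1,x^2$ near $\bar x$). To close this you must run the same mechanism between two arbitrary nearby parameters: pick $y^1\in S(x^1)$, capture it in a patch $V_{y_i}$, apply $\Gamma(x^1)\cap V_{y_i}\subset\Gamma(x^2)+\kappa\Vert x^1-x^2\Vert\mathbb B$ to produce $y'\in\Gamma(x^2)$ with $\Vert y'-y^1\Vert\le\kappa\Vert x^1-x^2\Vert$, conclude $\varphi(x^2)\le\varphi(x^1)+L(1+\kappa)\Vert x^1-x^2\Vert$, and symmetrize. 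The Aubin property is stated for pairs $x,x'\in U$, so this is available, and it is exactly the two-point scheme the paper itself carries out (in the R-regular setting) in the proof of \cref{lem:local:Lipschitz_continuity_of_varphi_via_R_regularity}. With that correction and the finite-cover uniformization you sketch, the argument goes through.
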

\begin{proof}
	\begin{enumerate}
		\item[(a)] By continuity of the functions $h_1,\ldots,h_p$ and validity
			of \hyperref[ass:A2]{\textup{\textbf{(A2)}}}, we obtain upper semicontinuity
			of $\Gamma$ at $\bar x$. Thus, the desired assertion can be distilled from 		
			\cite[Theorem~4.2.1]{BankGuddatKlatteKummerTammer1983} since $f$ is
			continuous.
		\item[(b)] Consulting the proof of \cite[Theorem~4.2.1]{BankGuddatKlatteKummerTammer1983},
			inner semicontinuity of $\Gamma$ at $(\bar x,\bar y)$ is enough to guarantee
			that $\varphi$ is upper semicontinuous at $\bar x$ since $f$ is continuous.
			Combining this with (a), the desired result follows.
		\item[(c)] Due to validity of \hyperref[ass:A2]{\textup{\textbf{(A2)}}}, the solution
			mapping $S$ is locally bounded at $\bar x$ as well. Particularly, $S$ possesses
			bounded images in a neighborhood of $\bar x$. Due to $\bar x\in\dom\Gamma$, we have
			$\Gamma(\bar x)\neq \varnothing$ and, thus, $S(\bar x)\neq\varnothing$ 
			by Weierstrass' theorem. Since $\Gamma$ possesses the
			Aubin property at each point from $\{\bar x\}\times S(\bar x)$, $\Gamma$ is
			inner semicontinuous at each point $(\bar x,y)\in\gph S$ and, thus, 
			possesses nonempty image sets in a
			neighborhood of $\bar x$. Thus, we deduce that $S$ possesses bounded
			and nonempty image sets in a neighborhood of $\bar x$. 
			Furthermore, $\varphi$ is lower semicontinuous at $\bar x$ by (a).
			Thus, the statement follows from \cite[Theorem~5.3(ii)]{MordukhovichNam2005}.
		\item[(d)] This follows directly from \cite[Theorem~5.3(i)]{MordukhovichNam2005}
			while observing that $\varphi$ is continuous at $\bar x$ by inner semicontinuity
			of $S$ at $(\bar x,\bar y)$ and continuity of $f$.
	\end{enumerate}
\end{proof}
We would like to mention that statement (d) of \cref{lem:continuity_of_varphi} holds even true in the
absence of \hyperref[ass:A2]{\textup{\textbf{(A2)}}} since the latter has not been used in the proof.

As a corollary of \cref{thm:R_regularity_and_RCPLD,thm:R_regularity_and_RCPLD_with_inner_semicontinuity} as well as \cref{lem:continuity_of_varphi}, we obtain the following result as a consequence of the 
local Lipschitz continuity of the functions $h_1,\ldots,h_p$ 
since the latter implies that R-regularity of $\Gamma$ at some point of its graph already
guarantees validity of the Aubin property there.

\begin{corollary}\label{cor:local_Lipschitz_continuity_of_varphi_via_RCPLD}
	Fix some point $\bar x\in\dom\Gamma$. Let one of the following additional assumptions be valid.
	\begin{enumerate}
		\item[(a)] Let \hyperref[ass:A1]{\textup{\textbf{(A1)}}} and
		 	\hyperref[ass:A2]{\textup{\textbf{(A2)}}} hold.
			Furthermore, let RCPLD$_\Gamma$ hold at each point from $\{\bar x\}\times \Gamma(\bar x)$
			and assume that $\bar x$ is an interior point of $\dom\Gamma$.
		\item[(b)] Let $\bar y\in S(\bar x)$ be chosen such that  $S$ is inner
			semicontinuous at $(\bar x,\bar y)$ while RCPLD$_\Gamma$ holds at this point.
	\end{enumerate}
	Then $\varphi$ is locally Lipschitz continuous at $\bar x$.
\end{corollary}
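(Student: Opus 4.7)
The plan is to chain together the main R-regularity theorems of \cref{sec:sufficient_conditions_for_R_regularity} with the marginal-function continuity results of \cref{lem:continuity_of_varphi}. The bridge between them is the observation made in the paragraph preceding \cref{lem:inner_semicontinuity_via_R_regularity}: whenever $h_1,\ldots,h_p$ are locally Lipschitz continuous at the reference point, R-regularity of $\Gamma$ there automatically implies the Aubin property of $\Gamma$ there. Under \cref{ass:parametric_optimization}, this local Lipschitz continuity comes for free, so in each case I can upgrade R-regularity to the Aubin property and then feed the appropriate part of \cref{lem:continuity_of_varphi}.

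For case (a), I invoke \cref{thm:R_regularity_and_RCPLD} under \hyperref[ass:A1]{\textup{\textbf{(A1)}}}, \hyperref[ass:A2]{\textup{\textbf{(A2)}}}, and RCPLD$_\Gamma$ at every point of $\{\bar x\}\times\Gamma(\bar x)$ to obtain R-regularity of $\Gamma$ at each such point w.r.t.\ $\dom\Gamma$. The hypothesis $\bar x\in\intr\dom\Gamma$ allows me to shrink a neighborhood so that the qualifier ``w.r.t.\ $\dom\Gamma$'' becomes vacuous, and the bridge above promotes the conclusion to the unrestricted Aubin property of $\Gamma$ at each point of $\{\bar x\}\times\Gamma(\bar x)$, hence a fortiori at each point of $\{\bar x\}\times S(\bar x)$. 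Applying \cref{lem:continuity_of_varphi}(c), whose standing hypothesis \hyperref[ass:A2]{\textup{\textbf{(A2)}}} is part of the assumption set, closes the case.

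For case (b), I first observe that $S(x)\subset\Gamma(x)$ for all $x$, so unrestricted inner semicontinuity of $S$ at $(\bar x,\bar y)$ transfers to $\Gamma$ at that point; combined with RCPLD$_\Gamma$ there, \cref{thm:R_regularity_and_RCPLD_with_inner_semicontinuity} yields R-regularity of $\Gamma$ at $(\bar x,\bar y)$ w.r.t.\ $\dom\Gamma$. Unrestricted inner semicontinuity of $S$ at $(\bar x,\bar y)$ forces every sequence $x^k\to\bar x$ in $\R^n$ to eventually lie in $\dom\Gamma$, so $\bar x\in\intr\dom\Gamma$ and the qualifier again drops away. The bridge then upgrades R-regularity to the unrestricted Aubin property of $\Gamma$ at $(\bar x,\bar y)$, and \cref{lem:continuity_of_varphi}(d), which does not require \hyperref[ass:A2]{\textup{\textbf{(A2)}}}, delivers local Lipschitz continuity of $\varphi$ at $\bar x$.

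No step is genuinely difficult, since everything is pre-packaged in Sections~\ref{sec:notation_and_preliminaries}--\ref{sec:parametric_optimization}. The only point requiring care is the bookkeeping of the ``w.r.t.\ $\Omega$'' qualifiers: in both cases, R-regularity is initially produced only relative to $\dom\Gamma$, whereas the formulations of \cref{lem:continuity_of_varphi}(c) and (d) demand the unrestricted Aubin property. This gap is closed in case (a) directly by the interiority hypothesis on $\bar x$, and in case (b) by extracting the same interiority from the unrestricted inner semicontinuity of $S$.
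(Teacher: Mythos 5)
Your argument is correct and follows exactly the route the paper intends: the corollary is stated as an immediate consequence of \cref{thm:R_regularity_and_RCPLD,thm:R_regularity_and_RCPLD_with_inner_semicontinuity}, the observation that R-regularity plus local Lipschitz continuity of $h_1,\ldots,h_p$ (automatic under \cref{ass:parametric_optimization}) yields the Aubin property, and parts (c) and (d) of \cref{lem:continuity_of_varphi}. Your explicit bookkeeping of the ``w.r.t.\ $\dom\Gamma$'' qualifiers --- using the interiority hypothesis in (a) and extracting interiority of $\bar x$ in $\dom S$ from the unrestricted inner semicontinuity of $S$ in (b) --- correctly fills in the one detail the paper leaves implicit.
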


Let us mention that in the presence of \hyperref[ass:A1]{\textup{\textbf{(A1)}}}, 
the validity of MFCQ at
\emph{one} point $(\bar x,\bar y)\in\gph\Gamma$ implies that Slater's 
constraint qualification is
valid for the set $\Gamma(\bar x)$, 
i.e., there is some $\tilde y\in\R^m$ satisfying
$h_i(\bar x,\tilde y)<0$ for all $i\in I$ and the gradients
$(\nabla_yh_i(\bar x,\cdot))_{i\in J}$ 
(which, by validity of \hyperref[ass:A1]{\textup{\textbf{(A1)}}}, do not depend on $y$) 
are linearly independent.
The latter, however, guarantees that MFCQ and, thus,
RCPLD$_\Gamma$ hold at \emph{each} point from $\{\bar x\}\times\Gamma(\bar x)$.
As mentioned earlier, validity of MFCQ at $(\bar x,\bar y)$ also ensures that
$\bar x$ is an interior point of $\dom\Gamma$.
Thus, the regularity assumptions in the first statement of \cref{cor:local_Lipschitz_continuity_of_varphi_via_RCPLD}
are weaker than postulating validity of MFCQ at one point from $\{\bar x\}\times\Gamma(\bar x)$, and
the latter is a classical assumption in the literature to guarantee local Lipschitz
continuity of marginal functions, see e.g.\
\cite[Theorem~1]{KlatteKummer1985}.

We would like to point out that the assumption on $\bar x$ in the first statement of
\cref{cor:local_Lipschitz_continuity_of_varphi_via_RCPLD} to be an interior point
of $\dom\Gamma$ is, in general, indispensable in order to infer the local Lipschitz
continuity of $\varphi$ at this point since \cref{thm:R_regularity_and_RCPLD} only
provides R-regularity, and, thus, the Aubin property, of $\Gamma$ w.r.t.\ $\dom\Gamma$.
Observe that the assumptions of the second statement of 
\cref{cor:local_Lipschitz_continuity_of_varphi_via_RCPLD} already imply that $\bar x$
is an interior point of $\dom S$.
\begin{example}\label{ex:discontinuous_marginal_function_despite_RCPLD}
	Let us consider the simple parametric optimization problem
	\[
		\min\limits_y\{y\,|\,0\leq y\leq x\}.
	\]
	Observing that all involved functions are fully linear, RCPLD$_\Gamma$ holds at
	each point of $\gph\Gamma$ in this example. Nevertheless, the associated optimal
	value function $\varphi$ is discontinuous at $\bar x:=0$ which is a boundary
	point of $\dom\Gamma=[0,\infty)$.
	However, we note that $\varphi$ is Lipschitz continuous w.r.t.\ $\dom\Gamma$.
\end{example}

It is also possible to obtain Lipschitzian properties of the optimal value function $\varphi$
w.r.t.\ $\dom\Gamma$ without relying on the fundamentals of variational analysis,
which were used in \cite{MordukhovichNam2005}, but exploiting
the concept of R-regularity directly.
\begin{lemma}\label{lem:local:Lipschitz_continuity_of_varphi_via_R_regularity}
	Fix some point $\bar x\in\dom\Gamma$.
	Let one of the following additional assumptions be valid.
	\begin{enumerate}
		\item[(a)] Let \hyperref[ass:A2]{\textup{\textbf{(A2)}}} hold and assume that
			$\Gamma$ is R-regular at each point from $\{\bar x\}\times S(\bar x)$
			w.r.t.\ $\dom\Gamma$.
		\item[(b)] Assume that there exists $\bar y\in S(\bar x)$ such that $\Gamma$ is
			R-regular at $(\bar x,\bar y)$ w.r.t.\ $\dom\Gamma$ while $S$ is inner
			semicontinuous at this point w.r.t.\ $\dom\Gamma$. 
	\end{enumerate}
	Then $\varphi$ is locally Lipschitz continuous at $\bar x$ w.r.t.\ $\dom\Gamma$.
\end{lemma}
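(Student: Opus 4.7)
The plan is to convert R-regularity into a Lipschitz estimate by a transport argument: given $x,x'\in\dom\Gamma$ close to $\bar x$ and a minimizer $y\in S(x)$, use R-regularity to locate a feasible point $z\in\Gamma(x')$ close to $y$, then bound $\varphi(x')-\varphi(x)\leq f(x',z)-f(x,y)$ using the Lipschitz behavior of $f$ and of the constraint functions in the parameter $x$.

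For part (a), I would first observe that $S(\bar x)$ is nonempty and compact: by \hyperref[ass:A2]{\textup{\textbf{(A2)}}} and continuity of $h_1,\ldots,h_p$, the set $\Gamma(\bar x)$ is compact, and continuity of $f$ then guarantees a minimizer via Weierstrass' theorem. Consequently \cref{lem:stability_of_R_regularity}, applied to the compact set $C:=\{\bar x\}\times S(\bar x)$, yields a uniform modulus $\kappa>0$ of R-regularity on some open neighborhood of $C$, which by the tube lemma (using compactness of $S(\bar x)$) may be taken to be a product neighborhood $V\times W$. Next I would establish upper semicontinuity of $S$ at $\bar x$ from local boundedness of $\Gamma$ and lower semicontinuity of $\varphi$ (\cref{lem:continuity_of_varphi}(a)) via the standard subsequence argument, which gives $S(x)\subset W$ for all $x\in \dom\Gamma$ sufficiently close to $\bar x$. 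Then, for any such $x,x'$ and any $y\in S(x)$, the pair $(x',y)\in V\times W$; using $h_i(x,y)\leq 0$ for $i\in I$ and $h_i(x,y)=0$ for $i\in J$ together with Lipschitz continuity of the $h_i$ in $x$ on $V\times W$, uniform R-regularity delivers
\[
  \dist(y,\Gamma(x'))\leq \kappa L\norm{x'-x}
\]
for some constant $L>0$. Closedness of $\Gamma(x')$ in $\R^m$ makes it proximinal, so the infimum is attained at some $z\in\Gamma(x')$. Applying the Lipschitz constant of $f$ on $V\times W$ yields
\[
  \varphi(x')-\varphi(x)\leq f(x',z)-f(x,y)\leq L(1+\kappa L)\norm{x'-x},
\]
and a symmetric argument, obtained by swapping the roles of $x$ and $x'$, completes the Lipschitz estimate w.r.t.\ $\dom\Gamma$.

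For part (b) the same template applies, but I use the R-regularity neighborhood of the single point $(\bar x,\bar y)$ directly and invoke inner semicontinuity of $S$ at $(\bar x,\bar y)$ w.r.t.\ $\dom\Gamma$ to produce, along any sequence $x^k\to\bar x$ in $\dom\Gamma$, a companion sequence $y^k\in S(x^k)$ with $y^k\to\bar y$; then for large $k$ the pair $(x'^k,y^k)$ lies in the R-regularity neighborhood and the transport inequality goes through as before. A proof by contradiction, assuming that the Lipschitz bound fails along sequences $x^k,x'^k\to\bar x$ in $\dom\Gamma$, thereby reduces to the same estimate. The main obstacle is the bookkeeping in part (a) required to ensure that the pair $(x',y)$ always lies inside a region where R-regularity holds with a \emph{uniform} modulus; this is where \cref{lem:stability_of_R_regularity}, the tube lemma, and upper semicontinuity of $S$ must be combined. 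In part (b) this obstacle dissolves, since inner semicontinuity supplies the required nearby selection without any uniformization step, and part (b) is correspondingly the easier half of the lemma.
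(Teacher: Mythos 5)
Your overall architecture matches the paper's: compactness of $\{\bar x\}\times S(\bar x)$ plus \cref{lem:stability_of_R_regularity} to obtain a uniform modulus of R-regularity on a product neighborhood, localization of the minimizers of the nearby problems into that neighborhood, and then a transport estimate; in part (b) the uniformization step is replaced by inner semicontinuity of $S$. One stylistic difference: your transport step is more direct than the paper's. You pass to a projection $z\in\Pi(y,\Gamma(x'))$ and use $\varphi(x')\leq f(x',z)$, whereas the paper invokes Clarke's exact penalization to conclude that $y^j$ globally minimizes $y\mapsto f(x^j,y)+2L_f\dist(y,\Gamma(x^j))$ and only then estimates the distance term via R-regularity. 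Both routes produce the same inequality (yours with a marginally better constant); the only bookkeeping you still owe is that $z$ may fall slightly outside $W$, so the Lipschitz modulus of $f$ must be taken on a slightly enlarged set --- the paper handles this by introducing the larger open set $O'\supset O$.

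There is, however, one genuine misstep in part (a): upper semicontinuity of $S$ at $\bar x$ does \emph{not} follow from local boundedness of $\Gamma$ together with \emph{lower} semicontinuity of $\varphi$. The standard subsequence argument takes $x^k\to\bar x$ in $\dom\Gamma$ and $y^k\in S(x^k)$ with $y^k\to\tilde y$, notes $\tilde y\in\Gamma(\bar x)$ and $f(\bar x,\tilde y)=\lim_k\varphi(x^k)$, and then needs $\limsup_k\varphi(x^k)\leq\varphi(\bar x)$, i.e.\ \emph{upper} semicontinuity of $\varphi$ at $\bar x$ w.r.t.\ $\dom\Gamma$, in order to conclude $\tilde y\in S(\bar x)\subset W$; lower semicontinuity gives the inequality in the useless direction. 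The repair is available from your own hypotheses: R-regularity of $\Gamma$ at the points of $\{\bar x\}\times S(\bar x)$ w.r.t.\ $\dom\Gamma$ yields inner semicontinuity of $\Gamma$ there by \cref{lem:inner_semicontinuity_via_R_regularity}, whence $\varphi$ is continuous at $\bar x$ w.r.t.\ $\dom\Gamma$ by statement (b) of \cref{lem:continuity_of_varphi}; this is precisely the detour the paper takes before concluding $S(x)\subset O$ for all $x\in\dom\Gamma$ near $\bar x$. With that correction (and noting that nonemptiness of $S(x)$ for such $x$ already follows from \hyperref[ass:A2]{\textup{\textbf{(A2)}}} and Weierstrass), part (a) goes through, and your treatment of part (b) is sound.
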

\begin{proof}
	\begin{enumerate}
		\item[(a)] Due to $\bar x\in\dom\Gamma$ and validity of 
			\hyperref[ass:A2]{\textup{\textbf{(A2)}}}, we indeed know $S(\bar x)\neq\varnothing$.
			Additionally, the set $S(\bar x)$ is closed, i.e., $\{\bar x\}\times S(\bar x)$ is
			compact. Thus, we can apply \cref{lem:stability_of_R_regularity} in order to
			find constants $\kappa>0$ and $\gamma>0$ as well as an open set $O\supset S(\bar x)$
			such that \eqref{eq:R_regularity} holds with $U:=\mathbb U_\gamma(\bar x)\times O$.
			 Similar as in the proof of statement (c) of \cref{lem:continuity_of_varphi}, we
			 can ensure $S(x)\neq\varnothing$ for all $x\in\mathbb U_\gamma(\bar x)\cap\dom\Gamma$
			 if only $\gamma$ is small enough. 
			 Moreover, due to \cref{lem:inner_semicontinuity_via_R_regularity}, we know that
			 $\Gamma$ is inner semicontinuous at each point from $\{\bar x\}\times S(\bar x)$ 
			 w.r.t.\ $\dom\Gamma$. Thus, we can
			 apply statement (b) of \cref{lem:continuity_of_varphi}	in order to see that
			 $\varphi$ is continuous at $\bar x$ w.r.t.\ $\dom\Gamma$. Combining this with the
			 local boundedness of $S$ and the continuity of $h_1,\ldots,h_p$, we obtain that
			 $S$ is upper semicontinuous at $\bar x$. Thus, we can even
			 choose $\gamma$ so small that $S(x)\subset O$ holds for all 
			 $x\in\mathbb U_\gamma(\bar x)\cap \dom\Gamma$. 
			 Clearly, $\Gamma$ is upper semicontinuous
			 at $\bar x$ as well which is why we find an open set $O'\supset \Gamma(\bar x)$
			 which satisfies $O'\supset O$ and $\Gamma(x)\subset O'$ for all 
			 $x\in\mathbb U_\gamma(\bar x)\cap \dom\Gamma$ if only $\gamma$ is sufficiently small.
			 By continuous differentiability of the functions $f$ and $h_1,\ldots,h_p$, these
			 functions are Lipschitz continuous on $\mathbb B_\gamma(\bar x)\times\cl O'$. 
			 Let $L_f>0$ and $L_1,\ldots,L_p>0$ be the associated Lipschitz moduli.
			 		 
			 Now, fix $x^1,x^2\in\mathbb U_\gamma(\bar x)\cap \dom\Gamma$. Then we find
			 $y^1,y^2\in O$ such that $y^1\in S(x^1)$ and $y^2\in S(x^2)$. 
			 We exploit \cite[Proposition~2.4.3]{Clarke1983} in order to see that $y^j$ is a
			 global minimizer of that map $O'\ni y\mapsto f(x^j,y)+2L_f\dist(y,\Gamma(x^j))\in\R$
			 for $j=1,2$ as well. Particularly, we obtain
			 \[
			 	\varphi(x^j)=f(x^j,y^j)\leq f(x^j,y^{3-j})+2L_f\dist(y^{3-j},\Gamma(x^j)),
			 	\qquad j=1,2.
			 \]
			 Now, we exploit \eqref{eq:R_regularity} in order to obtain
			 \begin{align*}
			 	\varphi(x^1)
			 	&\leq
			 	f(x^1,y^2)+2L_f\,\dist(y^2,\Gamma(x^1))\\
			 	&\leq
			 	f(x^2,y^2)+f(x^1,y^2)-f(x^2,y^2)\\
			 	&\qquad
			 		+2L_f\kappa\,\max	\bigl\{	0,
			 									\max\{h_i(x^1,y^2)\,|\,i\in I\},
			 									\max\{|h_i(x^1,y^2)|\,|\,i\in J\}
			 							\bigr\}\\
			 	&\leq
			 	\varphi(x^2)+f(x^1,y^2)-f(x^2,y^2)\\
			 	&\qquad	+2L_f\kappa\,\max	\bigl\{	0,
			 									\max\{h_i(x^1,y^2)-h_i(x^2,y^2)\,|\,i\in I\},\\
			 	&\qquad\qquad\qquad\qquad\qquad
			 									\max\{|h_i(x^1,y^2)-h_i(x^2,y^2)|\,|\,i\in J\}
			 							\bigr\}\\
			 	&\leq 
			 	\varphi(x^2)+L_f\norm{x^1-x^2}
			 		+2L_f\kappa\max\{L_i\,|\,i\in I\cup J\}\norm{x^1-x^2}\\
			 	&\leq
			 	\varphi(x^2)+L_f\bigl(1+2\kappa\max\{L_i\,|\,i\in I\cup J\}\bigr)\norm{x^1-x^2}.
			 \end{align*}
			 Changing the roles of the pairs $(x^1,y^1)$ and $(x^2,y^2)$ yields the local
			 Lipschitz continuity of $\varphi$ w.r.t.\ $\dom\Gamma$.
		\item[(b)] The proof can be carried out in a similar way as in (a).
			The postulated R-regularity of $\Gamma$ at $(\bar x,\bar y)$ yields
			the existence of constants $\kappa>0$ as well as $\gamma>0$ and $\delta>0$ such
			that \eqref{eq:R_regularity_with_explicit_neighborhoods} holds. By inner
			semicontinuity of $S$ at $(\bar x,\bar y)$ w.r.t.\ $\dom\Gamma$, we
			can choose $\gamma$ and $\delta$ so small such that we have
			\[
				\forall x\in\mathbb U_\gamma(\bar x)\cap\dom\Gamma\colon\quad
				\mathbb U_{\delta/2}(\bar y)\cap S(x)\neq\varnothing.
			\]
			Furthermore, we note that by continuous differentiability of 
			$f$ and $h_1,\ldots,h_p$, these functions are Lipschitz continuous on
			$\mathbb B_\gamma(\bar x)\times\mathbb B_{2\delta}(\bar y)$ with some Lipschitz
			moduli $L_f>0$ and $L_1,\ldots,L_p>0$.
			
			Now, fix $x^1,x^2\in\mathbb U_\gamma(\bar x)\cap\dom\Gamma$.
			The above arguments yield the existence of $y^1,y^2\in\mathbb U_{\delta/2}(\bar y)$
			such that $y^1\in S(x^1)$ and $y^2\in S(x^2)$ hold.
			Exploiting \cite[Proposition~2.4.3]{Clarke1983}, we find 
			\[
				\varphi(x^j)
				=
				f(x^j,y^j)
				\leq
				f(x^j,y^{3-j})+2L_f\dist(y^{3-j},\Gamma(x^j)\cap\mathbb B_{2\delta}(\bar y)),
			 	\qquad j=1,2.
			 \]
			 Due to $y^j\in\Gamma(x^j)\cap\mathbb U_{\delta/2}(\bar y)$, we even have
			 \[
			 	\dist(y^{3-j},\Gamma(x^j)\cap\mathbb B_{2\delta}(\bar y))
			 	=
			 	\dist(y^{3-j},\Gamma(x^j)),\qquad j=1,2,
			 \]
			 and, thus, the rest of the proof can be carried out as in statement (a).
	\end{enumerate}
\end{proof}

Let us briefly mention that the first statement of the above lemma
may be interpreted as an adjustment of
\cite[Theorem~5.4]{BednarczukMinchenkoRutkowski2019} whose set of assumptions is not
complete. Indeed, in the proof of this theorem, the authors exploit the presence
of R-regularity at each point from $\{\bar x\}\times S(\bar x)$ which is not covered
by the assumptions stated there.
In \cite[Theorem~4.1]{BaiYe2020}, the authors present criteria ensuring \emph{directional} Lipschitz
continuity of $\varphi$. Therefore, they impose \emph{directional} R-regularity of the mapping $\Gamma$.
In the non-directional case, their result essentially recovers 
\cref{lem:local:Lipschitz_continuity_of_varphi_via_R_regularity}
while exploiting a different boundedness assumption.

We obtain the following corollary from 
\cref{thm:R_regularity_and_RCPLD,thm:R_regularity_and_RCPLD_with_inner_semicontinuity}
as well as \cref{lem:local:Lipschitz_continuity_of_varphi_via_R_regularity}.
\begin{corollary}\label{cor:local_Lipschitz_continuity_of_varphi_via_RCPLD_restricted}
	Fix some point $\bar x\in\dom\Gamma$. Let one of the following additional assumptions be valid.
	\begin{enumerate}
		\item[(a)] Let \hyperref[ass:A1]{\textup{\textbf{(A1)}}} and
		 	\hyperref[ass:A2]{\textup{\textbf{(A2)}}} hold.
			Furthermore, let RCPLD$_\Gamma$ w.r.t.\ $\dom\Gamma$
			hold at each point from $\{\bar x\}\times \Gamma(\bar x)$.
		\item[(b)] Let $\bar y\in S(\bar x)$ be chosen such that  $S$ is inner
			semicontinuous at $(\bar x,\bar y)$ w.r.t.\ $\dom\Gamma$
			while RCPLD$_\Gamma$ w.r.t.\ $\dom\Gamma$ holds at this point.
	\end{enumerate}
	Then $\varphi$ is locally Lipschitz continuous at $\bar x$ w.r.t.\ $\dom\Gamma$.
\end{corollary}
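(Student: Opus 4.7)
The plan is to show that this corollary follows directly by chaining two of the earlier results: the RCPLD-based criteria for R-regularity (\cref{thm:R_regularity_and_RCPLD,thm:R_regularity_and_RCPLD_with_inner_semicontinuity}) furnish R-regularity of $\Gamma$ at the appropriate points, and \cref{lem:local:Lipschitz_continuity_of_varphi_via_R_regularity} then converts R-regularity of $\Gamma$ into the desired local Lipschitz continuity of $\varphi$ w.r.t.\ $\dom\Gamma$. The two cases line up one-to-one with the two statements of that lemma, so the argument splits naturally.

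For case (a), I would first apply \cref{thm:R_regularity_and_RCPLD}: under \hyperref[ass:A1]{\textup{\textbf{(A1)}}}, \hyperref[ass:A2]{\textup{\textbf{(A2)}}}, and RCPLD$_\Gamma$ at every point from $\{\bar x\}\times\Gamma(\bar x)$ w.r.t.\ $\dom\Gamma$, $\Gamma$ is R-regular w.r.t.\ $\dom\Gamma$ at every point from $\{\bar x\}\times\Gamma(\bar x)$. Since $S(\bar x)\subset\Gamma(\bar x)$, this a fortiori yields R-regularity of $\Gamma$ at each point of $\{\bar x\}\times S(\bar x)$ w.r.t.\ $\dom\Gamma$. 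Together with \hyperref[ass:A2]{\textup{\textbf{(A2)}}}, this matches the hypotheses of \cref{lem:local:Lipschitz_continuity_of_varphi_via_R_regularity}(a), so $\varphi$ is locally Lipschitz continuous at $\bar x$ w.r.t.\ $\dom\Gamma$.

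For case (b), the key observation is that inner semicontinuity of $S$ at $(\bar x,\bar y)$ w.r.t.\ $\dom\Gamma$ transfers to inner semicontinuity of $\Gamma$ at the same point w.r.t.\ $\dom\Gamma$: for any sequence $\{x^k\}_{k\in\N}\subset\dom\Gamma$ with $x^k\to\bar x$, the sequence $\{y^k\}_{k\in\N}$ produced by inner semicontinuity of $S$ satisfies $y^k\in S(x^k)\subset\Gamma(x^k)$ for large $k$ and $y^k\to\bar y$. Consequently, the hypotheses of \cref{thm:R_regularity_and_RCPLD_with_inner_semicontinuity} are met at $(\bar x,\bar y)$, and $\Gamma$ is R-regular at $(\bar x,\bar y)$ w.r.t.\ $\dom\Gamma$. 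Together with the assumed inner semicontinuity of $S$ at $(\bar x,\bar y)$ w.r.t.\ $\dom\Gamma$, this gives exactly the hypotheses of \cref{lem:local:Lipschitz_continuity_of_varphi_via_R_regularity}(b), and the conclusion follows.

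The argument is essentially bookkeeping rather than calculation; no step requires a genuinely new idea. The only points that need a brief verbal remark are the inclusion $S(\bar x)\subset\Gamma(\bar x)$ in case (a) and the transfer of inner semicontinuity from $S$ to $\Gamma$ in case (b), both of which are immediate from the definitions.
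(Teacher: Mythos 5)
Your proposal is correct and follows exactly the route the paper intends: the corollary is stated there as an immediate consequence of \cref{thm:R_regularity_and_RCPLD,thm:R_regularity_and_RCPLD_with_inner_semicontinuity} combined with \cref{lem:local:Lipschitz_continuity_of_varphi_via_R_regularity}, and your two bookkeeping observations (the inclusion $S(\bar x)\subset\Gamma(\bar x)$ and the transfer of inner semicontinuity from $S$ to $\Gamma$) are precisely the details the paper leaves implicit.
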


\subsubsection{R-regularity of solution mappings}

The following theorem provides a sufficient criterion for R-regularity of the solution
mapping $S$.
\begin{theorem}\label{thm:R_regularity_of_solution_mapping}
	Fix a point $\bar x\in\dom\Gamma$. 
	Then the following assertions hold.
	\begin{enumerate}
		\item[(a)] Let \hyperref[ass:A1mod]{\textup{\textbf{(A1')}}} and
		 	\hyperref[ass:A2]{\textup{\textbf{(A2)}}} hold.
			Furthermore, let RCPLD$_{S}$ 
			hold at each point from $\{\bar x\}\times S(\bar x)$.
			Finally, let $\varphi$ be continuous at $\bar x$.
			Then $S$ is R-regular at each
			point from $\{\bar x\}\times S(\bar x)$.
				Moreover, $S$ possesses the Aubin property at all these points.
		\item[(b)] Let $\bar y\in S(\bar x)$ be chosen such that $S$ is inner
			semicontinuous at $(\bar x,\bar y)$ 
			while RCPLD$_{S}$ holds at this point.
			Then $S$ is R-regular at $(\bar x,\bar y)$.
			Moreover, $S$ possesses the Aubin property at this point.
	\end{enumerate}
\end{theorem}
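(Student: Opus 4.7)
My plan is to apply \cref{thm:R_regularity_and_RCPLD,thm:R_regularity_and_RCPLD_with_inner_semicontinuity} directly to the representation \eqref{eq:representation_of_S} of $S$, viewed as a parametric constraint system with data $h_0,h_1,\ldots,h_p$, where $h_0(x,y):=f(x,y)-\varphi(x)$. Because $\varphi$ need not be continuous in the ambient sense, I would invoke \cref{rem:some_more_discontinuities} via the decomposition $h_0=g_0+t_0$ with $g_0:=f$ (continuously differentiable by \cref{ass:parametric_optimization}) and $t_0(x):=-\varphi(x)$. Finiteness of $\varphi$ on $\dom S$ near $\bar x$ is automatic, since the infimum in \eqref{eq:parametric_optimization_problem} is attained whenever $S(x)\neq\varnothing$; thus, the whole burden of the continuity hypothesis of \cref{rem:some_more_discontinuities} reduces to continuity of $\varphi$ at $\bar x$.

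For \textbf{(a)}, I would first verify that \hyperref[ass:A1]{\textup{\textbf{(A1)}}} for the $S$-system is a direct consequence of \hyperref[ass:A1mod]{\textup{\textbf{(A1')}}}: the map $h_0(x,\cdot)=f(x,\cdot)-\varphi(x)$ is convex in $y$, while the remaining constraints inherit convexity/affinity from \hyperref[ass:A1mod]{\textup{\textbf{(A1')}}}. Condition \hyperref[ass:A2]{\textup{\textbf{(A2)}}} for $S$ follows from the inclusion $S(x)\subset\Gamma(x)$; continuity of $\varphi$ at $\bar x$ is hypothesized; and RCPLD$_S$ at each point of $\{\bar x\}\times S(\bar x)$ is hypothesized. \Cref{thm:R_regularity_and_RCPLD}, applied to $S$ via \cref{rem:some_more_discontinuities}, then delivers R-regularity of $S$ at every point of $\{\bar x\}\times S(\bar x)$.

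For \textbf{(b)}, the inner semicontinuity hypothesis yields continuity of $\varphi$ at $\bar x$: for any $\{x^k\}\subset\dom S$ with $x^k\to\bar x$, one finds $y^k\in S(x^k)$ with $y^k\to\bar y$, whence $\varphi(x^k)=f(x^k,y^k)\to f(\bar x,\bar y)=\varphi(\bar x)$ by continuity of $f$. With RCPLD$_S$ at $(\bar x,\bar y)$ also assumed, \cref{thm:R_regularity_and_RCPLD_with_inner_semicontinuity}, applied to $S$ via \cref{rem:some_more_discontinuities}, delivers R-regularity of $S$ at $(\bar x,\bar y)$.

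The Aubin-property claim would then follow from the general principle preceding \cref{lem:inner_semicontinuity_via_R_regularity}: R-regularity of $S$ together with local Lipschitzianity of $h_0,h_1,\ldots,h_p$ implies the Aubin property. Since $h_1,\ldots,h_p$ and $f$ are $C^1$, the only outstanding item is local Lipschitz continuity of $\varphi$ at $\bar x$, and this is the main obstacle. To close it, I would note that the just-established R-regularity of $S$ provides, via \cref{lem:inner_semicontinuity_via_R_regularity} applied to $S$, inner semicontinuity of $S$ at the point(s) in question, and verify that RCPLD$_S$ at $(\bar x,\bar y)$ restricts to RCPLD$_\Gamma$ at the same point---every instance of positive-linear dependence with $K\subset I(\bar x,\bar y)$ is a special case of the RCPLD$_S$ clause that allows $K\subset I(\bar x,\bar y)\cup\{0\}$, while conditions (i) and (ii) pertain only to the index set $J$ and transfer verbatim. \Cref{cor:local_Lipschitz_continuity_of_varphi_via_RCPLD}(b) then supplies the required local Lipschitzianity of $\varphi$, and the Aubin-property claim follows.
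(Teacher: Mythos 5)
Your proposal is correct and follows essentially the same route as the paper's proof: apply \cref{thm:R_regularity_and_RCPLD} (resp.\ \cref{thm:R_regularity_and_RCPLD_with_inner_semicontinuity}) together with \cref{rem:some_more_discontinuities} to the representation \eqref{eq:representation_of_S} of $S$ to get R-regularity, then obtain local Lipschitz continuity of $\varphi$ from \cref{cor:local_Lipschitz_continuity_of_varphi_via_RCPLD}(b) using the implication RCPLD$_S\Rightarrow$RCPLD$_\Gamma$ and inner semicontinuity of $S$, which yields the Aubin property. The one detail worth making explicit is that continuity of $\varphi$ at $\bar x$ combined with \hyperref[ass:A2]{\textup{\textbf{(A2)}}} forces $\bar x\in\intr\dom S$, which is what upgrades the inner semicontinuity of $S$ supplied by \cref{lem:inner_semicontinuity_via_R_regularity} (a priori only w.r.t.\ $\dom S$) to the unrestricted form required in \cref{cor:local_Lipschitz_continuity_of_varphi_via_RCPLD}(b).
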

\begin{proof}
	We show both statements separately.
	\begin{enumerate}
		\item[(a)] 
				Due to continuity of $\varphi$ at $\bar x$, 
				we can apply \cref{thm:R_regularity_and_RCPLD} and  \cref{rem:some_more_discontinuities}
				in order to obtain R-regularity of $S$ at all points from $\{\bar x\}\times S(\bar x)$.
				Noting that $S(\bar x)$ is nonempty by validity of 
				\hyperref[ass:A2]{\textup{\textbf{(A2)}}}, we can fix some point
				$y\in S(\bar x)$.
				From \cref{lem:inner_semicontinuity_via_R_regularity}, we infer that $S$ is
				inner semicontinuous at $(\bar x,y)$ since $\bar x$ is an interior point of
				$\dom S$ by continuity of $\varphi$ at $\bar x$. 
				Observe that validity of RCPLD$_S$ at $(\bar x,y)$ guarantees validity
				of RCPLD$_\Gamma$ at this point. 
				Now, the second statement of \cref{cor:local_Lipschitz_continuity_of_varphi_via_RCPLD}
				ensures local Lipschitz continuity of $\varphi$ at $\bar x$.
				Consequently, locally around all points from $\{\bar x\}\times S(\bar x)$,
				the variational description 
				\eqref{eq:representation_of_S} of $S$ is given by locally Lipschitz continuous
				functions. Particularly, $S$ already possesses the Aubin property at all points
				from $\{\bar x\}\times S(\bar x)$.
		\item[(b)] 
			The proof is similar to the one of the first statement.
			However, we exploit
			\cref{thm:R_regularity_and_RCPLD_with_inner_semicontinuity} to infer R-regularity
			of $S$ at $(\bar x,\bar y)$. 
	\end{enumerate}
\end{proof}

The subsequently stated examples indicate that the continuity assumption in the first statement of
the above theorem is, unluckily, indispensable in general since it may not
follow from the postulated assumptions.
\begin{example}\label{ex_no_R-regularity_despite_RCPLD_jump}
	Once more, let us investigate the parametric optimization problem from
	\cref{ex:discontinuous_marginal_function_despite_RCPLD} 
	which satisfies \hyperref[ass:A1mod]{\textup{\textbf{(A1')}}} and
	\hyperref[ass:A2]{\textup{\textbf{(A2)}}}.
	There, we have
	\[
		\forall x\in\R\colon\quad
		S(x)=
		\begin{cases}
			\varnothing	& x\in(-\infty,0),\\
			\{0\}		& x\in[0,\infty),
		\end{cases}
		\qquad
		\varphi(x)=
		\begin{cases}
			+\infty		& x\in(-\infty,0),\\
			0			& x\in[0,\infty).
		\end{cases}
	\]
	Observing that all data functions used for the modeling of the
	given parametric optimization problem are fully linear, RCPLD$_S$
	holds at each point from $\gph S$, particularly at $(\bar x,\bar y):=(0,0)$.
	However, $\varphi$ is discontinuous at $\bar x$,
	and for $x^k:=-1/k$, $k\in\N$, we obtain
	\[
		\dist(\bar y,S(x^k))
		=
		+\infty
		>
		\kappa/k
		=
		\kappa\,\max\{0,\bar y-\varphi(x^k),-\bar y,\bar y-x^k\}
	\]
	for each $\kappa>0$ and each $k\in\N$, i.e., $S$ cannot be R-regular at $(\bar x,\bar y)$.
\end{example}

\begin{example}\label{ex:no_R_regularity_despite_RCPLD}
	We consider the parametric optimization problem
	\[
		\min\limits_y\{y_1\,|\,-1\leq y_1\leq 1,\,0\leq y_2\leq 1,\,xy_1-y_2=0\}.
	\]
	We see that this problem inherently satisfies 
	\hyperref[ass:A1mod]{\textup{\textbf{(A1')}}} and
	\hyperref[ass:A2]{\textup{\textbf{(A2)}}}.
	The associated solution mapping $S$ and the associated marginal function $\varphi$
	take the following form:
	\[
		\forall x\in\R\colon\quad
		S(x)
		=
		\begin{cases}
			\{(1/x,1)\}	&x\in(-\infty,-1),\\
			\{(-1,-x)\}	&x\in[-1,0],\\
			\{(0,0)\}	&x\in(0,\infty),
		\end{cases}
		\qquad
		\varphi(x)
		=
		\begin{cases}
			1/x	&x\in(-\infty,-1),\\
			-1	&x\in[-1,0],\\
			0	&x\in(0,\infty).
		\end{cases}
	\] 
	We fix the reference points $\bar x:=0$ and $\bar y:=(-1,0)$.
	Clearly, $\varphi$ is not continuous at $\bar x$.
	
	One can check that RCPLD$_S$ is violated at $(\bar x,\bar y)$
	when using the representation of $S$ from \eqref{eq:representation_of_S}.
	However, keeping \cref{rem:other representation_of_S} in mind, we may
	also consider the representation
	\[
		\forall x\in\R\colon\quad
		S(x)=\{(y_1,y_2)\,|\,-1\leq y_1\leq 1,\,0\leq y_2\leq 1,\,xy_1-y_2=0,\,y_1-\varphi(x)=0\}
	\]
	of $S$ in order to address the proof of \cref{thm:R_regularity_of_solution_mapping}
	since this representation still possesses the necessary convex structure w.r.t.\ $y$.
	One can easily check that RCPLD holds for this mapping at $(\bar x,\bar y)$
	since the family 
	\[
		\left(\begin{pmatrix}x\\-1\end{pmatrix},\begin{pmatrix}1\\0\end{pmatrix}\right)
	\]
	associated with the equality constraints has already constant rank $2$
	in a neighborhood of $(\bar x,\bar y)$.
	However, as observed above, $\varphi$ is not continuous at $\bar x$, i.e.,
	one cannot use \cref{thm:R_regularity_and_RCPLD} and \cref{rem:some_more_discontinuities}
	in order to infer R-regularity of the solution mapping at the reference point.
\end{example}

Fix some point $\bar x\in\dom S$.
The crucial requirement in 
\cref{thm:R_regularity_of_solution_mapping}
clearly is the validity of RCPLD$_S$ at each or only some point from $\{\bar x\}\times S(\bar x)$.
As mentioned earlier, validity of MFCQ at one point from $\{\bar x\}\times\Gamma(\bar x)$
is already enough to make sure that RCPLD$_\Gamma$ holds there as well.
Let us mention that, by definition of $\varphi$, there is no $y\in S(\bar x)$
such that $h_0(\bar x,y)<0$ holds. This indicates that MFCQ generally fails to hold when applied
to the variational description \eqref{eq:representation_of_S} of $S$ which is discussed here.
Particularly, it cannot be used as a sufficient condition for RCPLD$_S$.
More details on this issue can be found in the subsequent remark.

\begin{remark}\label{rem:failure_of_MFCQ}
	Fix some point $(\bar x,\bar y)\in\gph S$.
	It is well known that this guarantees validity of the so-called \emph{Fritz--John
	conditions}, i.e., we find $\lambda_0,\lambda_1,\ldots,\lambda_p\in\R$ 
	which do not all vanish at the same time such that
	\[
		\begin{aligned}
			&\lambda_0\nabla_yh_0(\bar x,\bar y)
			+\mathsmaller\sum\nolimits_{i=1}^p\lambda_i\nabla _yh_i(\bar x,\bar y)=0,\\
			&\forall i\in I\cup\{0\}\colon\; \lambda_i\geq 0,\\
			&\forall i\in I\colon\; \lambda_i\,h_i(\bar x,\bar y)=0
		\end{aligned}
	\]
	holds, see \cite[Proposition~3.3.5]{Bertsekas1999}. 
	This, however, shows that the constraint qualification MFCQ w.r.t.\ the
	representation \eqref{eq:representation_of_S} of the mapping $S$ 
	cannot hold at $(\bar x,\bar y)$ since the pair of families
	\[		
				\left(
					\bigl(\nabla_yh_i(\bar x,\bar y)\bigr)_{i\in \{0\}\cup I(\bar x,\bar y)},
				 	\bigl(\nabla_yh_i(\bar x,\bar y)\bigr)_{i\in J}
				 \right)
	\]
	is positive-linearly dependent.
	Thus, versions of \cref{thm:R_regularity_of_solution_mapping} which exploit
	MFCQ w.r.t.\ $S$ instead of RCPLD$_{S}$ would not be reasonable at all.
	On the other hand, simple examples reveal that RCPLD$_{S}$ can hold at $(\bar x,\bar y)$,
	see \cref{ex:nonlinear_bilevel_programming_and_partial_calmness} below as well.
\end{remark}

The following lemma provides a characterization of RCPLD$_S$ via RCPLD$_\Gamma$.
\begin{lemma}\label{lem:char_RCPLD_S}
	Fix $(\bar x,\bar y)\in\gph S$.
	Then the subsequently stated conditions are equivalent.
	\begin{enumerate}
		\item[(a)] RCPLD$_S$ is valid at $(\bar x,\bar y)$.
		\item[(b)] RCPLD$_\Gamma$ is valid at $(\bar x,\bar y)$ with some 
			neighborhood $U$ of $(\bar x,\bar y)$
			and an index set $S\subset J$ according to \cref{def:constraint_qualifications}.
			Furthermore, for each $\lambda\in\Lambda(\bar x,\bar y)$
			such that the pair of families
			\[
				\left(
					\left(\nabla _yh_i(\bar x,\bar y)\right)_{i\in\{0\}\cup I_+(\bar x,\bar y,\lambda)},
					\left(\nabla _yh_i(\bar x,\bar y)\right)_{i\in S}
				\right)
			\]
			is positive-linearly dependent, 
			the family $(\nabla_yh_i(x,y))_{i\in\{0\}\cup I_+(\bar x,\bar y,\lambda)\cup S}$
			is linearly dependent for each $(x,y)\in U$.
			Above, we used
			\[
				\Lambda(\bar x,\bar y):=
				\left\{\lambda\in\R^p\,\middle|\,
				\begin{aligned}
					&\nabla_yh_0(\bar x,\bar y)
						+\mathsmaller\sum\nolimits_{i=1}^p\lambda_i\nabla_yh_i(\bar x,\bar y)=0,\\
					&\forall i\in I\colon\;\lambda_i\geq 0,\,\lambda_ih_i(\bar x,\bar y)=0
				\end{aligned}
				\right\}
			\]
			as well as
			\[
				\forall\lambda\in\Lambda(\bar x,\bar y)\colon\quad
				I_+(\bar x,\bar y,\lambda):=
				\left\{i\in I(\bar x,\bar y)\,\middle|\,\lambda_i>0\right\}.
			\]
	\end{enumerate}
\end{lemma}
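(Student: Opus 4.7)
The plan is to unfold both conditions against \cref{def:constraint_qualifications}(d) and observe that they differ only in condition (iii). The representation \eqref{eq:representation_of_S} of $S$ augments the system defining $\Gamma$ by the single inequality $h_0\leq 0$ and leaves the equality index set $J$ unchanged; moreover, $\bar y\in S(\bar x)$ forces $h_0(\bar x,\bar y)=0$, so the active inequality index set for the $S$-system equals $\{0\}\cup I(\bar x,\bar y)$. Consequently the basis condition (i) and the constant-rank condition (ii) for RCPLD$_S$ and RCPLD$_\Gamma$ can be met simultaneously with the same neighborhood $U$ and the same index set $S\subset J$. The equivalence therefore reduces to showing that condition (iii) of RCPLD$_S$, which ranges over $K\subset\{0\}\cup I(\bar x,\bar y)$, is equivalent to condition (iii) of RCPLD$_\Gamma$ (which covers the case $0\notin K$) together with the supplementary clause in (b) (which is designed to handle the case $0\in K$).

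For $(a)\Rightarrow(b)$, restricting condition (iii) of RCPLD$_S$ to subsets $K\subset I(\bar x,\bar y)$ delivers RCPLD$_\Gamma$ immediately. For the supplementary clause, given $\lambda\in\Lambda(\bar x,\bar y)$ for which the pair of families at $\{0\}\cup I_+(\bar x,\bar y,\lambda)$ and $S$ is positive-linearly dependent, I would instantiate condition (iii) of RCPLD$_S$ at $K:=\{0\}\cup I_+(\bar x,\bar y,\lambda)$ to obtain the required linear dependence on $U$ verbatim.

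For $(b)\Rightarrow(a)$, I would verify (iii) of RCPLD$_S$ by cases on $K\subset\{0\}\cup I(\bar x,\bar y)$. If $0\notin K$, RCPLD$_\Gamma$ applies directly. If $0\in K$, write $K=\{0\}\cup K'$ with $K'\subset I(\bar x,\bar y)$ and inspect the coefficient $\alpha_0\geq 0$ of $\nabla_yh_0(\bar x,\bar y)$ in any witnessing positive combination. In the subcase $\alpha_0=0$, the remaining coefficients exhibit a positive-linear dependence of $\bigl((\nabla_yh_i(\bar x,\bar y))_{i\in K'},(\nabla_yh_i(\bar x,\bar y))_{i\in S}\bigr)$, so RCPLD$_\Gamma$ gives linear dependence of $(\nabla_yh_i(x,y))_{i\in K'\cup S}$ on $U$, and padding with a zero coefficient on $i=0$ yields linear dependence of $(\nabla_yh_i(x,y))_{i\in K\cup S}$. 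In the subcase $\alpha_0>0$, dividing through by $\alpha_0$ and extending by zeros on $I\setminus K'$ and $J\setminus S$ produces a vector $\lambda\in\R^p$ satisfying the Lagrangian identity defining $\Lambda(\bar x,\bar y)$, with nonnegativity and complementarity inherited from $K'\subset I(\bar x,\bar y)$; in particular $I_+(\bar x,\bar y,\lambda)\subset K'$. The same $\lambda$ then witnesses positive-linear dependence of the pair at $\{0\}\cup I_+(\bar x,\bar y,\lambda)$ and $S$ (the coefficient $1$ on $\nabla_yh_0$ is nonzero), whence the hypothesis in (b) yields linear dependence of $(\nabla_yh_i(x,y))_{i\in\{0\}\cup I_+(\bar x,\bar y,\lambda)\cup S}$ on $U$; padding with zeros on $K'\setminus I_+(\bar x,\bar y,\lambda)$ promotes this to linear dependence of $(\nabla_yh_i(x,y))_{i\in K\cup S}$ on $U$, as required.

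The argument is essentially definition chasing, so the only genuine obstacle is the bookkeeping in the last subcase: one must verify that the normalized, zero-extended coefficient vector genuinely belongs to $\Lambda(\bar x,\bar y)$, that $I_+(\bar x,\bar y,\lambda)\subset K'$, and that padding with zeros preserves the linear-dependence relation across the relevant index sets. Once the correspondence between positive combinations of $\nabla_yh_0,\nabla_yh_1,\ldots,\nabla_yh_p$ with a strictly positive coefficient on $\nabla_yh_0$ and elements of $\Lambda(\bar x,\bar y)$ is set up cleanly, the matching of the RCPLD clauses is routine.
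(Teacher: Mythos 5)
Your proposal is correct and takes essentially the same route as the paper: both directions are handled by definition chasing, with the key step in (b)$\Rightarrow$(a) being the case $0\in K$, where one normalizes the witnessing positive combination by the coefficient of $\nabla_yh_0(\bar x,\bar y)$, extends by zeros to obtain a multiplier in $\Lambda(\bar x,\bar y)$ whose support recovers the relevant index set, and then invokes the supplementary clause before padding back up to $K\cup S$. The only difference is presentational: you treat the subcase $\alpha_0=0$ explicitly, which the paper absorbs into a ``w.l.o.g.'' reduction.
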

\begin{proof}
	The implication (a)$\Longrightarrow$(b) is clear by definition of RCPLD$_S$.
	Thus, let us assume that the conditions in (b) hold. 
	Particularly, due to $(\bar x,\bar y)\in\gph S$ and validity of RCPLD$_\Gamma$,
	we find $\Lambda(\bar x,\bar y)\neq\varnothing$.
	Fix an arbitrary index set $\tilde K\subset \{0\}\cup I(\bar x,\bar y)$ such that
	the pair of families
	\[
		\Bigl(
			(\nabla_yh_i(\bar x,\bar y))_{i\in\tilde K},
			(\nabla_yh_i(\bar x,\bar y))_{i\in S}
		\Bigr)
	\]
	is positive-linearly dependent. In case where $\tilde K\subset I(\bar x,\bar y)$ holds,
	the vectors from the family $(\nabla_yh_i(x,y))_{i\in \tilde K\cup S}$ are linearly 
	dependent for each $(x,y)\in U$ by validity of RCPLD$_\Gamma$. 
	Thus, we assume $0\in\tilde K$.
	Then w.l.o.g.\ we find $K\subset I(\bar x,\bar y)$ with $\{0\}\cup K\subset\tilde K$ 
	as well as $\lambda_i > 0$ ($i\in \{0\}\cup K$) and
	$\lambda_i\in\R$ ($i\in S$) satisfying
	\[
		\sum\limits_{i\in\{0\}\cup K\cup S}\lambda_i\nabla_yh_i(\bar x,\bar y)=0.
	\]
	Division by $\lambda_0$ yields
	\[
		\nabla_yh_0(\bar x,\bar y)
		+\sum\limits_{i\in K\cup S}(\lambda_i/\lambda_0)\nabla_yh_i(\bar x,\bar y)
		=0.
	\]
	Defining $\tilde\lambda\in\Lambda(\bar x,\bar y)$ by
	\[
		\forall i\in\{1,\ldots,p\}\colon\quad
		\tilde\lambda_i:=
			\begin{cases}
				\lambda_i/\lambda_0	&	i\in K\cup S,\\
				0					&	\text{otherwise},
			\end{cases}
	\]
	we find $K=I_+(\bar x,\bar y,\tilde\lambda)$. 
	Thus, the family $(\nabla_yh_i(x,y))_{i\in\{0\}\cup K\cup S}$ is linearly dependent
	for each $(x,y)\in U$. 
	Due to $\{0\}\cup K\subset\tilde K$, the family $(\nabla_yh_i(x,y))_{i\in\tilde K\cup S}$
	is linearly dependent as well.
	Consequently, RCPLD$_S$ is valid at $(\bar x,\bar y)$.
\end{proof}
Whenever LICQ holds at $(\bar x,\bar y)\in\gph S$ w.r.t.\ the inequality and equality constraints
in $\Gamma$, 
the criterion from \cref{lem:char_RCPLD_S} is notably easy to check since the associated 
Lagrange multiplier in $\Lambda(\bar x,\bar y)$ is uniquely determined while RCPLD$_\Gamma$ holds
trivially.
We depict this with the aid of the subsequently stated example.
\begin{example}\label{ex:verify_RCPLD_S}
	Let us consider the parametric optimization problem
	\[
		\min\limits_y\{(y_1+1)^2+(y_2-x)^2\,|\,y_1\geq 0,\,y_2\geq 0\}.
	\]
	For later use, we set $h_1(x,y):=-y_1$ and $h_2(x,y):=-y_2$ for all $x\in\R$ and $y\in\R^2$.
	Clearly, the constraint system satisfies LICQ at each feasible point.
	We easily find $S(x)=\{(0,\max(x,0))\}$ for each $x\in\R$ as well as
	$\Lambda(x,y)=\{(2,\max(-2x,0))\}$ for each $(x,y)\in\gph S$.\\	
	Consider $\bar x\geq 0$. In this case, we find $I_+(\bar x,\bar y,\lambda)=\{1\}$ for  
	$\bar y\in S(\bar x)$ and the associated Lagrange multiplier $\lambda\in\Lambda(\bar x,\bar y)$.
	While the vectors in
	\[
		\left(\begin{pmatrix}
			2(\bar y_1+1)\\2(\bar y_2-\bar x)
		\end{pmatrix},
		\begin{pmatrix}
		-1\\0
		\end{pmatrix}
		\right)
	\]
	are positive-linearly dependent due to $\bar y_2=\bar x$, 
	a slight perturbation of $\bar x$ makes this family linearly
	independent which is why RCPLD$_S$ fails to hold at $(\bar x,\bar y)$ in this case.\\	
	Now, fix $\bar x<0$. Here, we have $I_+(\bar x,\bar y,\lambda)=\{1,2\}$ for $\bar y\in S(\bar x)$
	and the associated Lagrange multiplier $\lambda\in\Lambda(\bar x,\bar y)$.
	Noting that any strict subfamily of
	\[
		\left(\begin{pmatrix}
			2(\bar y_1+1)\\2(\bar y_2-\bar x)
		\end{pmatrix},
		\begin{pmatrix}
		-1\\0
		\end{pmatrix},
		\begin{pmatrix}
		0\\-1
		\end{pmatrix}
		\right)
	\]
	is linearly independent while any three vectors in $\R^2$ are linearly dependent, RCPLD$_S$ holds
	at $(\bar x,\bar y)$ in this case.
\end{example}

The subsequent remark comments on a way which allows a slight generalization of
\cref{thm:R_regularity_of_solution_mapping}.
\begin{remark}\label{rem:R_regularity_of_solution_mapping}
	Let $S$ be R-regular at some point $(\bar x,\bar y)\in\gph S$ w.r.t.\ $\dom S$.
	Inspecting the proof of \cite[Theorem~5.1]{BednarczukMinchenkoRutkowski2019},
	one only needs local Lipschitz continuity of all data functions at $(\bar x,\bar y)$ 
	w.r.t.\ the set $\dom S\times\R^m$ in order to infer validity of the Aubin property
	of $S$ at $(\bar x,\bar y)$ w.r.t.\ $\dom S$.
	
	Thus, the assertions of \cref{thm:R_regularity_of_solution_mapping} 
	remain true
	if all stated assumptions and assertions are stated w.r.t\ $\dom\Gamma$ since
	this is enough to ensure local coincidence of $\dom S$ and $\dom\Gamma$. 
	Particularly, relying on the respective second statement of 
	\cref{lem:continuity_of_varphi} and
	\cref{cor:local_Lipschitz_continuity_of_varphi_via_RCPLD_restricted},
	the requirement on $\varphi$ to be continuous at $\bar x$ can be removed 
	from the assumptions which need to be postulated in the 
	counterpart associated with the first statement of 
	\cref{thm:R_regularity_of_solution_mapping}.
\end{remark}

Keeping \cref{lem:inner_semicontinuity_via_R_regularity} and 
\cref{rem:R_regularity_of_solution_mapping} in mind,
the following corollary is a direct consequence of 
\cref{thm:R_regularity_of_solution_mapping}.
Indeed, this is not surprising in the light of \cref{cor:lower_semicontinuity_of_Gamma}.
\begin{corollary}\label{cor:lower_semicontinuity_of_solution_map}
	Fix a point $\bar x\in\dom\Gamma$. 
	Let \hyperref[ass:A1mod]{\textup{\textbf{(A1')}}} and
	\hyperref[ass:A2]{\textup{\textbf{(A2)}}} hold.
	Furthermore, let RCPLD$_{S}$ w.r.t.\ $\dom\Gamma$
	hold at each point from $\{\bar x\}\times S(\bar x)$.
	Then $S$ is lower semicontinuous at $\bar x$ w.r.t.\ $\dom S$.
\end{corollary}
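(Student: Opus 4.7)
The plan is to proceed exactly in parallel to the proof of \cref{cor:lower_semicontinuity_of_Gamma} (the corresponding statement for $\Gamma$), by combining the modified form of \cref{thm:R_regularity_of_solution_mapping}(a) indicated in \cref{rem:R_regularity_of_solution_mapping} with \cref{lem:inner_semicontinuity_via_R_regularity} and the characterization of lower semicontinuity in terms of pointwise inner semicontinuity recalled in \cref{sec:set_valued_analysis}.

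First, I would apply the $\dom\Gamma$-relativized form of \cref{thm:R_regularity_of_solution_mapping}(a) provided by \cref{rem:R_regularity_of_solution_mapping}. Under \hyperref[ass:A1mod]{\textup{\textbf{(A1')}}}, \hyperref[ass:A2]{\textup{\textbf{(A2)}}}, and RCPLD$_S$ w.r.t.\ $\dom\Gamma$ at each point of $\{\bar x\}\times S(\bar x)$, the continuity-of-$\varphi$ hypothesis appearing in the original statement of \cref{thm:R_regularity_of_solution_mapping}(a) can be dropped, because the first statement of \cref{cor:local_Lipschitz_continuity_of_varphi_via_RCPLD_restricted} already supplies local Lipschitz continuity of $\varphi$ at $\bar x$ w.r.t.\ $\dom\Gamma$ (note that \hyperref[ass:A1mod]{\textup{\textbf{(A1')}}} entails \hyperref[ass:A1]{\textup{\textbf{(A1)}}} and RCPLD$_S$ entails RCPLD$_\Gamma$ at any point of $\gph S$). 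This ensures that $h_0=f-\varphi$ is continuous w.r.t.\ $\dom\Gamma\times\R^m$ in a neighborhood of every point of $\{\bar x\}\times S(\bar x)$, so that the framework of \cref{sec:sufficient_conditions_for_R_regularity} applies to the representation \eqref{eq:representation_of_S} of $S$ via \cref{rem:some_more_discontinuities}, and we obtain R-regularity of $S$ at each point of $\{\bar x\}\times S(\bar x)$ w.r.t.\ $\dom\Gamma$.

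Next, I would observe that $\dom S$ and $\dom\Gamma$ coincide locally around $\bar x$: by \hyperref[ass:A2]{\textup{\textbf{(A2)}}} and closedness of the images of $\Gamma$ (which follows from continuity of the $h_i(x,\cdot)$), for $x\in\dom\Gamma$ sufficiently close to $\bar x$ the set $\Gamma(x)$ is nonempty, closed, and bounded, so Weierstrass' theorem yields $S(x)\neq\varnothing$. Consequently the R-regularity just obtained is equivalent to R-regularity w.r.t.\ $\dom S$ at every point of $\{\bar x\}\times S(\bar x)$. Applying \cref{lem:inner_semicontinuity_via_R_regularity} pointwise then gives inner semicontinuity of $S$ at each $(\bar x,y)\in\{\bar x\}\times S(\bar x)$ w.r.t.\ $\dom S$, and invoking the equivalence between pointwise inner semicontinuity on $\{\bar x\}\times S(\bar x)$ and lower semicontinuity of $S$ at $\bar x$ (recalled in \cref{sec:set_valued_analysis}) yields the claim. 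No genuine obstacle arises; the only care required is to keep the relativizations to $\dom\Gamma$ and $\dom S$ consistent throughout the chain of implications.
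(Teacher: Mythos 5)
Your overall route is the one the paper intends: pass to the $\dom\Gamma$-relativized, continuity-free version of \cref{thm:R_regularity_of_solution_mapping}(a) described in \cref{rem:R_regularity_of_solution_mapping}, observe that $\dom S$ and $\dom\Gamma$ coincide locally around $\bar x$, apply \cref{lem:inner_semicontinuity_via_R_regularity} at each point of $\{\bar x\}\times S(\bar x)$, and conclude via the equivalence between lower semicontinuity and pointwise inner semicontinuity. Those last three steps are fine and match the paper.

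The gap is in how you discharge the continuity-of-$\varphi$ hypothesis. You invoke the \emph{first} statement of \cref{cor:local_Lipschitz_continuity_of_varphi_via_RCPLD_restricted}, which requires RCPLD$_\Gamma$ w.r.t.\ $\dom\Gamma$ at \emph{each point of} $\{\bar x\}\times\Gamma(\bar x)$. The hypotheses of the corollary only give RCPLD$_S$ at each point of $\{\bar x\}\times S(\bar x)$, and by \cref{lem:char_RCPLD_S} this yields RCPLD$_\Gamma$ only at points of $\{\bar x\}\times S(\bar x)$; nothing is assumed, and nothing follows, at points of $\{\bar x\}\times\bigl(\Gamma(\bar x)\setminus S(\bar x)\bigr)$, which is in general nonempty (indeed typically the larger part of $\{\bar x\}\times\Gamma(\bar x)$). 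Unlike MFCQ under \hyperref[ass:A1]{\textup{\textbf{(A1)}}}, where validity at one point propagates to all of $\Gamma(\bar x)$ via Slater's condition, RCPLD does not propagate from $S(\bar x)$ to $\Gamma(\bar x)$, so statement (a) of \cref{cor:local_Lipschitz_continuity_of_varphi_via_RCPLD_restricted} is simply not applicable here. \Cref{rem:R_regularity_of_solution_mapping} instead directs you to the respective \emph{second} statements of \cref{lem:continuity_of_varphi} and \cref{cor:local_Lipschitz_continuity_of_varphi_via_RCPLD_restricted} --- the variants driven by inner semicontinuity at a single point --- and the continuity of $\varphi$ at $\bar x$ w.r.t.\ $\dom\Gamma$ (needed so that \cref{rem:some_more_discontinuities} applies to $h_0=f-\varphi$ in the representation \eqref{eq:representation_of_S}) has to be obtained along those lines rather than from a blanket RCPLD$_\Gamma$ assumption on $\Gamma(\bar x)$ that you do not have. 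As written, this step of your argument does not go through.
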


\subsection{Bilevel optimization}\label{sec:bilevel_optimization}

Let us now consider the bilevel optimization problem
\begin{equation}\label{eq:BPP}\tag{BPP}
	\text{``}\min\limits_x\text{''}\{F(x,y)\,|\,x\in X,\,y\in S(x)\}
\end{equation}
where $F\colon\R^n\times\R^m\to\R$ is a continuously differentiable mapping, 
$X\subset\R^n$ is a closed set, and $S\colon\R^n\tto\R^m$ is the solution mapping 
associated with \eqref{eq:parametric_optimization_problem}.
The model \eqref{eq:BPP} dates back to \cite{Stackelberg1934} 
where it has been stated first
in the context of economical game theory. The quotation marks in
\eqref{eq:BPP} emphasize that this problem is not necessarily well-determined.
Indeed, whenever there is some $x\in X\cap\dom S$ where $S(x)$ is not
a singleton, then the decision maker in \eqref{eq:BPP} cannot determine the
associated objective value and, thus, classical minimization is not applicable.
In order to avoid this shortcoming, one often replaces \eqref{eq:BPP}
by its so-called \emph{optimistic} or \emph{pessimistic} version
which are given by
\[	
	\min\limits_x\{\varphi_o(x)\,|\,x\in X\}
	\qquad
	\text{and}
	\qquad
	\min\limits_x\{\varphi_p(x)\,|\,x\in X\},
\]
respectively, where the functions 
$\varphi_o,\varphi_p\colon\R^n\to\overline{\R}$
are defined as follows:
\[
	\forall x\in\R^n\colon\quad
	\varphi_o(x):=\inf\limits_y\{F(x,y)\,|\,y\in S(x)\},
	\qquad
	\varphi_p(x):=\sup\limits_y\{F(x,y)\,|\,y\in S(x)\}.
\]
This way, the optimistic and pessimistic reformulation of \eqref{eq:BPP}
reflect a cooperative behavior and a worst-case scenario between the
decision makers in \eqref{eq:BPP} and \eqref{eq:parametric_optimization_problem},
respectively. 

Due to numerous underlying applications, e.g., from finance, chemistry, or logistics, 
bilevel optimization is one of the hot topics in mathematical programming. On the
other hand, \eqref{eq:BPP} is an inherently difficult problem. Besides the above
observation that it might not be well-defined, it suffers from inherent non-convexity,
irregularity, and the implicit character of its feasible set. That is why numerous
publications dealing with the derivation of problem-tailored optimality conditions,
constraint qualifications, and solution algorithms appeared during the last three
decades. We refer the interested reader to the monographs
\cite{Bard1998,Dempe2002,DempeKalashnikovPerezValdesKalashnykova2015} for a detailed
introduction to bilevel optimization.

Let us take a look back at the optimistic and pessimistic version of
\eqref{eq:BPP} first.
Under not too restrictive assumptions, the solution mapping $S$ is upper
semicontinuous, and this property implies lower semicontinuity of $\varphi_o$,
i.e., in case where $X$ is compact, the optimistic version of \eqref{eq:BPP} is
likely to possess a global minimizer. On the other hand, in order to guarantee
lower semicontinuity of $\varphi_p$, one has to assume that $S$ is lower
semicontinuous w.r.t.\ $\dom S$. This is quite a restrictive assumption,
but our result from \cref{cor:lower_semicontinuity_of_solution_map} depicts that it 
can be valid in particular problem settings. In this regard, the subsequent theorem
follows from our aforementioned result and \cite[Theorem~5.3]{Dempe2002}.
\begin{theorem}\label{thm:existence_of_pessimistic_solutions}
	Let \hyperref[ass:A1mod]{\textup{\textbf{(A1')}}} hold.
	Furthermore, assume that $X\subset\dom \Gamma$ holds true and
	that $\Gamma$ is locally bounded at each point from $X$.
	Additionally, let RCPLD$_{S}$ w.r.t.\ $\dom\Gamma$ hold
	at each point from $\gph S\cap(X\times\R^m)$.
	Finally, let $X$ be nonempty and compact.
	Then there exists a pessimistic solution of \eqref{eq:BPP}.
\end{theorem}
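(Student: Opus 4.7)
The plan is to deduce existence of a pessimistic minimizer from the lower semicontinuity of $\varphi_p$ on the compact set $X$ via the Weierstrass theorem. The essential tool is \cref{cor:lower_semicontinuity_of_solution_map}, which produces lower semicontinuity of the solution mapping $S$ under exactly the hypotheses postulated here.

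First, I would verify the hypotheses of \cref{cor:lower_semicontinuity_of_solution_map} at every $\bar x\in X$. From $X\subset\dom\Gamma$, the set $\Gamma(\bar x)$ is nonempty; it is closed by continuity of $h_1,\ldots,h_p$ and bounded by the local boundedness of $\Gamma$ at $\bar x$, so Weierstrass applied to the continuous function $f(\bar x,\cdot)$ yields $S(\bar x)\neq\varnothing$. In particular, $X\subset\dom S$. Assumption \hyperref[ass:A1mod]{\textup{\textbf{(A1')}}} is given globally, \hyperref[ass:A2]{\textup{\textbf{(A2)}}} at $\bar x$ follows from the local boundedness hypothesis, and RCPLD$_S$ w.r.t.\ $\dom\Gamma$ at each point of $\{\bar x\}\times S(\bar x)$ is covered by the assumption on $\gph S\cap(X\times\R^m)$. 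Hence the corollary yields that $S$ is lower semicontinuous at $\bar x$ w.r.t.\ $\dom S$.

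Next, I would transfer this into lower semicontinuity of $\varphi_p$ on $X$. Fix $\bar x\in X$ and a sequence $\{x^k\}_{k\in\N}\subset X$ with $x^k\to\bar x$. Since $X\subset\dom S$, the lower semicontinuity of $S$ at $\bar x$ w.r.t.\ $\dom S$ is equivalent to inner semicontinuity at every point of $\{\bar x\}\times S(\bar x)$ w.r.t.\ $\dom S$. Thus, for each $\bar y\in S(\bar x)$ there is a sequence $\{y^k\}_{k\in\N}$ with $y^k\in S(x^k)$ for sufficiently large $k\in\N$ and $y^k\to\bar y$. Continuity of $F$ then gives
\[
	\liminf_{k\to\infty}\varphi_p(x^k)\,\geq\,\liminf_{k\to\infty}F(x^k,y^k)\,=\,F(\bar x,\bar y),
\]
and taking the supremum over $\bar y\in S(\bar x)$ yields $\liminf_k\varphi_p(x^k)\geq\varphi_p(\bar x)$, i.e., $\varphi_p$ is lower semicontinuous at $\bar x$.

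Finally, a finite-subcover argument combined with the local boundedness of $\Gamma$ on the compact set $X$ produces a single bounded set that contains $\Gamma(x)$, hence $S(x)$, for all $x\in X$. By continuity of $F$, $\varphi_p$ is therefore finite-valued on $X$, and the Weierstrass theorem applied to the lower semicontinuous function $\varphi_p$ on the nonempty compact set $X$ furnishes the desired pessimistic solution. The main conceptual step is the supremum-lower-semicontinuity calculation, but it is routine once the inner semicontinuity of $S$ is available; the only subtlety is ensuring compatibility of the \emph{``w.r.t.\ $\dom S$''} qualifier with the approximating sequences, which is immediate from $X\subset\dom S$.
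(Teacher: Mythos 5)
Your proposal is correct and follows essentially the same route as the paper: the paper derives the result from \cref{cor:lower_semicontinuity_of_solution_map} together with a citation of \cite[Theorem~5.3]{Dempe2002}, and your argument simply spells out what that cited theorem provides (lower semicontinuity of $\varphi_p$ from inner semicontinuity of $S$, plus Weierstrass on the compact set $X$). The verification of the corollary's hypotheses and the handling of the ``w.r.t.\ $\dom S$'' qualifier via $X\subset\dom S$ are both accurate.
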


The crucial requirement in the above theorem obviously is the validity of
RCPLD$_{S}$ w.r.t.\ $\dom\Gamma$ at each point from $\gph S\cap(X\times\R^m)$,
see \cref{lem:char_RCPLD_S} and the subsequent comments for some discussion.
However, let us note that this is inherent for lower level problems of type
\begin{equation}\label{eq:special_lower_level}
	\min\limits_y\bigl\{c^\top y\,|\,By\leq b(x)\bigr\}
\end{equation}
where $c\in\R^m$ and $B\in\R^{\ell\times m}$
are matrices while $b\colon\R^n\to\R^\ell$ is a continuous function. 
This means that \eqref{eq:BPP} with the special
lower level problem \eqref{eq:special_lower_level} is likely to
possess a pessimistic solution.

Observing that the optimistic and pessimistic version of \eqref{eq:BPP}
might be interpreted as a three-level decision process, the derivation of
optimality conditions via these models is quite challenging, see e.g.\
\cite{DempeMordukhovichZemkoho2012,DempeMordukhovichZemkoho2013}.
In the literature, it is a common approach to consider
\begin{equation}\label{eq:BPP_opt}\tag{BPP$'$}
	\min\limits_{x,y}\{F(x,y)\,|\,x\in X,\,y\in S(x)\}
\end{equation}
instead. This well-defined optimization problem is closely related to
the optimistic version of \eqref{eq:BPP}, 
see \cite[Proposition~6.9]{DempeMordukhovichZemkoho2012} for details.
Furthermore, by definition of the optimal value function, one can
easily check that \eqref{eq:BPP_opt} is fully equivalent to the
single-level optimization problem
\begin{equation}\label{eq:OVR}\tag{OVR}
	\min\limits_{x,y}\{F(x,y)\,|\,x\in X,\,f(x,y)-\varphi(x)\leq 0,\,y\in\Gamma(x)\}
\end{equation}
which is commonly referred to as the \emph{optimal value reformulation} or
\emph{value function transformation} of \eqref{eq:BPP_opt}.
Although this problem is still quite challenging due to the implicit character of
$\varphi$, the general non-smoothness of $\varphi$, and its inherent
irregularity, it has been exploited intensively for the derivation of necessary optimality
conditions and solution algorithms, see e.g.\
\cite{DempeDuttaMordukhovich2007,DempeFranke2015,DempeFranke2016,DempeZemkoho2013,FischerZemkohoZhou2019,MordukhovichNamPhan2012,YeZhu1995,YeZhu2010} and the references therein.
The key idea in all these papers is to use a partial penalization argument in order
to shift the crucial constraint $f(x,y)-\varphi(x)\leq 0$ from the feasible set of
\eqref{eq:OVR} to its objective function. Whenever this penalization is locally exact,
this approach is reasonable in theory and numerical practice. Following \cite{YeZhu1995},
we refer to this property as \emph{partial calmness}.
\begin{definition}\label{def:partial_calmness}
	Let $(\bar x,\bar y)\in\R^n\times\R^m$ be a locally optimal solution of
	\eqref{eq:BPP_opt}.
	We say that this program is \emph{partially calm} at $(\bar x,\bar y)$ if
	there exist a neighborhood $U$ of $(\bar x,\bar y,0)$ and some
	constant $\kappa>0$ such that we have
	$F(x,y)-F(\bar x,\bar y)+\kappa|u|\geq 0$ for each triplet
	$(x,y,u)\in U$ which satisfies
	\[
		x\in X,\quad f(x,y)-\varphi(x)\leq u,\quad y\in\Gamma(x).
	\]
\end{definition}

Indeed, \cite[Proposition~3.3]{YeZhu1995} shows that \eqref{eq:BPP_opt} is partially
calm at one of its local minimizers $(\bar x,\bar y)$ if and only if there is some
$\kappa>0$ such that $(\bar x,\bar y)$ is a local minimizer of
\[
	\min\limits_{x,y}\{F(x,y)+\tilde\kappa(f(x,y)-\varphi(x))\,|\,x\in X,\,y\in\Gamma(x)\}
\]
for each $\tilde\kappa\geq\kappa$. Noting that the latter optimization problem may satisfy standard
constraint qualifications, the presence of partial calmness indeed opens a way
to the derivation of necessary optimality conditions for \eqref{eq:BPP_opt}
since the potential non-smoothness of $\varphi$ now can be simply handled with
suitable subdifferential constructions from variational analysis.

In \cite[Section~3]{MehlitzMinchenkoZemkoho2020}, the authors provide an overview of
conditions which are sufficient for the presence of partial calmness in bilevel
optimization. Our particular interest here lies in a result which can be distilled
from \cite[Lemmas~3.2 and 3.3]{MehlitzMinchenkoZemkoho2020}.
\begin{proposition}\label{prop:partial_calmness_via_R-regularity}
	Let $(\bar x,\bar y)\in\R^n\times\R^m$ be a local minimizer of
	\eqref{eq:BPP_opt} such that $S$ is R-regular at $(\bar x,\bar y)$
	w.r.t.\ $\dom S$.
	Furthermore, assume that the sets $\dom\Gamma$ and $\dom S$ coincide
	locally around $\bar x$. Then \eqref{eq:BPP_opt} is partially calm
	at $(\bar x,\bar y)$.
\end{proposition}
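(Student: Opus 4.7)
The plan is to use R-regularity of $S$ to round any perturbed feasible triple $(x,y,u)$ to a truly feasible point $(x,\tilde y)\in\gph S$ whose distance from $(x,y)$ is controlled linearly by $u$, and then to combine this with local optimality of $(\bar x,\bar y)$ and Lipschitz continuity of $F$ in order to produce the calmness estimate.

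First I would fix a triple $(x,y,u)$ in a small neighborhood of $(\bar x,\bar y,0)$ satisfying $x\in X$, $y\in\Gamma(x)$, and $f(x,y)-\varphi(x)\leq u$. Since $y\in\Gamma(x)$ forces $\varphi(x)\leq f(x,y)$, we necessarily have $u\geq 0$, so $|u|=u$. I would then invoke R-regularity of $S$ at $(\bar x,\bar y)$ w.r.t.\ $\dom S$ applied to the representation \eqref{eq:representation_of_S}: there exist $\kappa>0$ and a neighborhood $V$ of $(\bar x,\bar y)$ such that
\[
	\dist(y,S(x))
	\leq
	\kappa\,\max\bigl\{0,h_0(x,y),\max\{h_i(x,y)\,|\,i\in I\},\max\{|h_i(x,y)|\,|\,i\in J\}\bigr\}
\]
for every $(x,y)\in V$ with $x\in\dom S$. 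The hypothesis that $\dom\Gamma$ and $\dom S$ coincide locally around $\bar x$ guarantees $x\in\dom S$ whenever $y\in\Gamma(x)$ for $x$ near $\bar x$, so the bound is applicable. Since $y\in\Gamma(x)$ makes every $h_i$ ($i\in I\cup J$) term nonpositive (or zero), and $h_0(x,y)\leq u$, the right-hand side collapses to $\kappa u$.

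With this bound in hand, $S(x)$ being closed allows me to pick $\tilde y\in\Pi(y,S(x))$, so $\|y-\tilde y\|\leq\kappa u$. For $(x,y,u)$ close enough to $(\bar x,\bar y,0)$, the estimate $\|\tilde y-\bar y\|\leq\kappa u+\|y-\bar y\|$ is small, hence $(x,\tilde y)$ is feasible for \eqref{eq:BPP_opt} and lies in the neighborhood where $(\bar x,\bar y)$ is a local minimizer, yielding $F(\bar x,\bar y)\leq F(x,\tilde y)$. Continuous differentiability of $F$ delivers a local Lipschitz modulus $L>0$, which produces
\[
	F(x,y)-F(\bar x,\bar y)+L\kappa|u|
	\geq
	F(x,y)-F(x,\tilde y)+L\kappa u
	\geq
	-L\kappa\,\|y-\tilde y\|+L\kappa u
	\geq
	0,
\]
i.e., partial calmness with constant $L\kappa$.

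The main obstacle I anticipate is the bookkeeping in the second step: cleanly observing that the constraint residual for the system defining $S$ reduces to the single contribution $h_0(x,y)$ (and thereby to $u$) whenever $y\in\Gamma(x)$, and verifying that $\tilde y$ remains inside the local optimality neighborhood of $\bar y$ after the projection step. Once these reductions are in place, the passage to partial calmness is essentially a one-line Lipschitz estimate.
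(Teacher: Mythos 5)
Your argument is correct and is essentially the self-contained version of the route the paper outsources to \cite[Lemmas~3.2 and 3.3]{MehlitzMinchenkoZemkoho2020}: the collapsed R-regularity estimate $\dist(y,S(x))\le\kappa\,(f(x,y)-\varphi(x))$ for $y\in\Gamma(x)$ is precisely the local uniformly weak sharp minimum property, and the projection-plus-Lipschitz step is the standard passage from that property to partial calmness. Only a constant is garbled in your final display: the Lipschitz estimate gives $F(x,y)-F(x,\tilde y)\ge -L\norm{y-\tilde y}\ge -L\kappa u$ (not $-L\kappa\norm{y-\tilde y}$), which still closes the chain with calmness constant $L\kappa$.
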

We would like to point out that a related result can be found in \cite[Theorem~6.1]{BednarczukMinchenkoRutkowski2019}.

	As mentioned in \cite[Lemma~3.3]{MehlitzMinchenkoZemkoho2020}, the assumptions of
	\cref{prop:partial_calmness_via_R-regularity} actually imply that the
	point $(\bar x,\bar y)$ corresponds to a so-called (local) \emph{unifomly weak sharp minimum}
	of the parametric optimization problem \eqref{eq:parametric_optimization_problem},
	and the latter guarantees partial calmness of \eqref{eq:BPP_opt} at $(\bar x,\bar y)$,
	see \cite[Proposition~5.1]{YeZhu1995} as well.
	However, while the presence of a uniformly weak sharp minimum is generally hard
	to verify by definition, the assumptions of \cref{prop:partial_calmness_via_R-regularity}
	can be established, e.g., using the results of \cref{sec:parametric_optimization} and, thus,
	in terms of initial data.

Consequently, we may apply 
\cref{thm:R_regularity_of_solution_mapping}
as well as \cref{rem:R_regularity_of_solution_mapping} in
order to infer new sufficient conditions for the validity of partial calmness.
\begin{theorem}\label{thm:partial_calmness_via_RCPLD}
	Let $(\bar x,\bar y)\in\R^n\times\R^m$ be a local minimizer of \eqref{eq:BPP_opt}.
	Additionally, let one of the following additional conditions hold.
	\begin{enumerate}
		\item[(a)] Let \hyperref[ass:A1mod]{\textup{\textbf{(A1')}}} and
		 	\hyperref[ass:A2]{\textup{\textbf{(A2)}}} be valid.
			Furthermore, let RCPLD$_{S}$ w.r.t.\ $\dom\Gamma$
			hold at each point from $\{\bar x\}\times S(\bar x)$.
		\item[(b)] Let $S$ be inner semicontinuous at $(\bar x,\bar y)$ w.r.t.\ $\dom\Gamma$
			and let RCPLD$_{S}$ w.r.t.\ $\dom\Gamma$ hold at this point.
	\end{enumerate}
	Then \eqref{eq:BPP_opt} is partially calm at $(\bar x,\bar y)$.
\end{theorem}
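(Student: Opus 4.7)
The plan is to reduce the theorem to \cref{prop:partial_calmness_via_R-regularity}, which for a local minimizer $(\bar x,\bar y)$ of \eqref{eq:BPP_opt} asks for two ingredients: R-regularity of $S$ at $(\bar x,\bar y)$ w.r.t.\ $\dom S$, and the local coincidence of $\dom\Gamma$ and $\dom S$ around $\bar x$. The bulk of the work has already been absorbed into \cref{thm:R_regularity_of_solution_mapping} and \cref{rem:R_regularity_of_solution_mapping}; what remains is to verify the two ingredients in each of the two regimes (a) and (b).

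To obtain R-regularity, in case (a) I would invoke \cref{thm:R_regularity_of_solution_mapping}(a) read through \cref{rem:R_regularity_of_solution_mapping}, which allows us to drop the continuity assumption on $\varphi$ as long as all conclusions are interpreted w.r.t.\ $\dom\Gamma$. The hypotheses \hyperref[ass:A1mod]{\textup{\textbf{(A1')}}}, \hyperref[ass:A2]{\textup{\textbf{(A2)}}}, and validity of RCPLD$_S$ w.r.t.\ $\dom\Gamma$ at every point of $\{\bar x\}\times S(\bar x)$ then furnish R-regularity of $S$ at $(\bar x,\bar y)$ w.r.t.\ $\dom\Gamma$. In case (b), \cref{thm:R_regularity_of_solution_mapping}(b), similarly adapted via \cref{rem:R_regularity_of_solution_mapping}, directly delivers R-regularity at $(\bar x,\bar y)$ w.r.t.\ $\dom\Gamma$. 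Since $\dom S\subset\dom\Gamma$ trivially, the estimate \eqref{eq:R_regularity} applied to $\Omega:=\dom\Gamma$ specializes to an estimate with $\Omega:=\dom S$, so R-regularity w.r.t.\ $\dom\Gamma$ implies R-regularity w.r.t.\ $\dom S$.

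For the local coincidence of $\dom\Gamma$ and $\dom S$ around $\bar x$, the nontrivial inclusion is $\dom\Gamma\subset\dom S$ locally. Under (a), \hyperref[ass:A2]{\textup{\textbf{(A2)}}} provides a neighborhood $U$ of $\bar x$ on which the images of $\Gamma$ are uniformly bounded, and continuity of $h_1,\ldots,h_p$ makes them closed; Weierstrass' theorem combined with continuity of $f$ then yields $S(x)\neq\varnothing$ for every $x\in U\cap\dom\Gamma$, so $U\cap\dom\Gamma\subset\dom S$. Under (b), a contradiction argument using inner semicontinuity of $S$ at $(\bar x,\bar y)$ w.r.t.\ $\dom\Gamma$ rules out a sequence $\{x^k\}\subset\dom\Gamma\setminus\dom S$ converging to $\bar x$, yielding a neighborhood $V$ of $\bar x$ with $V\cap\dom\Gamma\subset\dom S$.

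With both ingredients in hand, \cref{prop:partial_calmness_via_R-regularity} is applicable at $(\bar x,\bar y)$ and yields partial calmness of \eqref{eq:BPP_opt} at this point. The only mildly delicate step is the bookkeeping around the domain sets, making sure that the R-regularity produced by our new theorems (which is always stated w.r.t.\ $\dom\Gamma$) can legitimately be fed into \cref{prop:partial_calmness_via_R-regularity} (which refers to $\dom S$); this is precisely what the local coincidence argument above secures.
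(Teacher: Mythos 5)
Your proposal is correct and follows essentially the same route the paper intends: the paper derives this theorem directly from \cref{thm:R_regularity_of_solution_mapping} combined with \cref{rem:R_regularity_of_solution_mapping} (to obtain R-regularity of $S$ w.r.t.\ $\dom\Gamma$ without the continuity assumption on $\varphi$) and then feeds the result into \cref{prop:partial_calmness_via_R-regularity}. Your explicit verification of the local coincidence of $\dom\Gamma$ and $\dom S$ and of the passage from R-regularity w.r.t.\ $\dom\Gamma$ to R-regularity w.r.t.\ $\dom S$ simply spells out the bookkeeping the paper leaves implicit, and it is accurate.
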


As we already observed above, the crucial assumption RCPLD$_{S}$ is generally valid
for lower level problems of type \eqref{eq:special_lower_level} which is why the local
minimizers of the associated bilevel optimization problem \eqref{eq:BPP_opt} are always
partially calm. This observation already has been made in \cite[Theorem~4.1]{MehlitzMinchenkoZemkoho2020}
and \cite[Lemma~2.1]{MinchenkoBerezhov2017}. However, we would like to point out
that our result from \cref{thm:partial_calmness_via_RCPLD} may address far more general situations
as demonstrated with the aid of the subsequent example.

\begin{example}\label{ex:nonlinear_bilevel_programming_and_partial_calmness}
	Let us consider the bilevel optimization problem
	\begin{equation}\label{eq:example_BPP}
		\min\limits_{x,y}\{(x-3/4)^2+y^2\,|\,y\in S(x)\}
	\end{equation}
	where $S\colon\R\tto\R$ is the solution mapping of the parametric optimization problem
	\[
		\min\limits_y\{(x+y-2)^2\,|\,y^2-x\leq 0,\,y\geq 0\}.
	\]
	Some computations show
	\[
		\forall x\in\R\colon\quad
		S(x)=
		\begin{cases}
			\varnothing	& x\in(-\infty,0),\\
			\{\sqrt x\}	& x\in[0,1],\\
			\{2-x\}		& x\in[1,2],\\
			\{0\}		& x\in[2,\infty),
		\end{cases}
		\quad
		\varphi(x)=
		\begin{cases}
			+\infty			& x\in(-\infty,0),\\
			(x+\sqrt x-2)^2	& x\in[0,1],\\
			0				& x\in[1,2],\\
			(x-2)^2			& x\in[2,\infty).
		\end{cases}
	\]
	We observe that $S$ is a single-valued and continuous map w.r.t.\ its domain.
	Particularly, it is inner semicontinuous w.r.t.\ $\dom S$ at each point of its
	graph. Furthermore, $\dom S=\dom\Gamma$ holds.
	Using the above formula for $S$, one can easily check that \eqref{eq:example_BPP} 
	possesses the uniquely determined global minimizer $(\bar x,\bar y):=(1/4,1/2)$
	while there is another local minimizer at $(\tilde x,\tilde y):=(11/8,5/8)$.
	
	We observe that each subsystem of the family $(2(x+y-2),2y)$ possesses constant rank around
	the reference point $(\bar x,\bar y)$, and this is sufficient for the validity of RCPLD$_{S}$
	at $(\bar x,\bar y)$, i.e., \eqref{eq:example_BPP} is partially calm at this
	point by \cref{thm:partial_calmness_via_RCPLD}.
	
	Next, we consider the point $(\tilde x,\tilde y)$. Here, the set of lower level active constraints
	is empty and the gradient of the lower level objective function vanishes but, clearly, does not
	generally vanish in a neighborhood of $(\tilde x,\tilde y)$. Thus, RCPLD$_{S}$ is violated at
	$(\tilde x,\tilde y)$, i.e., we cannot employ \cref{thm:partial_calmness_via_RCPLD} in order
	to infer partial calmness of \eqref{eq:example_BPP} at $(\tilde x,\tilde y)$.
	However, one can easily check that, for each $\kappa>0$, $(\tilde x,\tilde y)$ is not a
	local minimizer of
	\[
		\min\limits_{x,y}\{(x-3/4)^2+y^2+\kappa((x+y-2)^2-\varphi(x))\,|\,y^2-x\leq 0,\,y\geq 0\}
	\]
	(note that, locally around $(\tilde x,\tilde y)$, this is a convex problem) which is why
	\eqref{eq:example_BPP} is actually not partially calm at $(\tilde x,\tilde y)$.
\end{example}

\section{Conclusions}\label{sec:conclusions}

In this manuscript, we have shown that the validity of the constraint qualification RCPLD is
sufficient to infer the presence of R-regularity for set-valued mappings of 
type \eqref{eq:definition_of_Gamma}. Our results generalize similar considerations which
exploit the constraint qualifications MFCQ or RCRCQ for that purpose, see
\cite{BednarczukMinchenkoRutkowski2019,LudererMinchenkoSatsura2002,MinchenkoStakhovski2011b}.
We applied our findings in order to study nonlinear parametric optimization problems
and bilevel optimization problems.
First, we inferred new criteria ensuring Lipschitz continuity of optimal value functions
as well as R-regularity and lower semicontinuity of solution mappings in parametric
programming. As we have seen, a similar analysis w.r.t.\ the solution mapping is not
possible under MFCQ.
Second, these results were exploited in order to state a criterion for
the existence of solutions in pessimistic bilevel optimization as well as a sufficient
condition for the validity of the partial calmness property in optimistic bilevel
optimization. 
Throughout the manuscript, simple examples illustrated applicability but also the limits of our
findings.



\end{document}